\newcommand{\C}{\mathbb{C}}
\newcommand{\N}{\mathbb{N}}
\newcommand{\R}{\mathbb{R}}
\newcommand{\T}{\mathbb{T}}
\newcommand{\Z}{\mathbb{Z}}
\newcommand{\boE}{\mathcal{E}}
\newcommand{\boI}{\mathcal{I}}
\newcommand{\boJ}{\mathcal{J}}
\newcommand{\boK}{\mathcal{K}}
\newcommand{\boL}{\mathcal{L}}
\newcommand{\boR}{\mathcal{R}}
\newcommand{\gR}{\mathfrak{R}}
\newcommand{\gc}{\mathfrak{c}}
\newcommand{\gp}{\mathfrak{p}}
\newcommand{\gu}{\mathfrak{u}}
\newtheorem{case}{Case}
\newtheorem{cor}{Corollary}
\newtheorem{lem}[cor]{Lemma}
\newtheorem{prop}[cor]{Proposition}
\newtheorem{thm}{Theorem}
\newtheorem{step}{Step}
\newtheorem*{thm*}{Theorem}
\newtheorem*{def*}{Definition}
\theoremstyle{definition}
\newtheorem*{merci}{Acknowledgments}
\theoremstyle{remark}
\begin{document}

\title{Construction of minimizing travelling waves for the Gross-Pitaevskii equation on $\R \times \T$}
\author{
\renewcommand{\thefootnote}{\arabic{footnote}}
Andr\'e de Laire\footnotemark[1],~Philippe Gravejat\footnotemark[2]~ and Didier Smets\footnotemark[3]}
\footnotetext[1]{
Univ.\ Lille, CNRS, Inria, UMR 8524 - Laboratoire Paul Painlev\'e, Inria, F-59000 Lille, France.
E-mail: {\tt andre.de-laire@univ-lille.fr}}
\footnotetext[2]{CY Cergy Paris Universit\'e, Laboratoire Analyse, G\'eom\'etrie, Mod\'elisation (UMR CNRS 8088), F-95302 Cergy-Pontoise, France. E-mail: {\tt philippe.gravejat@cyu.fr}}
\footnotetext[3]{Sorbonne Universit\'e, Laboratoire Jacques-Louis Lions (UMR CNRS 7598), F-75005 Paris, France. E-mail: {\tt didier.smets@sorbonne-universite.fr}}
\date{}
\maketitle

\begin{abstract}
As a sequel to our previous analysis in~\cite{deLGrSm1} on the Gross-Pitaevskii equation on the product space $\R \times \T$, we construct a branch of finite energy travelling waves as minimizers of the Ginzburg-Landau energy at fixed momentum. We deduce that minimizers are precisely the planar dark solitons when the length of the transverse direction is less than a critical value, and that they are genuinely two-dimensional solutions otherwise. The proof of the existence of minimizers is based on the compactness of minimizing sequences, relying on a new symmetrization argument that is well-suited to the periodic setting.
\end{abstract}

{\it Keywords:} Defocusing Schr\"odinger equation, Gross-Pitaevskii equation, travelling waves, planar dark solitons, nonzero conditions at infinity, concentration-compactness.

{2010 \it{Mathematics Subject Classification}:} 35Q55; 35J20; 35C07; 37K05; 35C08; 35A01, 37K40

\section{Introduction}

In this paper, we continue the study started in~\cite{deLGrSm1}, concerning the travelling wave solutions to the Gross-Pitaevskii equation
\begin{equation}
\label{eq:gp}
i \partial_t \Psi + \Delta \Psi + \Psi \big( 1 - |\Psi|^2\big) = 0
\end{equation}
on the product space $\R \times \T_\ell$, where $\T_\ell := \R / \ell \Z$ is the torus with fixed positive length $\ell$. In physics, this defocusing Schr\"odinger equation is a classical model for Bose-Einstein condensates, superfluidity, and nonlinear optical fibers~\cite{KevFrCa0, KivsLut1}. 

Let us recall that, in one space dimension, the Gross-Pitaevskii equation possesses a family of finite energy travelling waves, called dark solitons. They are given by the explicit formula
\begin{equation}
\label{eq:gu-val}
\gu_c(x) = \sqrt{\frac{2 - c^2}{2}} \tanh \bigg( \frac{\sqrt{2 - c^2}}{2} \, x \bigg) + i \frac{c}{\sqrt{2}},
\end{equation}
for any speed $|c| < \sqrt{2}$. These solitons extend trivially to the product space $\R \times \T_\ell$, where they are referred to as planar (or line) dark solitons. However, it is well-known in the physics literature that these planar solitons can be unstable due to the tendency to develop distortions in their transverse profile. We refer to~\cite{KevFrCa0, KuznTur1, GaidAng1} for details, and to~\cite{RousTzv3} for some rigorous results. In addition, experimental observations have shown that the dynamics of planar dark solitons are stable when they are sufficiently confined in the transverse direction, but unstable otherwise. In the latter case, the creation of vortices can occur (see~\cite{KivsPel1, HoefIla1} and the references therein).

In the sequel, we present a rigorous framework for studying this kind of phenomenon. Precisely, our goal is to prove the existence of non constant finite energy travelling wave solutions to~\eqref{eq:gp}, obtained as minimizers of the energy at fixed momentum. Taking into account the results in~\cite{deLGrSm1}, we will deduce that these minimizers are exactly the planar dark solitons when $\ell$ is less than a critical value, and that they are genuinely two-dimensional solutions otherwise. In particular, planar solitons do not minimize the energy in the presence of a large transverse direction.

To introduce our framework, we recall that it was proved in~\cite{BetGrSa2, BeGrSaS1} that the dark solitons
\eqref{eq:gu-val} are solutions to the constrained minimization problem 
\begin{equation}
\label{def:gI}
\boI_\textup{1d}(\gp) := \inf \big\{ E(\psi), \psi \in H_{\rm loc}^1(\R) \text{ s.t. } [P](\psi) = \gp \big\},
\end{equation}
for fixed $\gp \in \R / \pi \Z.$ Here the Ginzburg-Landau energy $E$ is defined as
$$
E(\psi) := \int_\R e(\psi) := \int_\R \bigg( \frac{1}{2} |\nabla \psi|^2 + \frac{1}{4} \big( 1 - |\psi|^2 \big)^2 \bigg),
$$
and $[P]$ is the (untwisted) momentum given by
$$
[P](\psi) := \frac{1}{2} \lim_{R \to + \infty} \bigg( \int_{- R}^{R} \langle i \partial_x \psi, \psi \rangle_\C + \textup{arg} \big( \psi(R) \big) - \textup{arg} \big( \psi(- R) \big) \bigg) \quad \text{ modulo } \pi. 
$$
The speed $c = c_\gp$ of the dark soliton $\gu_c$ is the Lagrange multiplier of this problem. It is uniquely determined by the identity $[P](\gu_{c_\gp}) = \gp$. The momentum $[P]$ is well-defined on the energy space
\begin{equation}
\label{def:X-space1d}
X(\R) := \big\{ \psi \in H_\text{loc}^1(\R) : \nabla \psi \in L^2(\R) \text{ and } 1 - |\psi|^2 \in L^2(\R) \big\}.
\end{equation}
This claim was proved in~\cite{BeGrSaS1}, together with the fact that the definition of $[P]$ only makes sense modulo $\pi.$ 

We now turn our attention to the corresponding minimization problem on the product space $\R \times \T_\ell$. We normalize the Ginzburg-Landau energy as
\begin{equation}
\label{def:E-energy}
E(\psi) := \frac{1}{\ell}\int_{\R} \int_{\T_\ell} e(\psi),
\end{equation}
so that functions, which only depend on the horizontal variable, have the same energy values in one space and in two space dimensions. Given a number $\gp \in \R / \pi \Z,$ we set
\begin{equation}
\label{def:I-l}
\boI_\textup{2d}(\gp) := \inf \big\{ E(\psi) : \psi \in X(\R \times \T_\ell) \text{ with } [P](\psi) = \gp \big\}.
\end{equation}
Here the natural energy space is defined as above by
$$
X(\R \times \T_\ell) = \big\{\psi \in H_\text{loc}^1(\R \times \T_\ell) : \nabla \psi \in L^2(\R \times \T_\ell) \text{ and } 1 - |\psi|^2 \in L^2(\R \times \T_\ell) \big\}.
$$
The (untwisted) momentum $[P]$ requires some proper definition. For that purpose, we decompose a function $\psi \in X(\R \times \T_\ell)$ as
\begin{equation}
\label{eq:decomp1d}
\psi(x,y) = \widehat{\psi}(x) + w(x,y),
\end{equation}
where
$$
\widehat{\psi}(x) := \frac{1}{\ell}\int_{\T_\ell} \psi(x,y) \, dy.
$$
We recall from~\cite{deLGrSm1} that $\widehat{\psi} \in X(\R)$, and $w \in H^1(\R \times \T_\ell)$, so that we may define
\begin{equation}
\label{eq:defP2D}
[P](\psi) := [P](\widehat{\psi}) + \frac{1}{2\ell} \int_\R \int_{\T_\ell} \langle i \partial_x w, w \rangle_\C \quad \text{ modulo } \pi.
\end{equation}
Note that here also, if $\psi$ only depends on the horizontal variable, the versions of $[P]$ in one and two space dimensions coincide, which justifies our slight abuse of notation.

In~\cite{deLGrSm1}, we established that the problem $\boI_\textup{2d}(\gp)$ is achieved by one-dimensional minimizers, the planar dark solitons, when $\ell$ is sufficiently small, but that no minimizer can be one dimensional if $\ell$ is sufficiently large. In the present paper, we prove that minimizers do exist for all possible values of $\ell$ and $\gp$.

\begin{thm}
\label{thm:exist-min}
Let $\ell > 0 $ and $\gp \in \R / \pi \Z$. The minimization problem $\boI_{\textup{2d}}(\gp)$ is achieved by some function $U_{\gp, \ell} \in X(\R \times \T_\ell)$. Moreover, $U_{\gp, \ell}$ is smooth on $\R \times \T_\ell$, and there exists a number $c_{\gp, \ell} \in \R$ such that
\begin{equation}
\label{eq:Psi_p}
i c_{\gp, \ell} \partial_x U_{\gp, \ell} + \Delta U_{\gp, \ell} + U_{\gp, \ell} \, \big( 1 - |U_{\gp, \ell}|^2 \big) = 0.
\end{equation}
\end{thm}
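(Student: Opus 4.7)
The plan is to apply the direct method of the calculus of variations to a minimizing sequence $(\psi_n) \subset X(\R \times \T_\ell)$ with $[P](\psi_n) = \gp$ and $E(\psi_n) \to \boI_{\textup{2d}}(\gp)$. First I would verify that $\boI_{\textup{2d}}(\gp)$ is finite and strictly positive for $\gp \ne 0 \bmod \pi$ (by comparison with the one-dimensional problem $\boI_{\textup{1d}}(\gp)$ and the planar soliton $\gu_{c_\gp}$, whose energy serves as an upper bound, and by the fact that the only functions with $E = 0$ are of modulus one and hence have zero momentum). This yields the a priori bounds $\|\nabla \psi_n\|_{L^2} \le C$ and $\|1 - |\psi_n|^2\|_{L^2} \le C$, which via the decomposition $\psi_n = \widehat{\psi}_n + w_n$ give uniform control of $\widehat{\psi}_n$ in $X(\R)$ and of $w_n$ in $H^1(\R \times \T_\ell)$.

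The core step is to defeat the translation invariance in the $x$ variable and prevent splitting of mass. This is where the new symmetrization referred to in the abstract enters: I would modify each $\psi_n$ into a symmetrized function $\psi_n^*$ that preserves the energy $E(\psi_n^*) \le E(\psi_n)$ and the momentum $[P](\psi_n^*) = [P](\psi_n) \bmod \pi$, while concentrating $e(\psi_n^*)$ near the hyperplane $\{x = 0\}$. A natural candidate is a Steiner-type rearrangement applied slice-wise in $x$, designed so as to interact compatibly with the periodic factor $\T_\ell$ and with the $\widehat{\psi}_n + w_n$ decomposition. Combined with the a priori bounds, this localization should provide enough compactness to extract a subsequence converging weakly in $H^1_{\textup{loc}}$, strongly in $L^2_{\textup{loc}}$, and in $L^\infty_{\textup{loc}}$ (via Sobolev embedding in the $2\textup{d}$ setting after mollification and bootstrap) to some limit $U_{\gp,\ell} \in X(\R \times \T_\ell)$.

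The delicate point is to pass to the limit in the constraint $[P](\psi_n^*) = \gp$. Since $[P]$ is only defined modulo $\pi$ and mixes an integral of $\langle i \partial_x w, w\rangle_\C$ with the asymptotic argument of $\widehat{\psi}$ at $\pm\infty$, I would combine weak lower semi-continuity of $E$, the strong convergence of $w_n^*$ in $L^2$ furnished by the symmetrization-induced decay, and the convergence of $\widehat{\psi}_n^*$ in $X(\R)$ to conclude that $[P](U_{\gp,\ell}) = \gp$ and $E(U_{\gp,\ell}) \le \boI_{\textup{2d}}(\gp)$, so that $U_{\gp,\ell}$ achieves the infimum.

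Finally, the Euler-Lagrange equation \eqref{eq:Psi_p} arises from the Lagrange multiplier principle applied to smooth, compactly supported perturbations: such perturbations do not affect the boundary arguments at infinity in the definition of $[P]$, so the variation of $[P]$ reduces to the standard momentum density $\langle i \partial_x U_{\gp,\ell}, \cdot \rangle_\C$, which produces a real multiplier $c_{\gp,\ell}$. Smoothness of $U_{\gp,\ell}$ then follows from a standard elliptic bootstrap on the semilinear equation \eqref{eq:Psi_p}. The hardest part will undoubtedly be designing the symmetrization and proving that it both decreases the energy and conserves the momentum on $\R \times \T_\ell$; once this is in place, ruling out vanishing and dichotomy and the remaining compactness argument should follow a familiar pattern, but the periodic transverse direction makes rearrangements based on radial symmetry inapplicable and requires the tailored construction announced by the authors.
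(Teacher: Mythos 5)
There is a genuine gap, and it sits exactly where you yourself locate the ``hardest part''. You propose, as the core step, a Steiner-type rearrangement in the $x$ variable that simultaneously decreases the Ginzburg--Landau energy, preserves the momentum $[P]$ modulo $\pi$, and concentrates the energy density near $\{x=0\}$. No such rearrangement is known, and there is a structural obstruction to one existing: $[P]$ is a signed, phase-sensitive quantity (an integral of $\langle i\partial_x w, w\rangle_\C$ plus asymptotic arguments of $\widehat\psi$), and rearrangements of $|\psi|$ or of the energy density destroy precisely the phase information that $[P]$ records. Moreover, even if you could localize the energy density near $x=0$, this would not by itself prevent the momentum from splitting between the local profile and a piece escaping to infinity; excluding that dichotomy requires the strict sub-additivity of $\gp\mapsto \boI_{\textup{2d}}(\gp)$, which you never address and which cannot be dismissed as ``a familiar pattern'' --- it is the main new difficulty of the problem.

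You have also misread the role of the symmetrization announced in the abstract. The paper's ``slicing and mirroring'' argument acts in the \emph{periodic transverse variable} $y$, not in $x$, and it is not used to localize minimizing sequences at all. Its purpose is to prove that $p\mapsto\boI_{\textup{2d}}(p)$ satisfies the midpoint concavity condition $\tfrac12(\boI_{\textup{2d}}(p+\delta)+\boI_{\textup{2d}}(p-\delta))\le\boI_{\textup{2d}}(p)$: one selects a dyadic subinterval $I$ of $\T_\ell$ on which the averaged slice momentum equals $[p+\delta]$, reflects the restriction of $\psi_n$ to $\R\times I$ periodically to build competitors with momenta close to $[p\pm\delta]$, and compares energies. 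Concavity plus the endpoint behaviour $\boI_{\textup{2d}}(p)\sim\sqrt2\,p$ as $p\to0^+$ together with $\boI_{\textup{2d}}(p)<\sqrt2\,p$ then yield strict sub-additivity. The compactness itself is obtained by the standard concentration-compactness scheme (vanishing is excluded via the bound $|[P](\psi_n)|\le E(\psi_n)/(\sqrt2\,\delta_n)+o(1)$ for vanishing sequences; dichotomy is excluded by a surgery/cut-and-paste construction of a remainder $\widetilde\psi_{n_m}$ and the strict sub-additivity). Your outline omits both of these essential inputs, so as written the proof cannot be completed.
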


Combining with~\cite[Theorem 1]{deLGrSm1}, we conclude that, for a given $\gp \in \R / \pi \Z$, there is a critical length $\ell_\gp > 0$, such that, if $\ell \in (0, \ell_\gp)$, then $U_{\gp, \ell}$ is a planar soliton, up to a translation and a phase shift. On the other hand, if $\ell\in (\ell_\gp, \infty)$, then $U_{\gp, \ell}$ cannot solely depend only on the $x$ variable, and therefore cannot be a planar soliton.

Existence of travelling waves for the Gross-Pitaevskii equation has attracted a lot of efforts in the case of the whole plane $\R^2$. In~\cite{BethSau1}, the existence of non-constant finite energy travelling waves with arbitrary small speeds $c$ was proved using some mountain pass argument. The associated momentum behaves like $p \simeq \log(1/c)$ in the limit $c \to 0$. In~\cite{BetGrSa1}, the previous result was extended to the case of travelling waves with an arbitrary value for the momentum $p$. These solutions are also minimizers of the energy at fixed momentum, but they were obtained as local limits of minimizers on expanding tori. Existence and compactness for minimizing sequences directly on $\R^2$ was proved in~\cite{ChirMar1}, together with the stability of the minimizing set. In these last two works, little information is given regarding the speed of the travelling waves, beyond the fact that they are subsonic, i.e.\ $|c| < \sqrt{2}$. The same limitation holds in Theorem~\ref{thm:exist-min} above. In~\cite{BellRui1}, existence of travelling waves in $\R^2$ for almost every value of $|c| < \sqrt{2}$ was proved, using a mountain pass approach combined with a monotonicity argument. The question of existence for the full range of speeds $|c| < \sqrt{2}$ remains open, both in the case of $\R^2$ and of $\R \times \T_\ell.$ In contrast, for the torus $\T_\ell \times \T_\ell$ it is known from~\cite{MaSaRui1} that travelling waves exist with arbitrary speeds. Only these with speed $|c| < \sqrt{2}$ could possibly converge to finite energy travelling waves in $\R^2$ by a limiting procedure.

In the papers quoted so far, as well as in Theorem~\ref{thm:exist-min}, the uniqueness of minimizers (up to the geometric invariances) is not tackled. In the case of $\R^2$, this was recently proved in~\cite{ChirPac3} when the speed $c$ is small enough, using a delicate perturbative argument.

We finally mention that stationary solutions of the Gross-Pitaevskii equation in the strip $\R \times [-\ell, \ell]$ have recently been constructed in~\cite{AftaSan1}. It is tempting to relate them to the ones obtained in Theorem~\ref{thm:exist-min} for the special case $\gp = \frac{\pi}{2}$ modulo $\pi$. For small values of $\ell$, these are the one-dimensional black soliton, which is stationary, while for large values of $\ell$, they are truly two-dimensional.

In Section~\ref{sec:sketch}, we present the main ideas for the proof of Theorem~\ref{thm:exist-min} as well as the statements of the intermediate results. The proofs for the latter are then given in the remaining sections.

{\it Notations:} In many places, we shall write $\gp \in \R / \pi \Z$ as $\gp = [p]$ for some representative value $p \in \R$. Whenever a function $f$ is defined on $\R / \pi \Z$, we often identify it with a $\pi$-periodic function on $\R$, and write $f(p)$ in place of $f([p])$. We also use the notation $|\gp|$ to refer to the distance between $\gp$ and $[0]$ in $\R / \pi\Z$.

\section{Sketch of the proof of Theorem~\ref{thm:exist-min}}
\label{sec:sketch}

Our goal is to establish some form of compactness for minimizing sequences of the problem $\boI_\textup{2d}(\gp)$. For that purpose, we rely on a concentration-compactness type argument in the locally compact case. In Subsection~\ref{sub:energy}, we analyze the function $\gp \mapsto \boI_{\textup{2d}}(\gp)$ and in particular prove its strict sub-additivity. For that purpose, we introduce a slicing and mirroring argument, which we believe is original and well-adapted to the periodic setting. In Subsection~\ref{sub:conc-comp}, we finally obtain compactness by excluding first vanishing and then dichotomy for minimizing sequences.

\subsection{Properties of the minimizing energy}
\label{sub:energy}

The first step is to analyze precisely the properties of the minimizing energy $\boI_\textup{2d}(\gp)$, in particular its possible strict sub-additivity with respect to $\gp$. In~\cite[Lemma 2]{deLGrSm1}, we already derived some preliminary properties of this minimizing energy, which we now recall for the sake of completeness.

\begin{lem}
\label{lem:prop-I-l}
Let $\ell > 0$. The function $\boI_\textup{2d}$ is an even and Lipschitz continuous function on $\R / \pi \Z$, with Lipschitz constant at most $\sqrt{2}$, and it is bounded by
\begin{equation}
\label{eq:estim-sup-I}
\boI_\textup{2d}(\gp) \leq \boI_\textup{1d}(\gp) < \sqrt{2} \, |\gp|,
\end{equation}
for any $\gp \in \R / \pi \Z$ (with $\gp \neq [0]$ in the last inequality).
\end{lem}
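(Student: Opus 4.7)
The plan is to verify in turn the evenness, the comparison $\boI_\textup{2d}\le\boI_\textup{1d}$, the strict upper bound via the explicit one-dimensional soliton, and finally the Lipschitz estimate, which is the most delicate of the four.

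Evenness of $\boI_\textup{2d}$ follows from complex conjugation: for any $\psi\in X(\R\times\T_\ell)$, the function $\bar\psi$ also lies in $X(\R\times\T_\ell)$, has the same Ginzburg--Landau energy (which depends on $\psi$ only through $|\nabla\psi|$ and $|\psi|^2$), and satisfies $[P](\bar\psi)=-[P](\psi)$, as one checks directly from \eqref{eq:defP2D} after noting that $\widehat{\bar\psi}=\overline{\widehat\psi}$ and that $\langle i\partial_x\bar w,\bar w\rangle_\C=-\langle i\partial_x w,w\rangle_\C$. The bound $\boI_\textup{2d}(\gp)\le\boI_\textup{1d}(\gp)$ is then obtained by extending any $\psi\in X(\R)$ trivially to $\R\times\T_\ell$ via $\psi(x,y):=\psi(x)$, which preserves both the energy (thanks to the normalisation \eqref{def:E-energy}) and the momentum (since the fluctuation $w$ in \eqref{eq:decomp1d} vanishes).

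For the strict inequality $\boI_\textup{1d}(\gp)<\sqrt 2\,|\gp|$ I would rely on the explicit dark soliton family~\eqref{eq:gu-val}. A direct computation yields $E(\gu_c)=(2-c^2)^{3/2}/3$, and unwinding the definition of $[P]$ gives $[P](\gu_c)=p(c)=\arccos(c/\sqrt 2)-c\sqrt{2-c^2}/2$. As $c$ decreases from $\sqrt 2$ to $-\sqrt 2$, the map $c\mapsto p(c)$ is strictly decreasing in derivative (since $p'(c)=-\sqrt{2-c^2}$), covers $(0,\pi)$, and thus surjects onto $\R/\pi\Z\setminus\{[0]\}$. Combined with the Hamiltonian relation $dE/dp=c$, this gives $E(\gu_c)=\int_0^{p(c)} c(s)\,ds<\sqrt 2\,p(c)$ for $|c|<\sqrt 2$, and the identification $\boI_\textup{1d}([P](\gu_c))=E(\gu_c)$ from \cite{BetGrSa2,BeGrSaS1} closes the argument.

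The Lipschitz continuity would follow from the subadditivity
\begin{equation}\label{eq:subadd-planned}
\boI_\textup{2d}(\gp_1+\gp_2)\le\boI_\textup{2d}(\gp_1)+\boI_\textup{1d}(\gp_2),
\end{equation}
which I would establish by a gluing argument. Given $\delta>0$ and a near-minimizer $\psi_1\in X(\R\times\T_\ell)$ of $\boI_\textup{2d}(\gp_1)$, together with a one-dimensional soliton $\gu_c$ satisfying $[P](\gu_c)=\gp_2$, one translates $\gu_c$ far to the right and concatenates with $\psi_1$ using a smooth cut-off in a matching region where both functions are close to unit-modulus constants (after a suitable phase rotation aligning their asymptotic phases). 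Because $1-|\psi_1|^2\in L^2$ and the fluctuation $w$ of $\psi_1$ belongs to $H^1$, the cross terms in $E$ vanish as the translation parameter $R\to+\infty$, while the additivity $[P](\psi)\equiv\gp_1+\gp_2\pmod\pi$ is read off \eqref{eq:defP2D} once the phases have been aligned. Sending $R\to\infty$ and $\delta\to 0$ produces \eqref{eq:subadd-planned}; combining this with the strict bound $\boI_\textup{1d}(\gp_2)<\sqrt 2\,|\gp_2|$ and swapping the roles of $\gp_1,\gp_2$ yields $|\boI_\textup{2d}(\gp_1)-\boI_\textup{2d}(\gp_2)|\le\sqrt 2\,|\gp_1-\gp_2|$.

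The main obstacle is the gluing step. Since $[P]$ is defined only modulo $\pi$ and through the decomposition \eqref{eq:decomp1d}, one must check that the phase rotation chosen to align the asymptotic values of $\psi_1$ and $\gu_c$ does not corrupt the decomposition in an uncontrolled way, and that the cut-off region contributes $o(1)$ both to the energy and to the momentum (rather than to some nontrivial multiple of $\pi$). Making this precise uses the $L^2$-decay of $1-|\widehat{\psi_1}|^2$ at infinity together with the global $H^1$-control on $w$, which together ensure that the matching zone can be located where every contribution is genuinely small.
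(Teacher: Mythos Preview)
Your argument is correct in outline, but it follows a genuinely different route from the paper's. The paper does not prove Lemma~\ref{lem:prop-I-l} directly at all: it introduces the rescaling $\psi_\ell(x,y)=\psi(x,\ell y)$, which identifies $\boI_\textup{2d}$ on $\R\times\T_\ell$ with the minimizing energy $\boJ_\lambda$ on $\R\times\T_1$ for $\lambda=1/\ell$, and then simply cites \cite[Lemma~2]{deLGrSm1} for all four properties of $\boJ_\lambda$. In other words, the paper's proof is a one-line reduction to the prequel. Your approach, by contrast, is self-contained: the evenness via conjugation and the comparison $\boI_\textup{2d}\le\boI_\textup{1d}$ via trivial extension are immediate, and your derivation of $\boI_\textup{1d}(\gp)<\sqrt 2\,|\gp|$ through the explicit solitons and the Hamiltonian relation $dE/dp=c$ is precisely the computation recorded in Lemma~\ref{lem:prop-gI} of Appendix~\ref{sec:min-1d}. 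For the Lipschitz bound, your sub-additivity-plus-gluing strategy is sound and the technical issues you flag (phase alignment, control of the transition zone in both $E$ and $[P]$ modulo~$\pi$) are the right ones; a fully explicit realization of exactly this type of interpolation---finding a low-energy slice by pigeonhole, lifting to polar coordinates there, and interpolating the modulus and phase affinely---appears later in the paper as Steps~\ref{S1}--\ref{S3} of the proof of Lemma~\ref{lem:vaud}, so you could borrow that construction verbatim rather than rely on a generic cut-off. The trade-off is clear: the paper's route is shorter but opaque to a reader without~\cite{deLGrSm1}, while yours exposes the mechanism.
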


The main properties of the minimizing energy $\boI_\textup{2d}$ can be summarized as follows.

\begin{prop}
\label{prop:prop-I-l}
Let $\ell > 0$. The map $p \mapsto \boI_\textup{2d}(p)$ is concave on $[0, \pi]$. Moreover, the function $\boI_\textup{2d}$ is strictly sub-additive on $\R / \pi \Z$, in the sense that
\begin{equation}
\label{eq:str-sub-add}
\boI_\textup{2d}(\gp_1 + \gp_2) < \boI_\textup{2d}(\gp_1) + \boI_\textup{2d}(\gp_2),
\end{equation}
for any $(\gp_1, \gp_2) \in (\R / \pi \Z)^2$ such that $\gp_1 \neq [0]$ and $\gp_2 \neq [0]$.
\end{prop}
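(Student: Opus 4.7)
The plan is to prove concavity of $p\mapsto\boI_\textup{2d}(p)$ on $[0,\pi]$ and derive strict sub-additivity from concavity combined with the strict bound $\boI_\textup{2d}(\gp)<\sqrt 2|\gp|$ of Lemma~\ref{lem:prop-I-l} and the evenness of $\boI_\textup{2d}$.

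For the concavity, I would use the slicing-and-mirroring construction announced in the introduction. The idea is to start from a near-minimizer $\psi$ of $\boI_\textup{2d}(p)$, to slice the cylinder $\R\times\T_\ell$ along a vertical plane $\{x=a\}$, and, on each side, to build a new test function $\psi_a^\pm\in X(\R\times\T_\ell)$ by reflecting the corresponding half of $\psi$ across $\{x=a\}$. A pure mirror reflection yields functions of vanishing momentum; to produce a prescribed nonzero target momentum, one augments the reflection with a phase rotation, smoothly inserted over a transition zone of width $\delta$. The additional energy carried by this phase rotation is of order $1/\delta$ and hence negligible as $\delta\to\infty$, while the total phase jump precisely controls $[P](\psi_a^\pm)$ through the boundary term in the definition~\eqref{eq:defP2D}. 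Since by construction $E(\psi_a^-)+E(\psi_a^+)=2E(\psi)+o(1)$, tuning the phase parameters so that $[P](\psi_a^\pm)=p_{1,2}$ gives the midpoint inequality
$$
\boI_\textup{2d}(p_1)+\boI_\textup{2d}(p_2)\leq 2\boI_\textup{2d}\!\left(\tfrac{p_1+p_2}{2}\right),
$$
which, combined with the Lipschitz continuity of $\boI_\textup{2d}$ (Lemma~\ref{lem:prop-I-l}), upgrades to full concavity on $[0,\pi]$.

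For the strict sub-additivity, concavity with $\boI_\textup{2d}(0)=0$ yields sub-additivity on $[0,\pi]$. Strictness follows from combining concavity with the strict bound $\boI_\textup{2d}(p)<\sqrt 2 p$: concavity forces $p\mapsto\boI_\textup{2d}(p)/p$ to be non-increasing on $(0,\pi]$, and the strict inequality $\boI_\textup{2d}(p)<\sqrt{2}p$ together with the asymptotic $\lim_{p\to0^+}\boI_\textup{2d}(p)/p=\sqrt{2}$ (which uses the one-dimensional comparison $\boI_\textup{2d}\leq\boI_\textup{1d}$ and the explicit behavior of $\boI_\textup{1d}$ near $0$) rule out any flat piece of this slope, giving strictness in the non-wrapping regime $p_1+p_2\leq\pi$. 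The cases where $\gp_1+\gp_2$ wraps around in $\R/\pi\Z$ are handled via the evenness $\boI_\textup{2d}(-\gp)=\boI_\textup{2d}(\gp)$, by replacing $\gp_j$ with $-\gp_j$ whenever needed to reduce to the previous situation. The degenerate case $\gp_1+\gp_2=[0]$ is settled by the positivity $\boI_\textup{2d}(\gp)>0$ for $\gp\neq[0]$, which itself follows from the definition of $\boI_\textup{2d}$ (any minimizing sequence with vanishing energy concentrates on a constant of modulus one, forcing its momentum to zero).

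The main obstacle I anticipate is the phase-modulated mirroring. The challenge is threefold: first, one must design the phase insertion so that the modulation is globally consistent on the periodic transverse direction $\T_\ell$ (which constrains the admissible phases); second, the energy overhead of the phase rotation must vanish in an appropriate limit while the phase jump itself does not; and third, the construction must cover the full range of possible target momenta $p_1+p_2\approx 2p$, which requires a careful two-parameter tuning of the slicing location $a$ and the phase magnitude. Overcoming these difficulties simultaneously is where the novelty of the slicing-and-mirroring argument resides, and where the periodic structure of $\T_\ell$ plays a crucial role.
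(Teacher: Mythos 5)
Your overall architecture --- a local midpoint inequality giving concavity, then a Lemma~\ref{lem:strict-subadditivity}-type dichotomy combined with the behaviour of $\boI_\textup{2d}$ at $0^+$ and parity/periodicity --- matches the paper, but two of your key steps do not hold as described. First, the slicing must be performed in the transverse variable $y$, not across a vertical plane $\{x=a\}$. Under the reflection $x\mapsto 2a-x$ the momentum density $\langle i\partial_x\psi,\psi\rangle_\C$ changes sign, so each of your mirror-symmetric functions $\psi_a^{\pm}$ carries untwisted momentum $[0]$ rather than a tunable value. The phase rotation cannot repair this: replacing $\psi$ by $e^{i\theta(x)}\psi$, with $\theta$ increasing by $\Delta\theta$ across a transition zone, changes $[P]$ only by $\frac12\int\theta'(1-|\psi|^2)$, because the gain $\frac12\Delta\theta$ in the boundary term of~\eqref{eq:defP2D} is cancelled by the shift $-\frac12\int\theta'|\psi|^2$ in the bulk term. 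In any zone where $|\psi|\approx 1$ (which is where the energy overhead $\frac12\int|\psi|^2(\theta')^2$ is negligible) this correction vanishes: the untwisted momentum is essentially gauge-invariant by design, which is precisely why it is only defined modulo $\pi$. The paper instead slices and mirrors in $y$, where reflection preserves the momentum density; a dyadic decomposition of $\T_\ell$ into strips $\R\times J_k$, mirror-symmetrized with period $2h_*$, yields periodic test functions with momenta converging to $[p+(-1)^k\delta]$ and total energy $2^{j_*}E(\psi_n)$, whence the midpoint inequality. This requires that $y\mapsto[P](\psi_n(\cdot,y))$ be genuinely non-constant, which is guaranteed via Lemma~\ref{lem:not-1d} only when $\boI_\textup{2d}(p)<\boI_\textup{1d}(p)$; the case $\boI_\textup{2d}(p)=\boI_\textup{1d}(p)$ must be treated separately using the concavity of $\boI_\textup{1d}$. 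Your construction addresses neither the sign reversal nor this dichotomy.

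Second, the asymptotic $\boI_\textup{2d}(p)\sim\sqrt2\,p$ as $p\to0^+$ does not follow from the comparison $\boI_\textup{2d}\leq\boI_\textup{1d}$ and the explicit behaviour of $\boI_\textup{1d}$ near $0$: that only yields $\limsup_{p\to0^+}\boI_\textup{2d}(p)/p\leq\sqrt2$, which is perfectly compatible with $\boI_\textup{2d}$ being linear with a strictly smaller slope near the origin --- exactly the scenario that would defeat strictness. The matching lower bound is Lemma~\ref{lem:deriv-0}, and its proof is a substantial piece of analysis: near-minimizers with $p_n\to0$ form a vanishing sequence, and one must sharpen the error terms of Lemmas~\ref{lem:lions} and~\ref{lem:farfromone} to order $o(p_n)$. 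As written, your argument assumes the hardest ingredient of the strictness proof.
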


The proof of Proposition~\ref{prop:prop-I-l} is reminiscent from the description of a similar minimizing energy in~\cite[Section 3]{BetGrSa1}. This description is heavily based on symmetrization arguments. Due to our periodic setting in the transverse direction $y$, we cannot invoke directly the arguments in~\cite{BetGrSa1}, and we have to refine them properly in order to establish Proposition~\ref{prop:prop-I-l}. 

Concerning the concavity of the map $\boI_\textup{2d}$ on the interval $[0, \pi]$, we rely on the following characterization of concave functions.

\begin{lem}
\label{lem:concave}
Let $f : [a, b] \to \R$ be a continuous function. Given any number $x \in (a, b)$, assume the existence of a number $\delta_x > 0$ such that
\begin{equation}
\label{eq:cond-concave}
\frac{1}{2} \Big( f(x + \delta) + f(x - \delta) \Big) \leq f(x),
\end{equation}
for any number $0 \leq \delta \leq \delta_x$. Then, the function $f$ is concave on $[a, b]$. 
\end{lem}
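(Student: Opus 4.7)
The approach is a proof by contradiction combined with a clopen-set topological step, a standard technique for promoting a local midpoint-type condition to global concavity.

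First, I would argue by contradiction: suppose $f$ is not concave on $[a,b]$. Then there exist points $x_1 < x_2$ in $[a,b]$ and a point in $(x_1, x_2)$ where $f$ lies strictly below the chord joining $(x_1, f(x_1))$ and $(x_2, f(x_2))$. Letting $L$ denote the affine function on $[x_1, x_2]$ that interpolates these endpoints, the auxiliary function $g := f - L$ is continuous on $[x_1, x_2]$, vanishes at both endpoints, and takes a strictly negative value somewhere in the open interval. By compactness, $g$ attains its minimum at some $x^* \in [x_1, x_2]$, and necessarily $x^* \in (x_1, x_2)$ since $g(x_1) = g(x_2) = 0 > g(x^*)$.

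Next, I would exploit the affinity of $L$ to transfer the hypothesis from $f$ to $g$: since $\tfrac{1}{2}(L(x+\delta) + L(x-\delta)) = L(x)$ for all $\delta$, condition \eqref{eq:cond-concave} yields
\[
\frac{1}{2}\bigl(g(x+\delta) + g(x-\delta)\bigr) \leq g(x)
\]
for every $x \in (a,b)$ and every $0 \le \delta \le \delta_x$. Specializing to the minimum point $x^*$, the reverse inequality $\tfrac{1}{2}(g(x^*+\delta) + g(x^*-\delta)) \geq g(x^*)$ holds trivially, so equality must hold throughout. Combined once again with the minimum property, this forces $g \equiv g(x^*)$ on the entire neighborhood $[x^* - \delta_{x^*}, x^* + \delta_{x^*}]$.

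Finally, I would run a clopen-set argument on the level set $S := \{ x \in [x_1, x_2] : g(x) = g(x^*) \}$. Continuity of $g$ makes $S$ closed in $\R$; and since $g(x^*) < 0 = g(x_1) = g(x_2)$, the set $S$ is in fact contained in $(x_1, x_2)$, so the previous flatness observation applied at every point of $S$ shows that $S$ is also open in $\R$. As $S$ is a nonempty, bounded subset of $\R$ that is both open and closed, this violates the connectedness of $\R$, yielding the desired contradiction. The main subtlety, though modest, is recognizing that the correct reduction is to the affine-adjusted function $g = f - L$, and observing that it is precisely at an interior minimum that the hypothesis and the minimum property combine to force local constancy; once that is in place, the topological step is essentially automatic.
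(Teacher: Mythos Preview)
Your proof is correct and shares the same opening move as the paper: subtract the chord to form $g=f-L$, note that $g$ inherits the midpoint inequality, and study an interior minimizer of $g$ on $[x_1,x_2]$. The difference is only in the endgame. The paper selects the \emph{leftmost} minimizer $x_*$; then for small $\delta>0$ one has the strict inequality $g(x_*-\delta)>g(x_*)$, and the midpoint condition forces $g(x_*+\delta)<g(x_*)$, an immediate contradiction with minimality. Your route instead takes an arbitrary minimizer, deduces local constancy of $g$ there, and finishes with a clopen argument on the minimum level set. Both are valid; the paper's leftmost-minimizer trick is a one-line shortcut that avoids the connectedness step, while your argument is slightly longer but perhaps more transparent and does not depend on any particular choice of minimizer. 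One small point worth making explicit in your write-up: when you apply the hypothesis at a point $y\in S$, you should note that $\delta_y$ may be shrunk so that $[y-\delta_y,\,y+\delta_y]\subset(x_1,x_2)$, which is needed both for the hypothesis (since $f$ is only defined on $[a,b]$) and for the minimum property of $g$ to apply.
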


It is well-known that a continuous function $f$ satisfying the inequalities
$$
\frac{1}{2} \Big( f(x_1) + f(x_2) \Big) \leq f \Big( \frac{x_1 + x_2}{2} \Big),
$$
for any numbers $a \leq x_1 \leq x_2 \leq b$ is concave. Lemma~\ref{lem:concave} extends this classical characterization of concavity. For the sake of completeness, we detail the proof of Lemma~\ref{lem:concave} in Appendix~\ref{sec:concave} below.

We are able to check that the function $\boI_\textup{2d}$ satisfies the condition in~\eqref{eq:cond-concave} on the interval $[0, \pi]$.

\begin{lem}
\label{lem:cond-concave}
Let $0 < p < \pi$. There exists a number $\delta_p > 0$ such that
\begin{equation}
\label{eq:cond-concave-I-l}
\frac{1}{2} \Big( \boI_\textup{2d}(p + \delta) + \boI_\textup{2d}(p - \delta) \Big) \leq \boI_\textup{2d}(p),
\end{equation}
for any number $0 \leq \delta \leq \delta_p$.
\end{lem}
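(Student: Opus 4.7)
The plan is to produce, from any near-minimizer $U$ of $\boI_\textup{2d}(p)$, two competitor functions $V^+$ and $V^-$ with momenta $[p+\delta]$ and $[p-\delta]$ respectively, whose energies sum to at most $2E(U)+o(1)$; sending $E(U)$ to $\boI_\textup{2d}(p)$ will then yield \eqref{eq:cond-concave-I-l}. The construction mirrors each half of $U$ across a carefully chosen vertical slice $\{x_0\}\times\T_\ell$, implementing the slicing-and-mirroring strategy announced in the introduction.

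First, I fix $\eta>0$ and choose $U=U_\eta\in X(\R\times\T_\ell)$ with $[P](U)=[p]$ and $E(U)\le\boI_\textup{2d}(p)+\eta$. For every $x_0\in\R$, I introduce the partial momentum $q(x_0)$ obtained by restricting the two-dimensional momentum formula \eqref{eq:defP2D} to $\{x<x_0\}\times\T_\ell$: as a real-valued function, $q$ is continuous, equal to $0$ at $x_0=-\infty$ and equal to $p$ (for a suitable representative of $[p]$) at $x_0=+\infty$. Combining the intermediate value theorem with a Fubini-type averaging argument on a narrow strip of candidate slices, I select $x_0$ satisfying $q(x_0)=(p+\delta)/2$ \emph{and} having small $y$-averaged energy density with $U(x_0,\cdot)$ close to a unimodular constant. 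After a harmless global phase rotation normalizing that constant to $1$, a short interpolation in a thin strip around $x_0$ modifies $U$ into $\widetilde U$ satisfying $\widetilde U(x_0,y)\equiv 1$, at a total energy cost $o(1)$ as $\eta\to 0$ and the strip width shrinks.

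Defining the reflection-conjugation $\boR^{x_0}\psi(x,y):=\overline{\psi(2x_0-x,y)}$, I set
$$
V^+(x,y):=\begin{cases}\widetilde U(x,y), & x\le x_0,\\ \boR^{x_0}\widetilde U(x,y), & x>x_0,\end{cases}\qquad V^-(x,y):=\begin{cases}\boR^{x_0}\widetilde U(x,y), & x\le x_0,\\ \widetilde U(x,y), & x>x_0.\end{cases}
$$
The normalization $\widetilde U(x_0,\cdot)\equiv 1$ makes both $V^\pm$ continuous, and they belong to $X(\R\times\T_\ell)$. The Ginzburg-Landau energy density is invariant under $\boR^{x_0}$, so $E(V^+)+E(V^-)=2E(\widetilde U)\le 2\boI_\textup{2d}(p)+2\eta+o(1)$. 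For the momentum, both the $x$-reflection and complex conjugation individually reverse the sign of the density $\langle i\partial_x\psi,\psi\rangle_\C$, so $\boR^{x_0}$ preserves it; a direct computation with the decomposition \eqref{eq:decomp1d}, tracking the arg-boundary terms, yields $[P](V^+)=[2q(x_0)]=[p+\delta]$ and $[P](V^-)=[2(p-q(x_0))]=[p-\delta]$. Letting $\eta\to 0$ then establishes \eqref{eq:cond-concave-I-l} for every $\delta\in[0,\delta_p]$, with $\delta_p>0$ chosen small enough to keep $(p+\delta)/2$ in the range of $q$ and $p\pm\delta$ inside $(0,\pi)$. I expect the main technical difficulty to lie in this choice of slice: a single $x_0$ must simultaneously realize the prescribed partial momentum $(p+\delta)/2$ \emph{and} exhibit a low-energy, nearly-constant profile in $y$ so that the modification to $\widetilde U$ is cheap, while the mod $\pi$ ambiguity of $[P]$ must be tracked throughout so that the constructed $V^\pm$ truly represent the intended elements of $\R/\pi\Z$ rather than different representatives.
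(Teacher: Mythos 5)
There is a genuine gap, and it sits exactly where you locate it yourself: the choice of the slice $x_0$. Your construction needs a single abscissa $x_0$ that simultaneously (a) splits the momentum as $q(x_0)=(p+\delta)/2$ and (b) carries a low-energy, nearly unimodular trace $U(x_0,\cdot)$, so that the interpolation to $\widetilde U(x_0,\cdot)\equiv 1$ costs only $o(1)$ in energy. These two requirements are in direct conflict. The partial momentum $q$ only varies where the energy density in $x$ is non-negligible (the momentum density is pointwise controlled by the energy density), so the level set $\{q=(p+\delta)/2\}$, with $(p+\delta)/2$ strictly between $0$ and $p$, is forced to lie inside the region where the energy concentrates. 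The planar dark soliton $\gu_{c_p}$ already defeats the argument: there $q$ crosses $(p+\delta)/2\approx p/2$ only near the center of the soliton, where $|\gu_{c_p}|$ is close to its minimum $|c_p|/\sqrt{2}<1$, so forcing the trace to equal $1$ on a thin strip costs an amount of energy bounded below independently of the strip width and of $\eta$. A ``Fubini-type averaging over a narrow strip of candidate slices'' cannot rescue this, because the set of admissible $x_0$ can be a single point pinned to the high-energy core; more generally, any near-minimizer whose energy concentrates in a bounded region (which is what the compactness one is trying to prove would produce) admits no slice satisfying both (a) and (b). The same obstruction would persist even if you only asked for the non-strict inequality~\eqref{eq:cond-concave-I-l}.

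The paper circumvents precisely this difficulty by reflecting in the periodic transverse variable $y$ rather than in $x$: mirror symmetrization of a function across a line $\{y=\mathrm{const}\}$ produces an $H^1$ function with \emph{no} modification of the data and \emph{no} gluing cost, and the energy and momentum of the symmetrized, $2h_*$-periodic functions $\psi_{n,k}$ are exact averages over the dyadic strips $J_k$. The price to pay is that one must locate a $y$-interval over which the averaged slice momentum $\frac{1}{|I|}\int_I[P](\psi_n(\cdot,y))\,dy$ equals $[p+\delta]$; this is what the oscillation Lemma~\ref{lem:osci} delivers, and it is available only because in Case~\ref{Case2} the slice momenta genuinely oscillate around $[p]$ (Lemma~\ref{lem:not-1d}). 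Case~\ref{Case1} is then handled by the entirely soft argument using the concavity of $\boI_\textup{1d}$ together with~\eqref{eq:estim-sup-I}. Your reflection--conjugation computation for the momentum density (that $\psi\mapsto\overline{\psi(2x_0-\cdot,\cdot)}$ preserves $\langle i\partial_x\psi,\psi\rangle_\C$, hence doubles the partial momentum) is correct as far as it goes, but without a low-energy momentum-splitting slice the construction cannot be completed.
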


For the proof of Lemma~\ref{lem:cond-concave}, we distinguish two situations.

\begin{case}
\label{Case1}
$\boI_\textup{2d}(p) = \boI_\textup{1d}(p)$.
\end{case}

In this situation, the condition in~\eqref{eq:cond-concave-I-l} is a direct consequence of the concavity of the function $\boI_\textup{1d}$ on $[0, \pi]$ (see Lemma~\ref{lem:prop-gI}). We can indeed use this property to exhibit a number $\delta_p > 0$ such that
$$
\frac{1}{2} \Big( \boI_\textup{1d}(p + \delta) + \boI_\textup{1d}(p - \delta) \Big) \leq \boI_\textup{1d}(p),
$$
for any $0 \leq \delta \leq \delta_p$. Invoking~\eqref{eq:estim-sup-I}, we obtain
$$
\frac{1}{2} \Big( \boI_\textup{2d}(p + \delta) + \boI_\textup{2d}(p - \delta) \Big) \leq \boI_\textup{1d}(p) = \boI_\textup{2d}(p).
$$
This is exactly~\eqref{eq:cond-concave-I-l}.

\begin{case}
\label{Case2}
$\boI_\textup{2d}(p) < \boI_\textup{1d}(p)$.
\end{case}

Given a minimizing sequence for $\boI_\textup{2d}(p)$ and a number $\delta > 0$, we shall construct and evaluate modified minimizing sequences for $\boI_\textup{2d}(p\pm \delta)$ using a slicing and mirroring argument with respect to a dyadic decomposition in the variable $y$. For that purpose, we first invoke the Fubini theorem to write 
$$
E(\psi) = \frac{1}{\ell} \int_{\T_\ell} E \big( \psi( \cdot, y) \big) \, dy \geq \frac{1}{\ell} \int_{\T_\ell} \boI_\textup{1d} \big( [P](\psi(\cdot, y)) \big) \, dy,
$$
for any function $\psi \in X(\R \times \T_\ell)$. Since $\boI_\textup{1d}$ is $\sqrt{2}$-Lipschitz on $\R / \pi \Z$ (see Lemma~\ref{lem:prop-gI}), it follows that
$$
E(\psi) - \boI_\textup{1d} \big( [P](\psi) \big) \geq - \frac{\sqrt{2}}{\ell }\int_{\T_\ell} \big| [P](\psi(\cdot, y)) - [P](\psi) \big| \, dy.
$$
We have hence proved

\begin{lem}
\label{lem:not-1d}
Let $\psi \in X(\R \times \T_\ell)$ such that $[P](\psi) = [p] \in \R/\pi \Z$. We have
\begin{equation}
\label{eq:not-1d}
\frac{1}{\ell} \int_{\T_\ell} \big| [P](\psi(\cdot, y)) - [p] \big| \, dy \geq \frac{1}{\sqrt{2}} \big( \boI_\textup{1d}(p) - E(\psi) \big). 
\end{equation}
\end{lem}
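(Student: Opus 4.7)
The plan is to apply Fubini to reduce the two-dimensional energy to an integral of one-dimensional slice energies, bound each slice energy from below by the one-dimensional minimum $\boI_\textup{1d}$, and then use the Lipschitz property of $\boI_\textup{1d}$ to compare with $\boI_\textup{1d}(p)$.

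First, since $\nabla\psi \in L^2(\R \times \T_\ell)$ and $1 - |\psi|^2 \in L^2(\R \times \T_\ell)$, Fubini gives that for a.e.\ $y \in \T_\ell$ the slice $\psi(\cdot, y)$ belongs to $X(\R)$, and
$$
E(\psi) = \frac{1}{\ell} \int_{\T_\ell} E\big(\psi(\cdot, y)\big) \, dy.
$$
For each such slice the momentum $[P](\psi(\cdot, y)) \in \R/\pi\Z$ is well-defined (and one should check that it depends measurably on $y$, which follows from writing it as a limit of integrals of a.e.\ measurable integrands in $y$), so that by definition of $\boI_\textup{1d}$,
$$
E\big(\psi(\cdot, y)\big) \geq \boI_\textup{1d}\big([P](\psi(\cdot, y))\big) \quad \text{for a.e. } y \in \T_\ell.
$$

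Next, I would invoke the $\sqrt{2}$-Lipschitz continuity of $\boI_\textup{1d}$ on $\R/\pi\Z$ (as recalled, e.g., in Lemma~\ref{lem:prop-gI}) to write, for a.e.\ $y$,
$$
\boI_\textup{1d}\big([P](\psi(\cdot, y))\big) \geq \boI_\textup{1d}([p]) - \sqrt{2}\, \big|[P](\psi(\cdot, y)) - [p]\big|,
$$
where $|\cdot|$ denotes the distance in $\R/\pi\Z$. Averaging over $y \in \T_\ell$ and combining with the Fubini identity above yields
$$
E(\psi) \geq \boI_\textup{1d}(p) - \frac{\sqrt{2}}{\ell} \int_{\T_\ell} \big|[P](\psi(\cdot, y)) - [p]\big| \, dy,
$$
which is exactly~\eqref{eq:not-1d} after rearrangement.

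The only genuinely nontrivial point to verify is that the slice momentum is well-defined and measurable as a function of $y$; note in particular that the argument nowhere requires a relation between $[P](\psi)$ and the $y$-average of the slice momenta, so we avoid having to reconcile the definition~\eqref{eq:defP2D} (using the splitting $\psi = \widehat{\psi} + w$) with the slice-by-slice definition. Once measurability is in hand, the rest is just the Fubini-plus-Lipschitz calculation sketched above.
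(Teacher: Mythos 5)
Your proposal is correct and follows exactly the paper's own argument: Fubini to slice the energy, the lower bound $E(\psi(\cdot,y)) \geq \boI_\textup{1d}([P](\psi(\cdot,y)))$ from the definition of $\boI_\textup{1d}$, and the $\sqrt{2}$-Lipschitz continuity of $\boI_\textup{1d}$ to compare with $\boI_\textup{1d}(p)$, followed by averaging in $y$. Your added remarks on measurability of the slice momentum and on not needing to identify $[P](\psi)$ with the $y$-average of the slice momenta are sensible but do not change the route.
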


Consider now a minimizing sequence $(\psi_n)_{n \geq 0}$ for $\boI_\textup{2d}(p)$. Provided that $n$ is sufficiently large, it satisfies
\begin{equation}\label{eq:defsigma}
\frac{1}{\ell} \int_{\T_\ell} \big| [P](\psi_n(\cdot, y) - [p] \big| \, dy \geq \frac{1}{2} \big( \boI_\textup{1d}(p) - \boI_\textup{2d}(p) \big) > 0,
\end{equation}
so that the numbers $[P](\psi_n(\cdot, y))$ cannot be uniformly close to $[p]$ in the limit $n \to \infty$. For $j \geq 0$, we then define the quantities
\begin{equation}
\label{def:delta-n-j}
\delta^{(j)} := \limsup_{n \to \infty } \, \sup_{I_j} \bigg| \frac{1}{|I_j|} \int_{I_j} [P] \big( \psi_n(\cdot, y) \big) \, dy - [p] \bigg|,
\end{equation}
where $I_j$ refers to an arbitrary interval of size $2^{- j} \, \ell$ in $\T_\ell$. 
Although $[P](\psi_n(\cdot,y))$ only makes sense as a number in $\R / \pi\Z$, note that it is equal to
\begin{equation}
\label{eq:cle}
[P] \big( \psi_n(\cdot, y) \big) = [P] \big( \widehat{\psi}_n \big) + \frac{1}{2} \int_\R \langle i \partial_x w_n(x, y), w_n(x, y) \rangle_\C \, dx,
\end{equation}
and therefore of the form of a fixed value in $\R / \pi \Z$ added to an integrable real-valued function of $y$. This provides the correct meaning to the integral term in~\eqref{def:delta-n-j}.

Using oscillation estimates in the variable $y$, we next derive from~\eqref{eq:not-1d} that

\begin{lem}
\label{lem:presymmetry}
There exist numbers $n_0 \in \N$, $\delta_0 > 0$ and $j_0 \in \N$ such that, for any $n \geq n_0$, any $0 \leq |\delta| \leq \delta_0$, and any $j \geq j_0$, there exists an interval $I_n$ of size $2^{- j} \ell$ in $\T_\ell$ such that the sequence $(\psi_n)_{n \geq 0}$ satisfies
$$
\frac{1}{|I_n|} \int_{I_n} [P] \big( \psi_n(\cdot, y) \big) \, dy = [p + \delta].
$$
\end{lem}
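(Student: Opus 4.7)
The plan is to define, for each index $n$ and each dyadic level $j$, a continuous $\R/\pi\Z$-valued function $A_n^{(j)}$ on $\T_\ell$ recording the average of $[P](\psi_n(\cdot,\cdot))$ over the interval of size $2^{-j}\ell$ centered at the variable point, and to attain the value $[p+\delta]$ via an intermediate value argument on $A_n^{(j)}$. Using the decomposition \eqref{eq:cle},
$$A_n^{(j)}(y_0) := \frac{1}{2^{-j}\ell}\int_{y_0-2^{-j-1}\ell}^{y_0+2^{-j-1}\ell}[P]\big(\psi_n(\cdot,y)\big)\,dy = [P]\big(\widehat{\psi}_n\big) + \bar{h}_n^{(j)}(y_0) \bmod \pi,$$
where $h_n(y) := \frac{1}{2}\int_\R \langle i\partial_x w_n(x,y),w_n(x,y)\rangle_\C\,dx$ is real-valued and $\bar{h}_n^{(j)}$ is its sliding average over the same window. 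The map $A_n^{(j)}$ is continuous on $\T_\ell$ and, by Fubini, its mean equals $[P](\psi_n) = [p]$.

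The first technical step is a uniform $W^{1,1}(\T_\ell)$ bound on $h_n$. An integration by parts in $x$ (boundary terms vanish since $w_n \in H^1$) gives $\partial_y h_n(y) = \int_\R \langle i\partial_x w_n(x,y), \partial_y w_n(x,y)\rangle_\C\,dx$, so that by Cauchy--Schwarz $\|\partial_y h_n\|_{L^1(\T_\ell)} \leq \|\partial_x w_n\|_{L^2}\|\partial_y w_n\|_{L^2} \leq C$ uniformly in $n$, as the energy is uniformly bounded along any minimizing sequence. A standard translation estimate then yields
$$\|\bar{h}_n^{(j)} - h_n\|_{L^1(\T_\ell)} \leq 2^{-j-2}\ell\,\|\partial_y h_n\|_{L^1(\T_\ell)} \leq C\,2^{-j}.$$
Since the $\R/\pi\Z$-distance is bounded by the absolute value of any real lift, combining the triangle inequality with \eqref{eq:defsigma} gives, for $j \geq j_0$ and $n \geq n_0$ chosen appropriately,
$$\frac{1}{\ell}\int_{\T_\ell}d_{\R/\pi\Z}\big(A_n^{(j)}(y_0),\,[p]\big)\,dy_0 \geq \sigma - C\,2^{-j} \geq \frac{\sigma}{2},$$
where $\sigma := \tfrac{1}{2}\big(\boI_\textup{1d}(p) - \boI_\textup{2d}(p)\big) > 0$.

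For the intermediate value step, the image of $A_n^{(j)}$ is a connected subset of $\R/\pi\Z$. If it equals the full circle, there is nothing to prove. Otherwise it is an arc of length $< \pi$, which lifts bijectively to an interval $\tilde{A} \subset \R$, and $A_n^{(j)}$ lifts to a continuous real-valued function $\tilde{A}_n^{(j)}:\T_\ell \to \tilde{A}$. Within $\tilde{A}$ the $\R/\pi\Z$-distance coincides with the absolute value, so the mean of $\tilde{A}_n^{(j)}$ is a lift $\tilde{p}$ of $[p]$ and
$$\int_{\T_\ell}\big(\tilde{A}_n^{(j)} - \tilde{p}\big)\,dy_0 = 0, \qquad \int_{\T_\ell}\big|\tilde{A}_n^{(j)} - \tilde{p}\big|\,dy_0 \geq \frac{\sigma\ell}{2}.$$
Splitting into positive and negative parts (each with integral $\geq \sigma\ell/4$) forces $\max \tilde{A}_n^{(j)} \geq \tilde{p} + \sigma/4$ and $\min \tilde{A}_n^{(j)} \leq \tilde{p} - \sigma/4$. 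Setting $\delta_0 := \sigma/4$, the intermediate value theorem yields, for every $|\delta| \leq \delta_0$, some $y_n \in \T_\ell$ with $\tilde{A}_n^{(j)}(y_n) = \tilde{p} + \delta$; the centered interval of size $2^{-j}\ell$ at $y_n$ is the desired $I_n$.

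The main subtlety is the careful handling of the $\R/\pi\Z$-arithmetic when translating the global $L^1$ deviation of $A_n^{(j)}$ into a range statement: the dichotomy ``full circle versus arc of length $<\pi$'' is what allows a switch from a circular to a linear IVT argument, while the uniform $W^{1,1}$ bound on $h_n$ is what makes $j_0$ independent of $n$.
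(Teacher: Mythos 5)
Your argument is correct, and it shares the paper's two essential inputs: the reduction to the real-valued fiberwise momentum $q_n(y)=\tfrac12\int_\R\langle i\partial_x w_n,w_n\rangle_\C\,dx$ with the uniform bound $\|q_n'\|_{L^1(\T_\ell)}\le \ell E(\psi_n)\le C$ (your $h_n$; see~\eqref{eq:cont-deriv-q}--\eqref{eq:borne-deriv-q}), and the $L^1$ lower bound~\eqref{eq:defsigma} coming from Lemma~\ref{lem:not-1d}. Where you genuinely depart from the paper is in the middle quantitative step. The paper isolates this step as Lemma~\ref{lem:osci} and proves it by discretizing $\T_\ell$ into the $N=2^j$ subintervals, introducing the index sets $\boJ_\pm,\boK_\pm,\boL$ and running a Tchebychev counting argument to exhibit one subinterval where the average of $q_n$ exceeds $\delta_0$ and one where it is below $-\delta_0$. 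You instead work directly with the sliding average $\bar h_n^{(j)}$, control $\|\bar h_n^{(j)}-h_n\|_{L^1}\le 2^{-j-2}\ell\,\|h_n'\|_{L^1}$ by the standard translation estimate, and then extract $\max\ge\tilde p+\sigma/4$ and $\min\le\tilde p-\sigma/4$ from the mean-zero/$L^1$-mass dichotomy; both proofs then finish identically with the intermediate value theorem applied to the continuous sliding average. Your route is shorter and arguably cleaner; the paper's counting version has the mild advantage of producing an explicit admissible threshold $\delta_0=\min\{\sigma/(4\ell),C/2\}$ and of being stated as a self-contained lemma reusable elsewhere, but the two are quantitatively equivalent for the purpose at hand. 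One small imprecision: your assertion that ``within $\tilde A$ the $\R/\pi\Z$-distance coincides with the absolute value'' fails when the arc has length in $[\pi/2,\pi)$; fortunately you only ever need the one-sided inequality $|\tilde A_n^{(j)}(y_0)-\tilde p|\ge d_{\R/\pi\Z}\bigl(A_n^{(j)}(y_0),[p]\bigr)$, which holds for any choice of lifts, so the displayed lower bound $\int_{\T_\ell}|\tilde A_n^{(j)}-\tilde p|\ge\sigma\ell/2$ and the rest of the argument are unaffected.
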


We are now in position to finish the proof of Lemma~\ref{lem:cond-concave} in Case~\ref{Case2}, using a symmetrization argument. 

First, it follows from Lemma~\ref{lem:presymmetry} that $\delta^{(j)} > 0$ for any $j \geq j_0.$ We next define
$$
j_* := \min \left\{ j \geq 0,\text{ s.t. } \delta^{(j)} > 0\right\}. 
$$

Set $h_* := 2^{- j_*} \, \ell$ and $\delta_* = \delta^{(j_*)}/2$. By definition of $j_*$, there exists a subsequence (still denoted $\psi_n$) for which, for each $n$, there exists an interval $I_n$ of size $h_*$ such that 
$$
\frac{1}{|I_n|} \int_{I_n} [P] \big( \psi_n(\cdot, y) \big) \, dy = [ p + \epsilon \delta_* ],
$$
with $\epsilon = \pm 1$. Since the function $t \mapsto \frac{1}{h_*} \int_{t}^{t+h_*} [P] \big( \psi_n(\cdot, y) \big) \, dy$ is continuous and of mean value $[p]$, for any $\delta$ in between $0$ and $\epsilon \delta_*$, and for any $n$, there exist numbers $y_n$ such that
$$
\frac{1}{h_*} \int_{y_n}^{y_n + h_*} [P] \big( \psi_n(\cdot, y) \big) \, dy = [p + \delta].
$$
Without loss of generality, we can use translation invariance in order to assume that $y_n = 0$. We next consider the $2^{j_*}$ subintervals $J_k = [k \, h_*, (k + 1) \, h_*]$ of $[0, \ell]$. By construction, we already have
$$
\frac{1}{|J_0|} \int_{J_0} [P] \big( \psi_n(\cdot, y) \big) \, dy = [p + \delta].
$$
By definition of the integer $j_*$, and in particular by the fact that $\delta^{(j_* - 1)} = 0$, we deduce that
$$
\lim_{n \to \infty} \frac{1}{2|J_k|} \int_{J_k \cup J_{k+(-1)^{k+1}}} [P] \big(
\psi_n(\cdot, y) \big) \, dy = [p],
$$
and therefore
$$
\lim_{n \to \infty} \frac{1}{|J_k|} \int_{J_k} [P] \big( \psi_n(\cdot, y) \big) \, dy = [p + (- 1)^k \delta].
$$
For each $0 \leq k \leq 2^{j_*} - 1$, we define the symmetrized function $\psi_{n, k}$ by the properties :
\begin{enumerate}
\item $\psi_{n, k}$ is equal to $\psi_n$ on the strip $\R \times J_k$,
\item $\psi_{n, k}$ is mirror symmetric between the two consecutive strips $\R \times J_k$ and $\R \times J_{k + 1}$,
\item $\psi_{n, k}$ is invariant by translation of $2 h_*$ in the variable $y$.
\end{enumerate}
Note that by construction the resulting functions $\psi_{n, k}$ still belong to $X(\R \times \T_\ell)$. Their energy is given by
$$
E \big( \psi_{n, k} \big) = \frac{1}{h_*} \int_{J_k} \int_\R e(\psi_n)(x, y) \, dx \, dy,
$$
and we also have
\begin{equation}
\label{def:p-n-m}
[P] \big( \psi_{n, k} \big) = \frac{1}{h_*} \int_{J_k} [P] \big( \psi_n(x, y) \big) \, dx \, dy \to \big[ p + (- 1)^k \delta \big],
\end{equation}
as $n \to \infty$. As a consequence of the definition of the minimization problem in~\eqref{def:I-l}, these energies satisfy
\begin{equation}
\label{eq:conc-ineq-n-m}
2^{j_*} E \big( \psi_n \big) = \sum_{k = 0}^{2^{j_*} - 1} E \big( \psi_{n, k} \big) \geq \sum_{k = 0}^{2^{j_*} - 1} \boI_\textup{2d} \big( [P](\psi_{n, k}) \big).
\end{equation}
Combining~\eqref{eq:conc-ineq-n-m} with~\eqref{def:p-n-m}, and the continuity of the minimizing energy $\boI_\textup{2d}$ given by Lemma~\ref{lem:prop-I-l}, we are led to the inequality
$$
\boI_\textup{2d}(p) \geq \frac{1}{2} \Big( \boI_\textup{2d}(p + \delta) + \boI_\textup{2d}(p - \delta) \Big),
$$
in the limit $n \to \infty$. This is exactly the inequality in~\eqref{eq:cond-concave-I-l}, and this inequality thus holds for $|\delta| \leq \delta_p := \delta_*$. This completes the proof of Lemma~\ref{lem:cond-concave} in Case~\ref{Case2}.

In order to complete the proof of Proposition~\ref{prop:prop-I-l}, it remains to derive the strict sub-additivity of the minimizing energy $\boI_\textup{2d}$. This property results from the concavity of this function by invoking the following lemma. 

\begin{lem}
\label{lem:strict-subadditivity}
Given a number $R > 0$, consider a non-negative and concave function $f$ on $[0, R]$ such that $f(0) = 0$. Either the function $f$ is strictly sub-additive on $[0, R]$, or there exist two numbers $0 < x_* \leq R$ and $\mu \geq 0$ such that
\begin{equation}
\label{eq:linear}
f(x) = \mu x,
\end{equation}
for $0 \leq x \leq x_*$.
\end{lem}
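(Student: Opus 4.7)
The plan is as follows. First, one notes that concavity of $f$ combined with $f(0) = 0$ automatically yields the weak sub-additivity $f(x_1 + x_2) \leq f(x_1) + f(x_2)$ for $x_1, x_2 \geq 0$ with $x_1 + x_2 \leq R$. Indeed, writing $x_i = \tfrac{x_i}{x_1 + x_2}(x_1 + x_2) + \tfrac{x_{3 - i}}{x_1 + x_2} \cdot 0$ and applying concavity gives $f(x_i) \geq \tfrac{x_i}{x_1 + x_2} f(x_1 + x_2)$, and summing over $i = 1, 2$ produces the claim. Equivalently, the function $x \mapsto f(x)/x$ is non-increasing on $(0, R]$.

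Next, suppose that $f$ is not strictly sub-additive. Then there exist $a, b > 0$ with $a + b \leq R$ and $f(a + b) = f(a) + f(b)$. Looking at when equality holds in the previous argument, both intermediate inequalities must be equalities, which forces
\[
\frac{f(a)}{a} = \frac{f(b)}{b} = \frac{f(a + b)}{a + b} =: \mu,
\]
and $\mu \geq 0$ follows from $f \geq 0$.

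It remains to upgrade these three pointwise equalities to genuine linearity on an interval $[0, x_*]$ starting from the origin. For the lower bound $f(x) \geq \mu x$ on $[0, a + b]$, it is enough to apply concavity to the chord joining $(0, 0)$ and $(a + b, \mu(a + b))$. For the matching upper bound $f(x) \leq \mu x$ on $(0, a)$, I would use concavity at the interior point $a$ relative to the endpoints $x$ and $a + b$: writing $a = \lambda x + (1 - \lambda)(a + b)$ with $\lambda = \tfrac{b}{a + b - x}$, concavity gives $f(a) \geq \lambda f(x) + (1 - \lambda) f(a + b)$; substituting $f(a) = \mu a$ and $f(a + b) = \mu(a + b)$ and solving for $f(x)$ produces $f(x) \leq \mu x$ after a short algebraic computation. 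On the remaining interval $[a, a + b]$, the equality $f(x) = \mu x$ follows directly from the monotonicity of $x \mapsto f(x)/x$ together with its equal values at the endpoints.

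Finally, I would set $x_* := \sup \{x \in (0, R] : f(t) = \mu t \text{ for every } t \in [0, x]\}$, which satisfies $x_* \geq a + b > 0$; continuity of $f$ then implies that the supremum is attained, hence $f(t) = \mu t$ on $[0, x_*]$. The main obstacle is the third step, namely the passage from three isolated equalities to linearity on the whole interval $[0, a + b]$; this is precisely the place where concavity must be invoked in both directions, rather than only as a provider of lower bounds.
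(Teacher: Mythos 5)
Your argument is correct and follows essentially the same route as the paper's proof: derive sub-additivity from concavity and $f(0)=0$, observe that equality $f(a+b)=f(a)+f(b)$ forces the three slopes $f(a)/a$, $f(b)/b$, $f(a+b)/(a+b)$ to coincide, and then use concavity in both directions (chords through the origin for the lower bound, concavity at the interior point $a$ for the upper bound) to get $f(x)=\mu x$ on all of $[0,a+b]$. The only superfluous element is the final supremum $x_*=\sup\{x : f(t)=\mu t \text{ on } [0,x]\}$ and the appeal to continuity to attain it (concavity alone does not give continuity at the right endpoint $R$); this is not needed, since taking $x_*=a+b$ already yields the stated alternative, exactly as the paper does.
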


When the function $f$ is concave on $[0, R]$, with $f(0) = 0$, we indeed know that
$$
f(y) \geq \frac{y}{x} f(x) + \Big( 1 - \frac{y}{x} \Big) f(0) \geq \frac{y}{x} f(x), 
$$
for any $0 < y \leq x \leq R$. Summing these inequalities for $0 < y_1, y_2 \leq R$ such that $0 < y_1 + y_2 \leq R$, gives
$$
f(y_1) + f(y_2) \geq \frac{y_1}{y_1 + y_2} f(y_1 + y_2) + \frac{y_2}{y_1 + y_2} f(y_1 + y_2) = f(y_1 + y_2),
$$
that is the sub-additivity of the function $f$. The alternative in Lemma~\ref{lem:strict-subadditivity} follows from analyzing the possible case of equality in this inequality. We refer to Appendix~\ref{sec:concave} below for more details. 

The function $\boI_\textup{2d}$ cannot be linear in a right neighborhood of zero, since on the one hand by inequality~\eqref{eq:estim-sup-I}, we have $\boI_\textup{2d}(p) < \sqrt{2} \, p$, and on the other hand, we have

\begin{lem}
\label{lem:deriv-0}
The function $\boI_\textup{2d}$ satisfies $\boI_\textup{2d}(p) \sim \sqrt{2} \, p$ as $p \to 0^+$.
\end{lem}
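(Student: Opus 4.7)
The plan is to prove the asymptotic by matching upper and lower bounds, exploiting a linearization around the constant function~$1$.

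\emph{Upper bound.} By Lemma~\ref{lem:prop-I-l} one has $\boI_\textup{2d}(p) \leq \boI_\textup{1d}(p)$. Substituting $c = \sqrt 2 \cos\theta$ in~\eqref{eq:gu-val} and using the Hamilton identity $dp/dc = -\sqrt{2-c^2}$, a direct computation yields $E(\gu_c) = \tfrac{1}{3}(2-c^2)^{3/2} = \tfrac{2\sqrt 2}{3}\sin^3\theta$ and $p(c) = \theta - \tfrac{1}{2}\sin(2\theta) \sim \tfrac{2}{3}\theta^3$, so that $\boI_\textup{1d}(p)/p \to \sqrt 2$ as $p \to 0^+$, whence $\limsup_{p \to 0^+}\boI_\textup{2d}(p)/p \leq \sqrt 2$.

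\emph{Lower bound.} I would take near-minimizers $(\psi_n) \subset X(\R \times \T_\ell)$ with $[P](\psi_n) = [p_n]$, $p_n \to 0^+$, and $E(\psi_n) \leq \boI_\textup{2d}(p_n) + p_n^2$, so $E(\psi_n) \to 0$. Decomposing $\psi_n = \widehat{\psi}_n + w_n$ as in~\eqref{eq:decomp1d} and integrating in $y$ splits the Ginzburg-Landau energy as $E(\psi_n) = E_{\rm 1D}(\widehat{\psi}_n) + E_2(\widehat{\psi}_n, w_n) + E_{\rm h}$, where $E_{\rm 1D}$ is the one-dimensional energy of $\widehat{\psi}_n \in X(\R)$, $E_2$ is a quadratic form in $w_n$ with coefficients depending on $\widehat{\psi}_n$, and $E_{\rm h}$ collects the cubic and quartic contributions in $w_n$. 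Poincar\'e in $y$ applied to the zero-mean $w_n$ yields $\|w_n\|_{H^1(\R\times\T_\ell)}^2 \leq C_\ell E(\psi_n)$, so $E_{\rm h} = O(\|w_n\|_{H^1}^3) = o(p_n)$ via the Sobolev embedding $H^1(\R\times\T_\ell) \hookrightarrow L^4$. As $E(\psi_n) \to 0$, the 1D function $\widehat{\psi}_n$ is uniformly close to a unit-modulus constant, which one normalizes to $1$ by a global phase rotation; at leading order this gives $E_2(\widehat{\psi}_n, w_n) = \tfrac{1}{2\ell}\int_{\R \times \T_\ell}(|\nabla w_n|^2 + 2(\mathrm{Re}\,w_n)^2) + o(p_n)$. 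Passing to the Fourier side with $\xi \in \R$ dual to $x$, $k \in \tfrac{2\pi}{\ell}\Z$ dual to $y$, and writing $w_n = \alpha_n + i \beta_n$, AM-GM combined with the elementary inequality $(|\xi|^2 + k^2 + 2)(|\xi|^2 + k^2) \geq (|\xi|^2 + 2)|\xi|^2 \geq 2 |\xi|^2$---which encodes the sound speed $\sqrt 2$ of the linearization of~\eqref{eq:gp} at $1$---gives, mode by mode,
\begin{equation*}
\tfrac{1}{2}\bigl[(|\xi|^2+k^2+2)|\widehat{\alpha}_n(\xi, k)|^2 + (|\xi|^2+k^2)|\widehat{\beta}_n(\xi, k)|^2\bigr] \geq \sqrt 2\,|\xi|\,|\widehat{\alpha}_n(\xi, k)|\,|\widehat{\beta}_n(\xi, k)|.
\end{equation*}
Summing/integrating yields $\tfrac{1}{2\ell}\int_{\R\times\T_\ell}(|\nabla w_n|^2 + 2\alpha_n^2) \geq \sqrt 2\,\bigl|\tfrac{1}{2\ell}\int_{\R\times\T_\ell}\langle i\partial_x w_n, w_n\rangle_\C\bigr|$. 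Combined with the 1D bound $E_{\rm 1D}(\widehat{\psi}_n) \geq \boI_\textup{1d}(|[P](\widehat{\psi}_n)|) \geq (\sqrt 2 - o(1))|[P](\widehat{\psi}_n)|$ (Lemma~\ref{lem:prop-gI}), the momentum identity~\eqref{eq:defP2D}, and the triangle inequality in $\R$ (on the small representatives modulo $\pi$), one obtains $E(\psi_n) \geq \sqrt 2\,p_n - o(p_n)$, and therefore $\liminf_{n \to \infty}\boI_\textup{2d}(p_n)/p_n \geq \sqrt 2$.

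\emph{Main obstacle.} The principal hurdle is the quantitative linearization: rigorously showing that $\widehat{\psi}_n$ is uniformly close to a unit-modulus constant with an error small enough to ensure $E_2(\widehat{\psi}_n, w_n) = \tfrac{1}{2\ell}\int(|\nabla w_n|^2 + 2(\mathrm{Re}\,w_n)^2) + o(p_n)$, and simultaneously identifying the representative of $[P](\widehat\psi_n) \in \R/\pi\Z$ for which the 1D lower bound applies. These controls rely on Sobolev-type estimates specific to the cylinder $\R \times \T_\ell$ (where the transverse direction is compact) and on the structural analysis of the energy space $X(\R \times \T_\ell)$ developed in our companion paper~\cite{deLGrSm1}.
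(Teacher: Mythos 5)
Your overall strategy is the right one and matches the paper's: the upper bound via $\boI_\textup{1d}$, and for the lower bound a near-minimizing sequence with $E(\psi_n)\le C p_n$, which is therefore vanishing, followed by a reduction of the energy to a quadratic form in $w_n$ with superquadratic errors of order $O(p_n^{3/2})=o(p_n)$, and finally a sharp inequality encoding the sound speed $\sqrt2$. The quantitative error bookkeeping ($\|w_n\|_{H^1}^2\le C_\ell E(\psi_n)$, hence cubic terms are $o(p_n)$) is also in the spirit of the paper, which reruns the estimates of Lemmas~\ref{lem:lions} and~\ref{lem:farfromone} with the extra input $E(\psi_n)\le 2p_n$.

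However, there is a genuine gap at the step you yourself flag as the ``principal hurdle'', and it is not merely technical. You replace $2\langle w_n,\widehat{\psi}_n\rangle_\C^2$ by $2(\mathrm{Re}\,w_n)^2$ by ``normalizing $\widehat{\psi}_n$ to $1$ by a global phase rotation''. Lemma~\ref{lem:conv-const} only controls the \emph{modulus}: $\sup_x\big|1-|\widehat{\psi}_n(x)|\big|\to 0$. The phase $\varphi_n$ of $\widehat{\psi}_n=\rho_n e^{i\varphi_n}$ is only controlled through $\|\partial_x\varphi_n\|_{L^2(\R)}^2\le C E(\psi_n)\le Cp_n$, which does not prevent $\varphi_n$ from drifting by an amount of order $1$ over intervals of length $p_n^{-1}$; indeed the quantity $\tfrac12\int_\R(1-\rho_n^2)\partial_x\varphi_n$ is precisely the one-dimensional momentum and is of order $p_n$, so $\widehat{\psi}_n$ is \emph{not} uniformly close to a single constant. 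Consequently the quadratic form is genuinely $2\rho_n^2\big(\cos\varphi_n\,\alpha_n+\sin\varphi_n\,\beta_n\big)^2$ with $x$-dependent coefficients, it does not diagonalize on the Fourier side, and your mode-by-mode inequality does not apply. The paper circumvents this entirely: it first rewrites the $w$-part of the momentum as in~\eqref{eq:tauxplein}, so that the factor $\langle w_n,\widehat{\psi}_n\rangle_\C$ appears explicitly (at the cost of a remainder $\gR_n=O(p_n^2)$ coming from an integration by parts), and then applies the \emph{pointwise} Young inequality
\begin{equation*}
\sqrt2\,\big|\langle w_n,\widehat{\psi}_n\rangle_\C\big|\,\big|\partial_x w_n\big|\le \tfrac12\Big(|\partial_x w_n|^2+2\langle w_n,\widehat{\psi}_n\rangle_\C^2\Big),
\end{equation*}
which is insensitive to the phase of $\widehat{\psi}_n$, together with the analogous pointwise bound $\sqrt2\,\rho_n|\partial_x\varphi_n|\,|1-\rho_n^2|/\sqrt2\le \tfrac12\big(\rho_n^2|\partial_x\varphi_n|^2+\tfrac12(1-\rho_n^2)^2\big)$ for the one-dimensional part. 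To repair your argument you should abandon the constant-coefficient Fourier diagonalization and substitute this variable-coefficient, pointwise version, i.e.\ essentially the computation of Lemma~\ref{lem:farfromone} upgraded to $o(p_n)$ errors.
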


Therefore the minimizing energy $\boI_\textup{2d}$ is strictly sub-additive on $[0, \pi]$. The strict sub-additivity on $\R /\pi \Z$ follows from parity and periodicity arguments. We refer to Section~\ref{sec:energy} for the details, and mention that the proof of Lemma~\ref{lem:deriv-0} relies on arguments similar to the ones that we are now going to describe in the next subsection and which are related to the possible vanishing of minimizing sequences for~\eqref{def:I-l}.

\subsection{The concentration-compactness argument}
\label{sub:conc-comp}

We fix a number $\gp = [p] \in \R / \pi \Z$, with $[p] \neq [0]$. In case $[p] = [0]$, the minimization problem $\boI_\textup{2d}(0)$ is indeed achieved by the constant functions $\psi$ of modulus one for which $[P](\psi) = [0]$, and $\boI_\textup{2d}(0) = E(\psi) = 0$. We consider a minimizing sequence of functions $(\psi_n)_{n \geq 0}$ for~\eqref{def:I-l}, which we write as $\psi_n = \widehat{\psi}_n + w_n$ according to~\eqref{eq:decomp1d}.
We wish to show some compactness for the sequence $(\psi_n)_{n \geq 0}$, up to possible translations and constant
phase shifts. The main obstacle is the unboundedness of the domain in the $x$-direction. We deal with it using the
classical scheme of concentration-compactness, by establishing a non-vanishing and a non-splitting property.

\begin{def*}
We say that a sequence $(\psi_n)_{n \geq 0}$ in $X(\R \times \T_\ell)$ is vanishing if for some $r > 0$,
\begin{equation}
\label{eq:vanishing}
\liminf_{n \to \infty} \, \sup_{a \in \R} \, \frac{1}{\ell} \int_{B(a,r)} \int_{\T_\ell} e(\psi_n) = 0.
\end{equation}
\end{def*}

We first prove

\begin{lem}
\label{lem:conv-const}
Let $(\psi_n)_{n \geq 0}$ in $X(\R \times \T_\ell)$ be a vanishing sequence. Then 
writing $\psi_n = \widehat{\psi}_n + w_n$ as in~\eqref{eq:decomp1d}, we have
$$
\liminf_{n \to \infty} \, \sup_{x \in \R} \, \big| 1 - |\widehat{\psi}_n(x)| \big| = 0.
$$
\end{lem}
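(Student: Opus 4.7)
The plan is to reduce the claim to a one-dimensional estimate on the $y$-average $\widehat{\psi}_n$, and then conclude via the $1$-dimensional Sobolev embedding $H^1 \hookrightarrow L^\infty$ on each interval $B(a,r) \subset \R$. The two-dimensional obstacle, namely the failure of $H^1 \hookrightarrow L^\infty$ on $\R \times \T_\ell$, is bypassed by transferring everything to $\widehat{\psi}_n$, which lives on the one-dimensional domain $\R$.

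First, extract a subsequence along which $\epsilon_n := \sup_{a \in \R} \frac{1}{\ell} \int_{B(a,r) \times \T_\ell} e(\psi_n) \to 0$. Integrating in $y$ and applying Jensen's inequality gives $\|\widehat{\psi}_n'\|_{L^2(B(a,r))}^2 \leq C \epsilon_n$. Since $w_n(x, \cdot)$ has zero mean on $\T_\ell$ for every $x$, the Poincar\'e inequality on $\T_\ell$, combined with the fact that $\partial_y w_n = \partial_y \psi_n$, yields $\int_{B(a,r) \times \T_\ell} |w_n|^2 \leq C \int_{B(a,r) \times \T_\ell} |\partial_y \psi_n|^2 \leq C \ell \epsilon_n$, uniformly in $a$.

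Next, use the key identity
\begin{equation*}
1 - |\widehat{\psi}_n(x)|^2 = \frac{1}{\ell} \int_{\T_\ell} \bigl( 1 - |\psi_n(x,y)|^2 \bigr) \, dy + \frac{1}{\ell} \int_{\T_\ell} |w_n(x,y)|^2 \, dy,
\end{equation*}
together with Cauchy-Schwarz, to infer $\bigl| \int_{B(a,r)} \bigl(1 - |\widehat{\psi}_n|^2\bigr) \bigr| \leq C \sqrt{\epsilon_n}$ uniformly in $a$. Setting $\mu_n(a) := \frac{1}{2r} \int_{B(a,r)} \widehat{\psi}_n$, expand $\int_{B(a,r)} |\widehat{\psi}_n|^2 = 2 r |\mu_n(a)|^2 + \|\widehat{\psi}_n - \mu_n(a)\|_{L^2(B(a,r))}^2$, the cross term vanishing by definition of the mean. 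Combining with the Poincar\'e-Wirtinger bound $\|\widehat{\psi}_n - \mu_n(a)\|_{L^2(B(a,r))}^2 \leq C r^2 \epsilon_n$, conclude that $|\mu_n(a)| = 1 + O(\sqrt{\epsilon_n})$ uniformly in $a \in \R$.

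Finally, since $\widehat{\psi}_n - \mu_n(a)$ satisfies $\|\widehat{\psi}_n - \mu_n(a)\|_{H^1(B(a,r))}^2 \leq C \epsilon_n$ with $C$ depending only on $r$, the $1$D Sobolev embedding yields $\|\widehat{\psi}_n - \mu_n(a)\|_{L^\infty(B(a,r))} \leq C \sqrt{\epsilon_n}$. The triangle inequality then gives $\bigl| |\widehat{\psi}_n(x)| - 1 \bigr| \leq C \sqrt{\epsilon_n}$ for every $x \in B(a,r)$, and since $a \in \R$ is arbitrary, $\sup_{x \in \R} \bigl| 1 - |\widehat{\psi}_n(x)| \bigr| \leq C \sqrt{\epsilon_n} \to 0$ along the chosen subsequence. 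The main technical point is ensuring that all constants $C$ depend only on $r$ and $\ell$, but not on $a$; this is automatic from translation invariance of $\R$ and $\R \times \T_\ell$. No single step is delicate, the whole argument being a careful bookkeeping of local one-dimensional estimates obtained by averaging out the transverse direction.
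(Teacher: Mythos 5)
Your proof is correct, but it follows a genuinely different route from the paper. The paper argues by contradiction: it picks points $a_n$ where $\big|1-|\widehat{\psi}_n(a_n)|\big|$ stays bounded away from $0$, translates, extracts a weak limit on a compact set $K\times\T_\ell$, uses Rellich--Kondrachov to upgrade to strong $H^1$ convergence towards a constant of modulus one, and then transfers this to $\widehat{\psi}_n$ via the one-dimensional Sobolev embedding to reach a contradiction. You instead give a direct, quantitative argument: the identity $1-|\widehat{\psi}_n(x)|^2=\frac{1}{\ell}\int_{\T_\ell}(1-|\psi_n|^2)+\frac{1}{\ell}\int_{\T_\ell}|w_n|^2$ (valid because $w_n(x,\cdot)$ has zero mean, which kills the cross term), combined with Poincar\'e--Wirtinger on $\T_\ell$ and on $B(a,r)$ and the one-dimensional embedding $H^1(B(a,r))\hookrightarrow L^\infty(B(a,r))$, yields the uniform bound $\sup_{x}\big|1-|\widehat{\psi}_n(x)|\big|\leq C_{r,\ell}\,\sqrt{\epsilon_n}$ along the subsequence where $\epsilon_n\to 0$. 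All the constants are indeed uniform in $a$ by translation invariance, and passing from $|\mu_n(a)|^2=1+O(\sqrt{\epsilon_n})$ to $|\mu_n(a)|=1+O(\sqrt{\epsilon_n})$ is harmless since $|t-1|\leq|t^2-1|$ for $t\geq 0$. Your approach avoids any compactness extraction beyond the initial choice of subsequence and produces an explicit rate in terms of the vanishing quantity, which the paper's soft argument does not; the paper's compactness scheme, on the other hand, is the one that is reused and adapted elsewhere (e.g.\ in the proofs of Lemmas~\ref{lem:lions} and~\ref{lem:farfromone}), which explains its phrasing. Either proof is acceptable.
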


In the sequel, we make use of the following decomposition based on the splitting in~\eqref{eq:decomp1d}
\begin{equation}
\label{eq:decompEw}
E(\psi_n) = E(\widehat{\psi}_n) + \frac{1}{2 \ell} \int_\R \int_{\T_\ell} \bigg( |\nabla w_n|^2 + 2 \langle w_n, \widehat{\psi}_n
\rangle_\C^2 - |w_n|^2 \big( 1 - |\widehat{\psi}_n|^2 \big) + 2 \langle w_n, \widehat{\psi}_n \rangle_\C \, |w_n|^2 + \frac{1}{2} |w_n|^4 \bigg).
\end{equation}
For a vanishing sequence, it is expected that the super-quadratic terms in the right-hand side of~\eqref{eq:decompEw} tend to $0$ in the limit $n \to \infty$. Invoking Lemma~\ref{lem:conv-const} we can similarly simplify the expression for the momenta $[P](\psi_n)$ in~\eqref{eq:defP2D}. More precisely, we prove 

\begin{lem}
\label{lem:lions}
Let $(\psi_n)_{n \geq 0}$ in $X(\R \times \T_\ell)$ be a vanishing sequence such that $\sup_{n \geq 0} E(\psi_n) < + \infty.$
Up to a subsequence, we have
\begin{equation}
\label{eq:retraite}
E(\psi_n) = E(\widehat{\psi}_n) + \frac{1}{2 \ell} \int_\R \int_{\T_\ell} \Big( |\nabla w_n|^2 + 2 \langle w_n,
\widehat{\psi}_n \rangle_\C^2 \Big) + o(1),
\end{equation}
and in~\eqref{eq:defP2D},
\begin{equation}
\label{eq:tauxplein}
\frac{1}{2 \ell} \int_\R \int_{\T_\ell} \langle i \partial_x w_n, w_n \rangle_\C = \frac{1}{\ell} \int_\R \int_{\T_\ell}
\frac{1}{|\widehat{\psi}_n|^2} \, \langle w_n, \widehat{\psi}_n \rangle_\C \, \langle i \partial_x w_n , \widehat{\psi}_n
\rangle_\C + o(1),
\end{equation}
as $n \to \infty$.
\end{lem}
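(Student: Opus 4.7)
The plan is to first upgrade the vanishing hypothesis into $L^p$-smallness of $w_n$ for $p>2$, and then insert this smallness into both identities. After extracting a subsequence via Lemma~\ref{lem:conv-const}, we have $\|1-|\widehat{\psi}_n|\|_{L^\infty(\R)} \to 0$; in particular $|\widehat{\psi}_n|\geq 1/2$ on $\R$ for $n$ large. Combining the energy bound $\sup_n E(\psi_n) < \infty$ with $\partial_y w_n = \partial_y \psi_n$ and the Poincaré inequality on $\T_\ell$ (recall that $\int_{\T_\ell} w_n(x,\cdot)\,dy=0$) yields a uniform bound of $w_n$ in $H^1(\R \times \T_\ell)$. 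The same Poincaré inequality transfers the vanishing of $\int_{B(a,r)\times \T_\ell} e(\psi_n)$ into $\sup_{a \in \R} \int_{B(a,r)\times \T_\ell} |w_n|^2 \to 0$. A standard Lions-type concentration argument in dimension two (using a finite-overlap cylindrical covering and Gagliardo--Nirenberg on each cylinder) then gives $\|w_n\|_{L^p(\R \times \T_\ell)} \to 0$ for every $2 < p < \infty$.

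For \eqref{eq:retraite}, I would show that the last three terms in the integrand of \eqref{eq:decompEw} are $o(1)$. The term $\int |w_n|^2(1-|\widehat{\psi}_n|^2)$ is bounded by $\|1-|\widehat{\psi}_n|^2\|_{L^\infty} \|w_n\|_{L^2}^2 \to 0$ thanks to Lemma~\ref{lem:conv-const}, the cubic term by $\|\widehat{\psi}_n\|_{L^\infty} \|w_n\|_{L^3}^3$, and the quartic term by $\|w_n\|_{L^4}^4$, both of which tend to $0$ by the $L^p$-smallness above.

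For \eqref{eq:tauxplein}, I would set $f_n := w_n / \widehat{\psi}_n = g_n + i h_n$ with $g_n, h_n$ real-valued; the quotient is well-defined by the lower bound on $|\widehat{\psi}_n|$, and one checks $\langle w_n, \widehat{\psi}_n\rangle_\C = |\widehat{\psi}_n|^2 g_n$. Expanding $i\partial_x w_n = (i\partial_x f_n)\widehat{\psi}_n + if_n \partial_x \widehat{\psi}_n$ and pairing appropriately yields after a direct algebraic manipulation the pointwise identity
\[
\tfrac{1}{2}\langle i\partial_x w_n, w_n\rangle_\C - \tfrac{1}{|\widehat{\psi}_n|^2}\langle w_n, \widehat{\psi}_n\rangle_\C \langle i\partial_x w_n, \widehat{\psi}_n\rangle_\C = \tfrac{1}{2}\partial_x\big(|\widehat{\psi}_n|^2 g_n h_n\big) + \tfrac{1}{2}(h_n^2 - g_n^2)\langle i\partial_x \widehat{\psi}_n, \widehat{\psi}_n\rangle_\C.
\]
Integrating over $\R \times \T_\ell$, the first right-hand side term vanishes since the $x$-function $x \mapsto |\widehat{\psi}_n(x)|^2 \int_{\T_\ell}(g_n h_n)(x,y)\,dy$ belongs to $W^{1,1}(\R) \hookrightarrow C_0(\R)$ (the $L^1$ control on the function itself follows from $|g_n h_n|\leq |f_n|^2/2 \lesssim |w_n|^2$, and the $L^1$ control on its $x$-derivative follows from Cauchy--Schwarz combined with the $H^1$ bounds on $w_n$ and $\widehat{\psi}_n$). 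The second term, after integration in $y$, is bounded via Cauchy--Schwarz by $\|\partial_x \widehat{\psi}_n\|_{L^2}\,\|\widehat{\psi}_n\|_{L^\infty}\,\|w_n\|_{L^4}^2 \to 0$.

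The main obstacle is the passage from the integral-geometric vanishing of the energy density to the $L^p$-vanishing of $w_n$ for $p > 2$; here one crucially combines the compactness of the transverse direction $\T_\ell$ (via Poincaré, to convert a $\nabla$-smallness on strips into $|w_n|$-smallness on strips) with a Lions-type concentration argument in the cylindrical geometry. The other delicate point is the algebraic manipulation underlying \eqref{eq:tauxplein}: one must carefully track the cancellations so that the residual factors cleanly into a total $x$-derivative plus a quartic remainder, which are the only two shapes that can be absorbed by the smallness of $w_n$.
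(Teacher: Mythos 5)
Your proposal is correct and follows essentially the same route as the paper: the heart of both arguments is the Lions-type estimate $\|w_n\|_{L^4(\R\times\T_\ell)}\to 0$ obtained from the Poincar\'e--Wirtinger inequality on $\T_\ell$ and the Gagliardo--Nirenberg inequality on a covering by cylinders, after which the super-quadratic terms in~\eqref{eq:decompEw} are absorbed. For~\eqref{eq:tauxplein}, your substitution $w_n = f_n\widehat{\psi}_n$ with $f_n = g_n + i h_n$ is just a reparametrization of the paper's decomposition of $w_n$ along $\widehat{\psi}_n$ and $i\widehat{\psi}_n$; your exact-derivative identity (which I checked) and the vanishing of $\int_\R \partial_x\big(|\widehat{\psi}_n|^2\int_{\T_\ell} g_n h_n\big)$ correspond precisely to the paper's integration by parts producing the remainder $\gR_n$, and your bound $\|\partial_x\widehat{\psi}_n\|_{L^2}\,\|w_n\|_{L^4}^2$ on the residual term is the same as the paper's.
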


Inspection of the last integral term in~\eqref{eq:tauxplein} shows that it can be controlled by the last one
in~\eqref{eq:retraite} provided that the modulus $|\widehat{\psi}_n|$ is controlled from below. A similar property
holds for the term $[P](\widehat{\psi}_n)$ in~\eqref{eq:defP2D}, which can be bounded by the energy
$E(\widehat{\psi}_n)$, under a similar control on the modulus $|\widehat{\psi}_n|$. Combined, these lead to

\begin{lem}
\label{lem:farfromone}
Let $(\psi_n)_{n \geq 0}$ in $X(\R \times \T_\ell)$ be a vanishing sequence such that $\sup_{n \geq 0} E(\psi_n) < + \infty$. Up to a subsequence we have 
$$
\delta_n := \inf_{x \in \R} |\widehat{\psi}_n(x)| > 0,
$$
and 
\begin{equation}
\label{eq:contre-64}
\big| [P](\psi_n) \big| \leq \frac{E(\psi_n)}{\sqrt{2} \, \delta_n} + o(1),
\end{equation}
as $n \to \infty$.
\end{lem}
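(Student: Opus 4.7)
The plan is to extract a subsequence along which $|\widehat{\psi}_n|$ converges uniformly to $1$ on $\R$, and then to bound separately the two contributions to $[P](\psi_n)$ entering the decomposition~\eqref{eq:defP2D}. Applying Lemma~\ref{lem:conv-const}, I first pass to a subsequence (still denoted $(\psi_n)_{n \geq 0}$) along which $\sup_{x \in \R} | 1 - |\widehat{\psi}_n(x)| | \to 0$. This forces $\delta_n \to 1$, and in particular $\delta_n > 0$ for $n$ large enough, which establishes the first assertion. Up to a further extraction, I may also assume that the conclusions~\eqref{eq:retraite} and~\eqref{eq:tauxplein} of Lemma~\ref{lem:lions} hold.

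For the one-dimensional contribution $[P](\widehat{\psi}_n)$, since $\widehat{\psi}_n$ is bounded away from zero, I would lift it globally as $\widehat{\psi}_n = \rho_n e^{i \theta_n}$ with $\rho_n = |\widehat{\psi}_n| \geq \delta_n$. A direct computation starting from the definition of $[P]$ produces the real representative
$$p_n^{(1)} := \frac{1}{2} \int_\R \big( 1 - \rho_n^2 \big) \, \theta_n'\, dx$$
of $[P](\widehat{\psi}_n)$. The pointwise Young inequality $2 a b \leq a^2 + b^2$, applied with $a = \rho_n |\theta_n'|$ and $b = |1 - \rho_n^2|/\sqrt{2}$, gives $\rho_n |\theta_n'| \, |1-\rho_n^2| \leq \sqrt{2}\, e(\widehat{\psi}_n)$. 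Dividing by $\rho_n \geq \delta_n$ and integrating then yields $|p_n^{(1)}| \leq E(\widehat{\psi}_n)/(\sqrt{2}\,\delta_n)$.

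For the two-dimensional correction
$$p_n^{(2)} := \frac{1}{2 \ell} \int_\R \int_{\T_\ell} \langle i \partial_x w_n, w_n \rangle_\C,$$
I would use the alternative expression supplied by~\eqref{eq:tauxplein}. The elementary estimates $|\langle w_n, \widehat{\psi}_n \rangle_\C| \leq |w_n| \, |\widehat{\psi}_n|$ and $|\langle i \partial_x w_n, \widehat{\psi}_n \rangle_\C| \leq |\partial_x w_n| \, |\widehat{\psi}_n|$, together with $|\widehat{\psi}_n|^{-1} \leq \delta_n^{-1}$, reduce $|p_n^{(2)}|$ to
$$\frac{1}{\delta_n \, \ell} \int_\R \int_{\T_\ell} |\langle w_n, \widehat{\psi}_n \rangle_\C| \, |\partial_x w_n| + o(1).$$
A second Young inequality, in the form
$$|\langle w_n, \widehat{\psi}_n \rangle_\C| \, |\partial_x w_n| \leq \frac{1}{\sqrt{2}} \Big( \tfrac{1}{2} |\partial_x w_n|^2 + \langle w_n, \widehat{\psi}_n \rangle_\C^2 \Big),$$
combined with the identity~\eqref{eq:retraite}, then gives $|p_n^{(2)}| \leq \big( E(\psi_n) - E(\widehat{\psi}_n) \big) / (\sqrt{2}\, \delta_n) + o(1)$.

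Summing the two estimates telescopes the energetic contributions into $E(\psi_n)$ and yields~\eqref{eq:contre-64}, after noting that $p_n^{(1)} + p_n^{(2)}$ is a real representative of $[P](\psi_n)$, so that the triangle inequality transfers the bound to the $\R/\pi\Z$-distance. The main delicacy is to pick the two Young inequalities with matching constants so that the factors $E(\widehat{\psi}_n)$ and $E(\psi_n) - E(\widehat{\psi}_n)$ recombine exactly into $E(\psi_n)/(\sqrt{2}\delta_n)$ with no spurious multiplicative loss; this is what dictates the specific splitting $\tfrac{1}{2} |\partial_x w_n|^2 + \langle w_n, \widehat{\psi}_n \rangle_\C^2$ appearing in the second Young inequality, so that it matches the quadratic part of the energy identity~\eqref{eq:retraite}.
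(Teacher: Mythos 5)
Your argument is correct and follows essentially the same route as the paper's proof: lift $\widehat{\psi}_n$ using the lower bound on its modulus provided by Lemma~\ref{lem:conv-const}, represent $[P](\widehat{\psi}_n)$ by $\frac{1}{2}\int_\R (1-\rho_n^2)\partial_x\varphi_n$, and apply the two Young inequalities with exactly the weights that make the quadratic terms recombine with~\eqref{eq:retraite} and~\eqref{eq:tauxplein} into $E(\psi_n)/(\sqrt{2}\,\delta_n)$. The only cosmetic difference is that the paper multiplies both integrals by $\sqrt{2}\,\delta_n$ before applying Young, whereas you divide by $\rho_n \geq \delta_n$ pointwise; the estimates are identical.
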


We are now in position to state and prove

\begin{prop}
\label{prop:nonvanishing}
A minimizing sequence $(\psi_n)_{n \geq 0}$ for $\boI_\textup{2d}(p)$ cannot be vanishing.
\end{prop}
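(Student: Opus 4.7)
The plan is to argue by contradiction, combining the three lemmas \ref{lem:conv-const}, \ref{lem:lions} and \ref{lem:farfromone} with the strict upper bound in Lemma~\ref{lem:prop-I-l}. Assume that $(\psi_n)_{n \geq 0}$ is a vanishing minimizing sequence for $\boI_\textup{2d}(\gp)$. Since $\gp \neq [0]$, one has $\boI_\textup{2d}(\gp) > 0$, and since the sequence is minimizing, the energies $E(\psi_n)$ are uniformly bounded. This puts us exactly in the setting required to apply Lemma~\ref{lem:farfromone}.

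Extracting a subsequence if necessary, Lemma~\ref{lem:farfromone} provides numbers $\delta_n := \inf_{x \in \R} |\widehat{\psi}_n(x)| > 0$ satisfying the key estimate
$$
\big| [P](\psi_n) \big| \leq \frac{E(\psi_n)}{\sqrt{2} \, \delta_n} + o(1).
$$
On the other hand, Lemma~\ref{lem:conv-const}, applied along a further subsequence, forces $\sup_{x \in \R} \big| 1 - |\widehat{\psi}_n(x)| \big| \to 0$, and hence $\delta_n \to 1$. Since the left-hand side of the estimate above equals $|\gp|$ for every $n$, and since $E(\psi_n) \to \boI_\textup{2d}(\gp)$, passing to the limit gives
$$
|\gp| \leq \frac{\boI_\textup{2d}(\gp)}{\sqrt{2}}, \quad \text{i.e.} \quad \boI_\textup{2d}(\gp) \geq \sqrt{2} \, |\gp|.
$$

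This directly contradicts the strict inequality $\boI_\textup{2d}(\gp) \leq \boI_\textup{1d}(\gp) < \sqrt{2} \, |\gp|$ established in Lemma~\ref{lem:prop-I-l} for $\gp \neq [0]$. Therefore no such vanishing minimizing sequence can exist. The genuine difficulty in this non-vanishing step was not in the contradiction itself but in the preliminary analysis already carried out: identifying that a vanishing sequence forces $|\widehat{\psi}_n| \to 1$ uniformly, controlling the cross-term between $w_n$ and $\widehat{\psi}_n$ in the momentum identity~\eqref{eq:tauxplein} by means of the one-dimensional energy via Cauchy--Schwarz, and upgrading this into the explicit bound~\eqref{eq:contre-64}. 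Once these ingredients are in place, the gain strictly below $\sqrt{2}\,|\gp|$ for the two-dimensional minimizing energy (which itself is inherited from the one-dimensional soliton) immediately rules out vanishing.
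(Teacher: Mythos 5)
Your proposal is correct and follows essentially the same route as the paper: assume vanishing, use Lemma~\ref{lem:conv-const} to get $\delta_n \to 1$, apply the bound~\eqref{eq:contre-64} from Lemma~\ref{lem:farfromone} to deduce $\sqrt{2}\,|\gp| \leq \liminf_n E(\psi_n) = \boI_\textup{2d}(\gp)$, and contradict the strict inequality $\boI_\textup{2d}(\gp) < \sqrt{2}\,|\gp|$ of Lemma~\ref{lem:prop-I-l}. Your closing remark correctly identifies that the substance lies in Lemmas~\ref{lem:conv-const}--\ref{lem:farfromone} rather than in this final contradiction.
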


Indeed, assume by contradiction that it is vanishing, and set $\boE = \liminf_{n \to \infty} E(\psi_n)$ (which is finite since $\psi_n$ is assumed to be minimizing). From Lemma~\ref{lem:conv-const} we derive that
$$
\delta_n := \inf_{x \in \R} |\widehat{\psi}_n(x)| \to 1,
$$
as $n \to \infty$. Moreover, we can assume, up to a subsequence, that the energies $E(\psi_n)$ tend to $\boE$, so that they are uniformly bounded. Therefore, we deduce from the fact that $[P](\psi_n) = [p]$,~\eqref{eq:estim-sup-I} and~\eqref{eq:contre-64} that
$$
\boI_\textup{2d}(p) < \sqrt{2} \, \big| [p] \big| \leq \boE.
$$
This is in contradiction with the fact that $(\psi_n)_{n \geq 0}$ is minimizing, and therefore completes the proof of Proposition~\ref{prop:nonvanishing}.

The non-vanishing property allows us to extract a limiting profile $\psi_\infty$ after a suitable translation (we will eventually prove it is non-constant). By Proposition~\ref{prop:nonvanishing}, and the invariance of the energy and the momentum under translation, we may assume that there exists $\delta_\infty > 0$ such that up to a subsequence
\begin{equation}
\label{eq:massezero}
\frac{1}{\ell} \int_{B(0, 1)} \int_{\T_\ell} e(\psi_n) \geq \delta_\infty > 0, 
\end{equation}
for any $n \geq 0.$ By boundedness of the energies $E(\psi_n)$, we can extract a further subsequence and a limiting profile $\psi_\infty \in X(\R \times \T_\ell)$ such that 
\begin{equation}
\label{eq:convfaibleprofil}
\nabla \psi_n \rightharpoonup \nabla \psi_\infty, \quad \text{ and } \quad 1 - |\psi_n|^2 \rightharpoonup 1 - |\psi_\infty|^2 \quad \text{ in } L^2(\R \times \T_\ell), 
\end{equation}
and
\begin{equation}
\label{eq:convfortprofil}
\psi_n \to \psi_\infty \quad \text{ in } L_\text{loc}^p(\R \times \T_\ell), 
\end{equation}
for any $1 \leq p < + \infty$. We decompose once more the functions $\psi_n$ and $\psi_\infty$ as
$$
\psi_n = \widehat{\psi}_n + w_n, \quad \text{ and } \quad \psi_\infty = \widehat{\psi}_\infty + w_\infty.
$$
By the Sobolev embedding theorem, we know that $\widehat{\psi}_n \to \widehat{\psi}_\infty$ locally uniformly on $\R$. In a concentration-compactness framework, it would then be classical to consider the difference $\widetilde{\psi}_n := \psi_n - \psi_\infty$ and to try to prove that
$$
E(\psi_n) \geq E(\psi_\infty) + E(\widetilde{\psi}_n) + o(1), 
$$
and
$$
[P](\psi_n) = [P](\psi_\infty) + [P](\widetilde{\psi}_n) + o(1),
$$
as $n \to \infty$. We cannot argue exactly this way because our functional space is not a vector space and the quantities $E(\widetilde{\psi}_n)$ and $[P](\widetilde{\psi}_n)$ simply do not make any sense. For $m \geq 1$, we shall instead construct functions $\widetilde{\psi}_n$ in the energy space, which will correspond to $\psi_n$ without its localized bump.

In the sequel, given a number $R > 0$ and a function $\psi = \widehat{\psi} + w \in X(\R \times \T_\ell)$, we shall use the notations
\begin{equation}
\label{def:E_R}
E_R \big( \psi \big) = \frac{1}{\ell} \int_{- R}^R \int_{\T_\ell} e(\psi)(x, y) \, dx \, dy, 
\end{equation}
and, provided that $\widehat{\psi}(\pm R) \neq 0$,
\begin{equation}
\label{def:P_R}
[P_R] \big( \psi \big) = [P_R] \big( \widehat{\psi} \big) + \frac{1}{2 \ell} \int_{- R}^R \int_{\T_\ell} \big\langle i \partial_x w(x, y), w(x, y) \big\rangle_\C \, dx \, dy,
\end{equation}
where
$$
[P_R] \big( \widehat{\psi} \big) := \frac{1}{2} \int_{- R}^R \big\langle i \partial_x \widehat{\psi}, \widehat{\psi} \big\rangle_\C + \frac{1}{2} \textup{arg}\big( \widehat{\psi}(R) \big) - \frac{1}{2} \textup{arg} \big( \widehat{\psi}(- R) \big) \quad \text{ modulo } \pi.
$$
We also define the complementary quantities 
$$
E_{R^c}(\psi) := E(\psi) - E_R(\psi), \quad \text{ and } \quad [P_{R^c}](\psi) = [P](\psi) - [P_R](\psi).
$$
Since $\widehat{\psi}_\infty$ is in $X(\R)$, we can find some number $R_0 >0$ such that $|\widehat{\psi}_\infty(x)| \geq 1/2$ whenever $|x| \geq R_0$. In particular, the quantity $[P_R](\psi_\infty)$ is well-defined for $R \geq R_0$. Given any integer $m \geq 1$, we choose a number $R_m \geq \max \{ R_0, m + 2 \}$ such that
\begin{equation}
\label{eq:uri}
E_{R_m^c}(\psi_\infty) \leq \frac{1}{m}, \quad \text{and} \quad \sup_{R \geq R_m}\big| [P_{R^c}](\psi_\infty) \big| \leq \frac{1}{m}.
\end{equation}
For fixed $m \geq 1$, using the convergences in~\eqref{eq:convfaibleprofil},~\eqref{eq:convfortprofil} and the local uniform convergence of the functions $\widehat{\psi}_n$, we obtain that
$$
\liminf_{n \to \infty} E_{R_m}(\psi_n) \geq E_{R_m}(\psi_\infty),
$$
and
$$
\lim_{n \to \infty} \sup_{R_m \leq R \leq 2 R_m} \big|[P_R](\psi_n) - [P_R](\psi_\infty)\big| = 0.
$$
Using a diagonal argument, we may thus extract a subsequence $(\psi_{n_m})_{m \geq 1}$ such that
\begin{equation}
\label{eq:grisons}
E_{R_m}(\psi_{n_p}) \geq E_{R_m}(\psi_\infty) - \frac{1}{m}, \quad \text{ and }\quad \sup_{R_m \leq R \leq 2 R_m} \big| [P_R](\psi_{n_p}) - [P_R](\psi_\infty) \big| \leq \frac{1}{m},
\end{equation}
for any $p \geq m$. Using a pigeonhole type argument, we next prove

\begin{lem}
\label{lem:vaud}
Given any integer $m \geq 1$, there exists a number $\widetilde{R}_m \in [R_m, 2 R_m]$ and a function $\widetilde{\psi}_{n_m} \in H^1([- \widetilde{R}_m, \widetilde{R}_m] \times \T_\ell, \C)$ such that
$$
\widetilde{\psi}_{n_m} \big( \pm \widetilde{R}_m, \cdot \big) = \psi_{n_m} \big( \pm \widetilde{R}_m, \cdot \big),
$$
$$
E_{\widetilde{R}_m} \big( \widetilde{\psi}_{n_m} \big) \leq \frac{C_\ell}{R_m}, \quad \text{ and } \quad \Big| [P_{\widetilde{R}_m}] \big( \widetilde{\psi}_{n_m} \big) \Big| \leq \frac{C_\ell}{R_m^\frac{1}{2}},
$$
for some number $C_\ell >0$, depending only on $\ell$.
\end{lem}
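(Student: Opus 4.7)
The approach is a pigeonhole argument combined with an explicit extension. Since $\sup_n E(\psi_n) < \infty$, Fubini yields
\[
\int_{R_m}^{2R_m} \frac{1}{\ell} \int_{\T_\ell} \bigl(e(\psi_{n_m})(x, y) + e(\psi_{n_m})(-x, y)\bigr)\, dy\, dx \leq C,
\]
so on a subset of $[R_m, 2R_m]$ of measure at least $R_m/2$ one has $\frac{1}{\ell} \int_{\T_\ell} e(\psi_{n_m})(\pm r, y)\, dy \leq C/R_m$. I would pick any such $\widetilde R_m$ on which the traces $g_\pm(\cdot) := \psi_{n_m}(\pm \widetilde R_m, \cdot)$ are in $H^1(\T_\ell)$; then $\|\partial_y g_\pm\|_{L^2}^2 + \|1 - |g_\pm|^2\|_{L^2}^2 \leq C\ell/R_m$. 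Setting $a_\pm := \widehat{\psi}_{n_m}(\pm \widetilde R_m)$, a short computation using Poincaré and Cauchy-Schwarz gives $|1 - |a_\pm|^2| \lesssim_\ell R_m^{-1/2}$, and the Sobolev embedding $H^1(\T_\ell) \hookrightarrow L^\infty(\T_\ell)$ gives $\|g_\pm - \beta_\pm\|_{L^\infty} \lesssim_\ell R_m^{-1/2}$, where $\beta_\pm := a_\pm/|a_\pm|$ is well-defined for $R_m$ large.

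Next I would pick representatives $\theta_\pm$ of $\arg(a_\pm)$ with $|\theta_+ - \theta_-| \leq \pi$, let $\theta$ be the affine function on $[-\widetilde R_m, \widetilde R_m]$ with $\theta(\pm \widetilde R_m) = \theta_\pm$, and fix a smooth cutoff $\chi \in C_c^\infty(\R)$ with $\chi(0) = 1$ and $\operatorname{supp}(\chi) \subset [-1, 1]$. Setting $\chi_\pm(x) := \chi(x \mp \widetilde R_m)$ (disjoint supports for $R_m$ large), define
\[
\widetilde\psi_{n_m}(x, y) := e^{i\theta(x)} + \chi_+(x)\bigl(g_+(y) - \beta_+\bigr) + \chi_-(x)\bigl(g_-(y) - \beta_-\bigr).
\]
The trace condition $\widetilde\psi_{n_m}(\pm \widetilde R_m, \cdot) = g_\pm$ is immediate because $e^{i\theta(\pm \widetilde R_m)} = \beta_\pm$. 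On the bulk $\{|x| \leq \widetilde R_m - 1\}$ the function is simply the unimodular $e^{i\theta(x)}$ with $|\theta'|^2 \lesssim R_m^{-2}$, which contributes $\lesssim R_m^{-1}$ to the energy; on each boundary layer of width at most one, expanding $|\widetilde\psi_{n_m}|^2$ and using the above $L^2$ and $L^\infty$ estimates on $g_\pm - \beta_\pm$ yields both $\int |\nabla \widetilde\psi_{n_m}|^2$ and $\int (1 - |\widetilde\psi_{n_m}|^2)^2$ of size $\lesssim_\ell R_m^{-1}$. Summing gives $E_{\widetilde R_m}(\widetilde\psi_{n_m}) \leq C_\ell/R_m$.

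For the momentum, I would decompose according to~\eqref{def:P_R}: with $w := \widetilde\psi_{n_m} - \widehat{\widetilde\psi}_{n_m} = \chi_+(g_+ - a_+) + \chi_-(g_- - a_-)$, the cross bulk integral $\iint \langle i \partial_x w, w\rangle_\C$ vanishes identically because $\chi_+ \chi_- \equiv 0$ and $\langle i v, v\rangle_\C = 0$ for every $v \in \C$. For the remaining $[P_{\widetilde R_m}](\widehat{\widetilde\psi}_{n_m})$-term, writing $\widehat{\widetilde\psi}_{n_m} = \rho\, e^{i\vartheta}$ in polar form (valid since its modulus stays close to one) and integrating by parts one obtains
\[
[P_{\widetilde R_m}](\widehat{\widetilde\psi}_{n_m}) = \frac{1}{2} \int_{-\widetilde R_m}^{\widetilde R_m} \vartheta'(x)\bigl(1 - \rho(x)^2\bigr)\, dx.
\]
The integrand vanishes in the bulk (where $\rho \equiv 1$), and in each boundary layer one checks directly that $|1 - \rho^2|, |\vartheta'| \lesssim_\ell R_m^{-1/2}$, giving a total contribution $\lesssim_\ell R_m^{-1}$, well below the announced $C_\ell/R_m^{1/2}$. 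The main obstacle is engineering the boundary correction so that it \emph{simultaneously} satisfies the trace condition and preserves a genuinely unimodular bulk, making both the potential energy and the polar-identity integrand vanish there; writing the correction as $\chi_\pm(g_\pm - \beta_\pm)$ with the unit-modulus $\beta_\pm$ (rather than the non-unimodular $a_\pm$) is precisely what enables this combination.
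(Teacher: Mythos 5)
Your proof is correct, and while it opens with the same pigeonhole selection of $\widetilde{R}_m$ as the paper (choosing a slice where the trace energy is $O(1/R_m)$), the connecting construction is genuinely different. The paper first proves a lifting lemma for low-energy traces ($\psi_\pm = \rho_\pm e^{i\varphi_\pm}$ with quantitative control on $|\widehat{\psi}_\pm|$ and on $\arg(\widehat{\psi}_\pm) - \frac{1}{\ell}\int\varphi_\pm$), and then builds $\Psi = \rho\, e^{i\varphi}$ by affinely interpolating the modulus to $1$ and the phase to its mean over unit-width layers, with an affine phase ramp across the bulk; the momentum estimate then requires tracking the term $\int (1-\rho^2)\partial_x\varphi$ together with the $\arg$ corrections, which is where the paper only gets $O(\kappa^{1/2}) = O(R_m^{-1/2})$. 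Your additive ansatz $e^{i\theta(x)} + \chi_\pm(x)(g_\pm - \beta_\pm)$ avoids lifting the traces altogether (you only need the mean values $a_\pm$ to be close to the unit circle, not the full traces to be non-vanishing), and it buys a cleaner momentum computation: the $w$-contribution vanishes identically by the disjoint-support and $\langle iv,v\rangle_\C = 0$ observations, and the remaining one-dimensional term $\frac{1}{2}\int \vartheta'(1-\rho^2)$ is supported in the two unit layers where both factors are $O(R_m^{-1/2})$, yielding $O(R_m^{-1})$ — strictly better than the $O(R_m^{-1/2})$ claimed in the lemma, which of course suffices. The only points to make explicit in a final write-up are (i) the restriction to $m$ large enough (equivalently a further subsequence) so that $a_\pm \neq 0$ and the supports of $\chi_\pm$ are disjoint — the paper makes the same reduction via the condition $M_p/R_m \leq \kappa_\ell$ — and (ii) the $L^\infty$ control $\|g_\pm - \beta_\pm\|_{L^\infty} \lesssim_\ell R_m^{-1/2}$ needed for the quartic part of the potential term, which you correctly obtain from the one-dimensional Sobolev embedding on $\T_\ell$.
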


We then extend the function $\widetilde{\psi}_{n_m}$ to $\R \times \T_\ell$ by being equal to $\psi_{n_m}$ outside $[-\widetilde{R}_m, \widetilde{R}_m] \times \T_\ell$. We claim

\begin{prop}
\label{prop:tessin}
In the limit $n \to \infty$, we have
\begin{equation}\label{eq:tessin0}
E \big( \psi_{n_m} \big) \geq \max \big\{ E \big( \psi_\infty \big), \delta_\infty \big\} + E \big( \widetilde{\psi}_{n_m} \big) + o(1),
\end{equation}
and
\begin{equation}\label{eq:tessin1}
[P] \big( \psi_{n_m} \big) = [P] \big( \psi_\infty \big) + [P] \big( \widetilde{\psi}_{n_m} \big) + o(1),
\end{equation}
where $\delta_\infty > 0$ is the number appearing in~\eqref{eq:massezero}.
\end{prop}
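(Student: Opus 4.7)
The key observation is that $\widetilde{\psi}_{n_m}$ coincides with $\psi_{n_m}$ on $\{|x| \geq \widetilde{R}_m\} \times \T_\ell$ by construction, so the horizontal averages $\widehat{\psi}_{n_m}$ and $\widehat{\widetilde{\psi}}_{n_m}$ agree there as well. Decomposing $E = E_{\widetilde{R}_m} + E_{\widetilde{R}_m^c}$ and $[P] = [P_{\widetilde{R}_m}] + [P_{\widetilde{R}_m^c}]$ from the definitions~\eqref{def:E_R}--\eqref{def:P_R}, the tail contributions cancel in the differences:
\begin{align*}
E(\psi_{n_m}) - E(\widetilde{\psi}_{n_m}) &= E_{\widetilde{R}_m}(\psi_{n_m}) - E_{\widetilde{R}_m}(\widetilde{\psi}_{n_m}),\\
[P](\psi_{n_m}) - [P](\widetilde{\psi}_{n_m}) &= [P_{\widetilde{R}_m}](\psi_{n_m}) - [P_{\widetilde{R}_m}](\widetilde{\psi}_{n_m}) \ \text{ modulo } \pi.
\end{align*}
The boundary phase contributions $\arg(\widehat{\psi}(\pm\widetilde{R}_m))$ in $[P_{\widetilde{R}_m}]$ are unambiguous because $|\widehat{\psi}_\infty| \geq 1/2$ for $|x| \geq R_m$, which via the local uniform convergence of the $\widehat{\psi}_{n_m}$ ensures $|\widehat{\psi}_{n_m}(\pm\widetilde{R}_m)|$ is bounded away from zero for $m$ large. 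Moreover these phase contributions match exactly in both differences since $\widetilde{\psi}_{n_m}(\pm\widetilde{R}_m, \cdot) = \psi_{n_m}(\pm\widetilde{R}_m, \cdot)$.

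For~\eqref{eq:tessin0}, the plan is to establish two lower bounds for $E_{\widetilde{R}_m}(\psi_{n_m})$. First, using monotonicity $E_{\widetilde{R}_m}(\psi_{n_m}) \geq E_{R_m}(\psi_{n_m})$ (since $\widetilde{R}_m \geq R_m$), combined with~\eqref{eq:grisons} at $p = m$ and~\eqref{eq:uri},
$$
E_{\widetilde{R}_m}(\psi_{n_m}) \geq E_{R_m}(\psi_\infty) - \tfrac{1}{m} \geq E(\psi_\infty) - \tfrac{2}{m}.
$$
Second, since $R_m \geq m+2 \geq 3$, the set $B(0,1) \times \T_\ell$ lies inside $[-\widetilde{R}_m, \widetilde{R}_m] \times \T_\ell$, so the non-vanishing bound~\eqref{eq:massezero} gives $E_{\widetilde{R}_m}(\psi_{n_m}) \geq \delta_\infty$. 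Combined with the upper bound $E_{\widetilde{R}_m}(\widetilde{\psi}_{n_m}) \leq C_\ell/R_m$ from Lemma~\ref{lem:vaud}, taking the larger of the two lower bounds yields~\eqref{eq:tessin0} as $m \to \infty$.

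For~\eqref{eq:tessin1}, I would combine the estimate $|[P_{\widetilde{R}_m}](\widetilde{\psi}_{n_m})| \leq C_\ell/R_m^{1/2}$ from Lemma~\ref{lem:vaud} with the convergence $[P_{\widetilde{R}_m}](\psi_{n_m}) \to [P](\psi_\infty)$ in $\R/\pi\Z$. The latter follows from~\eqref{eq:grisons} applied with $R = \widetilde{R}_m \in [R_m, 2R_m]$, which yields $|[P_{\widetilde{R}_m}](\psi_{n_m}) - [P_{\widetilde{R}_m}](\psi_\infty)| \leq 1/m$, together with $|[P](\psi_\infty) - [P_{\widetilde{R}_m}](\psi_\infty)| = |[P_{\widetilde{R}_m^c}](\psi_\infty)| \leq 1/m$ from~\eqref{eq:uri}.

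The main subtlety throughout is the modulo-$\pi$ nature of the momentum: one must check that all additive identities and error bounds are consistent with the quotient $\R/\pi\Z$. This is precisely where the choice $R_m \geq R_0$ (guaranteeing $|\widehat{\psi}_\infty|$ stays bounded away from zero on the tails) and the preservation of boundary values built into Lemma~\ref{lem:vaud} pay off, as they rule out any spurious phase jumps when gluing $\widetilde{\psi}_{n_m}$ to $\psi_{n_m}$ across the cross-sections $\{x = \pm\widetilde{R}_m\}$.
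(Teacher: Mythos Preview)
Your proof is correct and follows essentially the same decomposition-and-estimate strategy as the paper: you split $E$ and $[P]$ into their $[-\widetilde{R}_m,\widetilde{R}_m]$ and complementary parts, use that $\widetilde{\psi}_{n_m}$ agrees with $\psi_{n_m}$ on the complement to cancel the tails, and then feed in~\eqref{eq:massezero},~\eqref{eq:uri},~\eqref{eq:grisons} and Lemma~\ref{lem:vaud} exactly as the paper does.

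One minor imprecision: your justification that $|\widehat{\psi}_{n_m}(\pm\widetilde{R}_m)|$ is bounded away from zero cannot come from local uniform convergence of the $\widehat{\psi}_{n_m}$, since $\widetilde{R}_m\to\infty$ and local uniform convergence only controls values on fixed compact sets. The correct reason is built into the construction of Lemma~\ref{lem:vaud}: the pigeonhole choice of $\widetilde{R}_m$ guarantees $\kappa(\psi_{n_m}(\pm\widetilde{R}_m,\cdot))\leq M_p/R_m\leq\kappa_\ell$, and then estimate~\eqref{eq:mayo} gives $|\widehat{\psi}_{n_m}(\pm\widetilde{R}_m)|\geq 1/2$ directly. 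With that correction, your argument is complete.
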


We are now in position to provide the 

\begin{proof}[Completion of the proof of Theorem~\ref{thm:exist-min}]
Let $[p_\infty] := [P](\psi_\infty)$. A first consequence of Proposition~\ref{prop:tessin} is that $[p_\infty] \neq [0]$. Indeed, it would otherwise follow from~\eqref{eq:tessin1} and~\eqref{eq:tessin0} that
$$
[P] \big(\widetilde{\psi}_{n_m} \big) \to [p], \quad \text{ and } \quad \limsup_{m \to \infty} E \big( \widetilde{\psi}_{n_m} \big) \leq \boI_\textup{2d}(p) - \delta_\infty < \boI_\textup{2d}(p).
$$
This would contradict the definition of $\boI_\textup{2d}(p)$ and the fact that it depends continuously on $p$.

Assume next by contradiction that $[p_\infty] \neq [p]$. Taking limits in~\eqref{eq:tessin1} and~\eqref{eq:tessin0}, we first obtain
$$
[P] \big( \widetilde{\psi}_{n_m} \big) \to [p - p_\infty],
$$
and then
$$
\boI_\textup{2d}(p) \geq \boI_\textup{2d}(p_\infty) + \boI_\textup{2d}(p - p_\infty).
$$
This is not possible in view of the strict sub-additivity proved in Proposition~\ref{prop:prop-I-l}.

As a consequence, we have $[P](\psi_\infty) = [p]$, and we infer from~\eqref{eq:tessin0} that $\psi_\infty$ is a minimizer for $\boI_\textup{2d}(p)$. In other words, we can take $U_{\gp,\ell} = \psi_\infty$ in the statement of Theorem~\ref{thm:exist-min}. Going back to~\eqref{eq:defP2D}, and using~\cite[Lemma 4]{BeGrSaS1}, we next observe that the untwisted momentum $[P]$ is differentiable with respect to perturbations in $H^1(\R \times \T_\ell)$. So is the energy $E$, and the equation for $U_{\gp,\ell}$ in~\eqref{eq:Psi_p} is then a direct consequence of the Euler-Lagrange multiplier theorem. The smoothness of $U_{\gp,\ell}$ finally follows from standard elliptic regularity (see e.g.~\cite[Lemma 2.1]{BetGrSa1}).
\end{proof}

\section{The analysis of the minimizing energy}
\label{sec:energy}

\subsection{Proof of Lemma~\ref{lem:prop-I-l}}

Lemma~\ref{lem:prop-I-l} is exactly~\cite[Lemma 2]{deLGrSm1} up to a suitable scaling. Observe that the minimization problem under consideration in~\cite{deLGrSm1} is indeed defined as
\begin{equation}
\label{def:I-lambda}
\boJ_\lambda([q]) := \inf \big\{ E_\lambda(\psi) : \psi \in X(\R \times \T_1) \text{ s.t. } [P_1](\psi) = [q] \big\},
\end{equation}
for any number $q \in \R$. The energy $E_\lambda$ in this definition is given by
$$
E_\lambda(\psi) := \frac{1}{2} \int_{\R \times \T_1} \big( |\partial_x \psi|^2 + \lambda^2 |\partial_y \psi|^2 \big) + \frac{1}{4} \int_{\R \times \T_1} \big( 1 - |\psi|^2 \big)^2,
$$
for a number $\lambda > 0$. The untwisted momentum $[P_1]$ is exactly the untwisted momentum in~\eqref{eq:defP2D} for the specific choice $\ell = 1$. In~\cite[Lemma 2]{deLGrSm1}, we established that the function $q \mapsto \boJ_\lambda([q])$ is well-defined as a $\pi$-periodic, even and Lipschitz continuous function on $\R$, with Lipschitz constant at most $\sqrt{2}$. Moreover, it is bounded by
\begin{equation}
\label{eq:estim-sup-J}
\boJ_\lambda(q) := \boJ_\lambda([q]) \leq \boI_\textup{1d}(q) < \sqrt{2} \, q,
\end{equation}
for any $0 < q \leq \pi/2$.

In order to rephrase this statement in our current setting, we introduce the scaling $\psi_\ell(x, y) = \psi(x, \ell y) $.
When the function $\psi$ lies in $X(\R \times \T_\ell)$, the function $\psi_\ell$ belongs to $X(\R \times \T_1)$. For $\lambda = 1/\ell$, we check that
$$
E_\lambda(\psi_\ell) = E(\psi), \quad \text{ and } \quad [P_1](\psi_\ell) = [P](\psi).
$$
Combining the two previous formulae, we are led to the identity
$$
\boI_\textup{2d}(p) = \boJ_\lambda(p),
$$
for any $p \in \R$. The statements in Lemma~\ref{lem:prop-I-l} then result from the previous properties of the function $\boJ_\lambda$. \qed

\subsection{Proof of Lemma~\ref{lem:presymmetry}}

The proof combines two ingredients. The first one is an oscillation estimate.
For the ease of reference, we recall here the identity~\eqref{eq:cle} 
$$
[P] \big( \psi_n(\cdot, y) \big) = [P] \big( \widehat{\psi}_n \big) + \frac{1}{2} \int_\R \big\langle i \partial_x w_n(x, y), w_n(x, y) \big\rangle_\C \, dx,
$$
and we denote by $q_n$ the real-valued function defined by
$$
q_n(y) = \frac{1}{2} \int_\R \langle i \partial_x w_n(x, y), w_n(x, y) \rangle_\C \, dx,
$$
for any $y \in \T_\ell$. Since $w_n \in H^1(\R \times \T_\ell),$ the function $q_n$ belongs to $W^{1, 1}(\T_\ell)$, with 
\begin{equation}
\label{eq:cont-deriv-q}
q_n'(y) = \int_\R \big\langle i \partial_x w_n(x, y), \partial_y w_n(x, y) \big\rangle_\C \, dx.
\end{equation}
Moreover, we compute
\begin{equation}
\label{eq:borne-deriv-q}
\int_{\T_\ell} |q_n'(y)| \, dy \leq \frac{1}{2} \int_{\R \times \T_\ell} \Big( |\partial_x w_n|^2 + |\partial_y w_n|^2 \Big)
\leq \ell E(\psi_n) \leq C,
\end{equation}
where $C$ is a positive number depending only on $\ell$ and $p$. 

The second ingredient is Lemma~\ref{lem:osci} below, which we applied to the functions $q_n - \int_{\T_\ell} q_n/\ell$ for $n$ large enough, and next combined with equality~\eqref{eq:cle}. Indeed, consider the integer $N_0 \geq 1$ and the number $\delta_0 > 0$ given by Lemma~\ref{lem:osci} for $\sigma = \ell (\boI_\textup{1d}(p) - \boI_\textup{2d}(p))/2$, and for the number $C$ in~\eqref{eq:borne-deriv-q}. In view of~\eqref{eq:defsigma} and~\eqref{eq:borne-deriv-q}, the functions $q_n - \int_{\T_\ell} q_n/\ell$ satisfy the assumptions of Lemma~\ref{lem:osci} for any integer $n$ large enough. Whenever $N = 2^j \geq N_0$ and $|\delta| < \delta_0$, we can therefore find an interval $I_n$ of size $\ell/2^j$ for which 
$$
\frac{1}{|I_n|} \int_{I_n} \bigg( q_n - \frac{1}{\ell} \int_{\T_\ell} q_n \bigg) = \delta.
$$
In view of~\eqref{eq:cle}, this can be rewritten as 
$$
\frac{1}{|I_n|} \int_{I_n} [P] \big( \psi_n(\cdot,y) \big) \, dy = [P] \big( \widehat{\psi}_n \big) + \frac{1}{\ell}\int_{\T_\ell} q_n(y) \, dy + \delta = [P] \big( \psi_n \big) + \delta = [p + \delta],
$$
and the proof of Lemma~\ref{lem:presymmetry} is therefore completed once we have proved

\begin{lem}
\label{lem:osci}
Let $C, \sigma >0$ and let $q \in W^{1,1}(\T_\ell, \R)$ be such that 
$$
\int_{\T_\ell} q = 0, \quad \int_{\T_\ell} |q'| \leq C, \quad \text{ and } \quad
	\int_{\T_\ell} |q| \geq \sigma.
$$
There exist an integer $N_0$ and a number $\delta_0 > 0$, depending only on $C$, $\sigma$ and $\ell$, such that, given any integer $N \geq N_0$ and any number $0 \leq |\delta| \leq \delta_0$, there exists at least one subinterval $I$ of $\T_\ell$ such that
$$
|I| = \frac{\ell}{N}, \quad \text{and} \quad \frac{1}{|I|} \int_I q = \delta.
$$
\end{lem}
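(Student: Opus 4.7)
The plan is to reduce the statement to an intermediate value theorem applied to the continuous sliding-average function
$$
F_N(t) := \frac{N}{\ell} \int_t^{t + \ell/N} q(y)\, dy = \int_0^1 q\big(t + \tfrac{\ell u}{N}\big)\, du,
$$
viewed on the circle $\T_\ell$. The goal will be to show that for $N$ large enough (depending only on $C$, $\sigma$, $\ell$), the image $F_N(\T_\ell)$ contains an interval $[-\delta_0, \delta_0]$ with $\delta_0 := \sigma/(4 \ell)$. Once this is established, the IVT on the connected space $\T_\ell$ produces, for every $\delta$ with $|\delta| \leq \delta_0$, a point $t \in \T_\ell$ satisfying $F_N(t) = \delta$, so that the interval $I := [t, t + \ell/N]$ meets the conclusion of the lemma.

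The core quantitative ingredient will be the $L^1$-approximation
$$
\| F_N - q \|_{L^1(\T_\ell)} \leq \frac{\ell C}{2 N}.
$$
To obtain it, I would start from $|q(t + h) - q(t)| \leq \int_t^{t + h} |q'|$ and use Fubini to derive the translation estimate $\int_{\T_\ell} |q(\cdot + h) - q| \leq h \int_{\T_\ell} |q'| \leq h C$. Inserting this into the integral representation of $F_N - q$ and averaging over $u \in [0, 1]$ produces the bound above. Simultaneously, Fubini and the hypothesis $\int_{\T_\ell} q = 0$ give $\int_{\T_\ell} F_N = 0$.

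I then set $N_0 := \lceil \ell C / \sigma \rceil$, so that $\ell C/(2 N) \leq \sigma / 2$ for every $N \geq N_0$. By the triangle inequality, $\|F_N\|_{L^1} \geq \|q\|_{L^1} - \|F_N - q\|_{L^1} \geq \sigma / 2$, and combining with the mean-zero property one deduces $\int_{\T_\ell} F_N^+ = \int_{\T_\ell} F_N^- \geq \sigma/4$. Since $\T_\ell$ has total length $\ell$, this forces
$$
\max_{\T_\ell} F_N \geq \frac{\sigma}{4 \ell}, \qquad \min_{\T_\ell} F_N \leq - \frac{\sigma}{4 \ell},
$$
and the IVT applied to the continuous function $F_N$ on the connected circle $\T_\ell$ concludes the proof. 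I do not expect any substantial obstacle: the whole argument rests on the fact that the sliding mean $F_N$ is a mollified version of $q$ that inherits both its $L^1$-mass and its mean-zero property, and the only point requiring care is ensuring that the circular setting does not disturb the use of Fubini in the translation estimate.
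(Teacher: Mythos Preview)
Your argument is correct, and it is genuinely different from the paper's. Both proofs finish the same way, by applying the intermediate value theorem to the continuous sliding-average $t \mapsto \frac{N}{\ell}\int_t^{t+\ell/N} q$, but the paths to showing that this function attains values $\geq \delta_0$ and $\leq -\delta_0$ diverge. The paper partitions $\T_\ell$ into the $N$ intervals $I_k = [k\ell/N,(k+1)\ell/N]$, splits $q = q_+ - q_-$, and runs a counting argument: using a Chebyshev bound on the number of intervals where $\int_{I_k}|q'|\geq \delta$ together with the pointwise bound $\|q_\pm\|_{L^\infty}\leq C$ (coming from the existence of a zero of $q$), it shows that for $N$ large there must exist indices $k_\pm$ with $\frac{1}{|I_{k_\pm}|}\int_{I_{k_\pm}} q \gtrless \pm\delta_0$. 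Your route is more analytic and more direct: the translation estimate $\|q(\cdot+h)-q\|_{L^1}\leq hC$ gives $\|F_N - q\|_{L^1}\leq \ell C/(2N)$, so $F_N$ inherits both the mean-zero property and at least half the $L^1$-mass of $q$ once $N\geq N_0$, and the max/min bounds follow from the trivial inequality $\int F_N^+\leq \ell\,\max F_N$. Your argument is shorter, yields the explicit constants $\delta_0 = \sigma/(4\ell)$ and $N_0 = \lceil \ell C/\sigma\rceil$, and avoids the combinatorics entirely; the paper's approach, on the other hand, locates the good intervals among the \emph{partition} intervals $I_k$ themselves before sliding, which is not needed for the lemma as stated but gives slightly more structural information.
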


\begin{proof}
Decompose the function $q$ as $q = q_+ - q_-$, where $q_+$ is the non-negative part of $q$, and $q_-$ the opposite of its non-positive part. Since $|q| = q_+ + q_-$, we deduce from the integral assumptions in Lemma~\ref{lem:osci} that
$$
\int_{\T_\ell} q_+ = \int_{\T_\ell} q_- \geq \frac{\sigma}{2}.
$$
Consider next the intervals $I_k = [k \ell/N, (k + 1) \ell /N]$ for $0 \leq k \leq N - 1$, with length $h = \ell/N$. Fix a number $\delta > 0$, and set
$$
\boJ_\pm = \Big\{ 0 \leq k \leq N - 1 \text{ s.t. } \frac{1}{h} \int_ {I_k} q_\pm > 0 \Big\}, \quad \text{ and } \quad \boK_\pm = \Big\{ 0 \leq k \leq N - 1 \text{ s.t. } \frac{1}{h} \int_ {I_k} q_\pm \geq \delta \Big\}.
$$
When an integer $k$ lies in $\boJ_- \cap \boK_+$, we deduce from the continuity of the function $q$ the existence of a number $\xi \in I_k$ such that $q(\xi) = 0$. As a consequence, we have
$$
\delta \leq \frac{1}{h} \int_{I_k} \big| q_+(x) - q_+(\xi) \big| \, dx \leq \frac{1}{h} \int_{I_k} \bigg| \int_\xi^x q_+'(y) \, dy \bigg| \leq \int_{I_k} |q_+'| \leq \int_{I_k} |q'|,
$$
and $\boJ_- \cap \boK^+$ is a subset of
$$
\boL = \Big\{ 0 \leq k \leq N - 1 \text{ s.t. } \int_ {I_k} \big| q' \big| \geq \delta \Big\}.
$$
Similarly, we show that $\boJ_+ \cap \boK_-$ is a subset of $\boL$. Note here that the cardinal of $\boL$ is less than $C/\delta$ due to the Tchebychev inequality, so that the same bound holds for both the sets $\boJ_- \cap \boK_+$ and $\boJ_+ \cap \boK_-$.

Since the integral $\int_{\T_\ell} q$ vanishes, there also exists a number $y \in \T_\ell$ such that $q(y) = 0$. As a consequence, we have
$$
q_\pm(x) = q_\pm(x) - q_\pm(y) \leq \int_{T_\ell} |q_\pm'| \leq \int_{T_\ell} |q'| \leq C,
$$
for any $x \in \T_\ell$, so that
$$
\int_{I_k} q_\pm \leq C h,
$$
for any $0 \leq k \leq N - 1$. Since $\int_{\T_\ell} q_+ \geq \sigma/2$, we deduce that
$$
\frac{\sigma}{2} \leq \sum_{I \in \boK_+} \int_I q_+ + \sum_{I \notin \boK_+} \int_I q_+ \leq C h \, \text{Card} \big( \boK_+ \big) + \delta h \, \text{Card} \big( \boK_+^c \big).
$$
Observing that $\text{Card}(\boK_+^c) = N - \text{Card}(\boK_+)$, we are led to
$$
\text{Card}(\boK_+) \geq \frac{N (\sigma - 2 \delta \ell)}{2 (C - \delta) \ell},
$$
when $\delta < C$, and the same estimate holds for the cardinal of $\boK_-$.

In conclusion, we obtain
$$
\text{Card} \big( \boK_+ \cap \boJ_-^c \big) \geq \frac{N (\sigma - 2 \delta \ell)}{2 (C - \delta) \ell} - \frac{C}{\delta},
$$
as well as the same estimate for the cardinal of $\boK_- \cap \boJ_+^c$. We finally fix $\delta = \delta_0 = \min \{ \sigma/(4 \ell), \linebreak[0] C/2 \}$, so that we can find a number $N_0 > 0$, depending only on $C$, $\sigma$ and $\ell$, such that
$$
\text{Card} \big( \boK_\pm \cap \boJ_\mp^c \big) \geq 1,
$$
for any $N \geq N_0$. Choosing integers $k_\pm \in \boK_\pm \cap \boJ_\mp^c$, we are led to
$$
\frac{1}{h} \int_{I_{k_+}} q = \frac{1}{h} \int_{I_{k_+}} q_+ \geq \delta_0,
	\quad \text{ and } \quad \frac{1}{h} \int_{I_{k_-}} q = - \frac{1}{h}
	\int_{I_{k_-}} q_- \leq - \delta_0.
$$

We complete the proof by recalling that $q$ is in $L^1(\T_\ell)$, so that the map $t \mapsto \int_t^{t + h} q$ is continuous on $\T_\ell$. Given any number $|\delta| \leq \delta_0$, it is then sufficient to apply the intermediate value theorem to find an interval $I_\delta$, with length $h = \ell/N$, such that
$$
\frac{1}{h} \int_ {I_\delta} q = \delta.
$$
This completes the proof of Lemma~\ref{lem:osci}.
\end{proof}

\subsection{Proof of Lemma~\ref{lem:deriv-0}}

For the proof of Lemma~\ref{lem:deriv-0}, we shall make use of the notion of vanishing sequence introduced in Subsection~\ref{sub:conc-comp} and the proofs of the related Lemmas~\ref{lem:conv-const},~\ref{lem:lions} and~\ref{lem:farfromone} (which could better be read first). 
Set
$$
\tau = \liminf_{p \to 0^+} \frac{\boI_\textup{2d}(p)}{p}.
$$
In view of~\eqref{eq:estim-sup-I}, we already know that
$$
\limsup_{p \to 0^+} \frac{\boI_\textup{2d}(p)}{p} \leq \sqrt{2},
$$
so that $\tau \leq \sqrt{2}$, and the convergence in Lemma~\ref{lem:deriv-0} will follow from the inequality
\begin{equation}
\label{asm}
\tau \geq \sqrt{2}.
\end{equation}

In order to prove this inequality, we consider a sequence of positive numbers $(p_n)_{n \geq 0}$, such that $p_n \to 0$ as $n \to \infty$, and
\begin{equation}
\label{haouas}
\frac{\boI_\textup{2d}(p_n)}{p_n} \to \tau.
\end{equation}
By definition of the minimizing energy $\boI_\textup{2d}(p_n)$, given any number $0 < \delta \leq 2 - \sqrt{2}$, there exist functions $\psi_n \in X(\R \times \T_\ell)$, with $[P_\ell](\psi_n) = p_n$ modulo $\pi$, such that
\begin{equation}
\label{kremer}
\boI_\textup{2d}(p_n) \leq E(\psi_n) \leq \boI_\textup{2d}(p_n) + \delta p_n \leq \big( \tau + \delta \big) p_n \leq 2 p_n,
\end{equation}
for any $n \geq 0$. Since $p_n \to 0$ as $n \to \infty$, it follows in particular that $(\psi_n)_{n \geq 0}$ is vanishing, and therefore also from Lemma~\ref{lem:conv-const} that
\begin{equation}
\label{sowakula}
\delta_n = \inf_{x \in \R} \big| \widehat{\psi}_n(x) \big| \to 1, 
\end{equation}
as $n \to \infty$. 
By~\eqref{kremer}, the energies $E(\psi_n)$ are also uniformly bounded and Lemma~\ref{lem:farfromone} provides a subsequence, still denoted $(\psi_n)_{n \geq 0}$ here, for which
$$
p_n \leq \frac{E(\psi_n)}{\sqrt{2} \, \delta_n} + o(1),
$$
as $n \to \infty$. In view of~\eqref{haouas},~\eqref{kremer} and~\eqref{sowakula}, proving~\eqref{asm} would follow from the fact that the $o(1)$ in this inequality is actually a $o(p_n)$. In order to derive this stronger estimate, we adapt slightly the argument in the proof of Lemma~\ref{lem:farfromone}.

Going first to the proof of Lemma~\ref{lem:lions}, we decompose the energies $E(\psi_n)$ as $E(\psi_n) = \boE_n + \boR_n$ according to~\eqref{unsa}. Arguing as in the proof of~\eqref{sud} (for instance with $r = 1$), we obtain
\begin{equation}
\label{urios}
\int_\R \int_{\T_\ell} |w_n|^4 \leq C_\ell E(\psi_n)^2,
\end{equation}
for any $n \geq 0$. Here as in the sequel, the number $C_\ell \geq 0$, possibly different from line to line, only depends on $\ell$. Inserting the previous inequality into~\eqref{cgc}, and applying~\eqref{kremer}, we obtain
\begin{equation}
\label{faingaa}
\big| \boR_n \big| \leq C_\ell E(\psi_n) \Big( {\boE_n}^\frac{1}{2} + E(\psi_n) \Big) \leq 2 C_\ell \, p_n \big( {\boE_n}^\frac{1}{2} + 2 p_n \big).
\end{equation}
Since $\boE_n = E(\psi_n) - \boR_n$, we deduce from~\eqref{kremer} and~\eqref{faingaa} that
\begin{equation}
\label{charrier}
\boE_n \leq C_\ell p_n,
\end{equation}
and then from~\eqref{faingaa} that
$$
\big| E(\psi_n) - \boE_n \big| \leq C_\ell p_n^\frac{3}{2} \big( 1 + p_n^\frac{1}{2} \big).
$$
In view of~\eqref{kremer}, we conclude that
\begin{equation}
\label{falgoux}
\boE_n \leq \big( \tau + \delta \big) p_n + C_\ell p_n^\frac{3}{2} \big( 1 + p_n^\frac{1}{2} \big).
\end{equation}

Going back to~\eqref{sowakula}, we can assume, up to a subsequence, that $\delta_n \geq 1/2$ for any $n \geq 0$. In this case, the quantity $\gR_n$ in~\eqref{cftc} is well-defined, and we can bound it by
\begin{equation}
\label{lee}
\big| \gR_n \big| \leq C_\ell \, \big\| w_n \big\|_{L^4(\R \times \T_\ell}^2 \, \big\| \partial_x \widehat{\psi}_n \|_{L^2(\R \times \T_\ell)} \leq C_\ell \, E(\psi_n) \, \boE_n \leq C_\ell \, p_n^2,
\end{equation}
by combining~\eqref{kremer},~\eqref{urios} and~\eqref{charrier}.

Arguing as in the proof of Lemma~\ref{lem:farfromone}, we can also lift the function $\widehat{\psi}_n$ as $\widehat{\psi}_n = \rho_n \, e^{i \varphi_n}$, and find an integer $k_n \in \Z$ such that
$$
p_n + k_n \pi = \frac{1}{2} \int_\R \big( 1 - \rho_n^2 \big) \partial_x \varphi_n + \frac{1}{\ell} \int_\R \int_{\T_\ell} \frac{1}{|\widehat{\psi}_n|^2} \, \langle w_n, \widehat{\psi}_n \rangle_\C \, \langle i \partial_x w_n , \widehat{\psi}_n \rangle_\C + \boR_n.
$$
Since $p_n \to 0$ as $n \to \infty$, we infer from the inequality $|\widehat{\psi}_n| = \rho_n \geq \delta_n$, and from~\eqref{lee} that
$$
p_n \leq \big| p_n + k_n \pi \big| \leq \frac{1}{2 \delta_n} \int_\R \big| 1 - \rho_n^2 \big| \big| \rho_n \partial_x \varphi_n \big| + \frac{1}{\ell \delta_n} \int_\R \int_{\T_\ell} \big| \langle w_n, \widehat{\psi}_n \rangle_\C \big| \, \big| \partial_x w_n \big| + C_\ell \, p_n^2.
$$

On the other hand, we also check that
$$
\boE_n = \frac{1}{2} \int_\R \Big( |\nabla \rho_n|^2 + \rho_n^2 |\nabla \varphi_n|^2 + \frac{(1 - \rho_n^2)^2}{2} \Big) + \frac{1}{2 \ell} \int_\R \int_{\T_\ell} \Big( |\nabla w_n|^2 + 2 \langle w_n, \widehat{\psi}_n \rangle_\C^2 \Big).
$$
As in the proof of Lemma~\ref{lem:farfromone}, we derive from the Young inequality, and then from~\eqref{falgoux}, that 
$$
p_n \leq \frac{\boE_n}{\sqrt{2} \, \delta_n} + C_\ell \, p_n^2 \leq \frac{\tau + \delta}{\sqrt{2} \, \delta_n} p_n + C_\ell p_n^\frac{3}{2} \big( 1 + p_n^\frac{1}{2} \big).
$$
Using the fact that $p_n \to 0$ and $\delta_n \to 1$ as $n \to \infty$, we obtain in this limit that $
\tau + \delta \geq \sqrt{2}.$ Since $\delta$ can be chosen as any arbitrary small positive number, the inequality in~\eqref{asm} is proved, and as a consequence, so is Lemma~\ref{lem:deriv-0}. \qed

\subsection{Proof of Proposition~\ref{prop:prop-I-l}}

The concavity of the function $\boI_\textup{2d}$ on the interval $[0, \pi]$ is a direct consequence of Lemmas~\ref{lem:prop-I-l},~\ref{lem:concave} and~\ref{lem:cond-concave}. Combining Lemmas~\ref{lem:strict-subadditivity} and~\ref{lem:deriv-0} with the strict inequality in~\eqref{eq:estim-sup-I} guarantees that the function $\boI_\textup{2d}$ is also strictly sub-additive on $[0, \pi]$. We now use its $\pi$-periodicity and its parity to extend this property to $\R / \pi \Z$.

Fix two numbers $(\gp_1, \gp_2) \in (\R / \pi \Z)^2$, with $\gp_1 \neq 0$ and $\gp_2 \neq 0$, and consider two numbers $0 < p_1, p_2 < \pi$ such that $\gp_1 = [p_1]$ and $\gp_2 = [p_2]$. Two situations can occur. When $p_1 + p_2 \leq \pi$, the strict sub-additivity of the function $\boI_\textup{2d}$ on $[0, \pi]$ directly provides
$$
\boI_\textup{2d}(\gp_1 + \gp_2) = \boI_\textup{2d}(p_1 + p_2) < \boI_\textup{2d}(p_1) + \boI_\textup{2d}(p_2) = \boI_\textup{2d}(\gp_1) + \boI_\textup{2d}(\gp_2).
$$
Otherwise, we know that $\pi < p_1 + p_2 < 2 \pi$, so that $0 < \pi - p_1 + \pi - p_2 < \pi$. Since $0 < \pi - p_1 < \pi$ and $0 < \pi - p_2 < \pi$, we can again invoke the strict sub-additivity of the function $\boI_\textup{2d}$ on $[0, \pi]$ to obtain
$$
\boI_\textup{2d}(\pi - p_1 + \pi - p_2) < \boI_\textup{2d}(\pi - p_1) + \boI_\textup{2d}(\pi - p_2).
$$
Since the function $\boI_\textup{2d}$ is even and $\pi$-periodic, this can be rewritten as
$$
\boI_\textup{2d}(p_1 + p_2) < \boI_\textup{2d}(p_1) + \boI_\textup{2d}(p_2),
$$
which again leads to
$$
\boI_\textup{2d}(\gp_1 + \gp_2) < \boI_\textup{2d}(\gp_1) + \boI_\textup{2d}(\gp_2).
$$
This concludes the proofs of~\eqref{eq:str-sub-add} and of Proposition~\ref{prop:prop-I-l}. \qed

\section{The compactness of the minimizing sequences}
\label{sec:compact}

\subsection{Proof of Lemma~\ref{lem:conv-const}}

Assume for the sake of a contradiction that 
$$
\liminf_{n \to \infty} \, \sup_{x \in \R} \, \big| 1 - |\widehat{\psi}_n(x)| \big| = \nu > 0.
$$
In this case, we can find a sequence of points $(a_n)_{n \geq 0}$ in $\R$ such that
\begin{equation}
\label{cfdt}
\big| \widehat{\psi}_n(a_n) \big| \leq 1 - \frac{\nu}{2},
\end{equation}
for $n$ large enough. Given a compact subset $K$ of $\R$, which contains $0$, we deduce from
condition~\eqref{eq:vanishing} that
$$
\int_K \int_{\T_\ell} \Big( |\nabla \psi_n(\cdot + a_n, \cdot)|^2 + (1 - |\psi_n(\cdot + a_n, \cdot)|^2)^2 \Big) \to 0, 
$$
as $n \to \infty$. As a consequence, we obtain
\begin{equation}
\label{cgt}
\nabla \psi_n(\cdot + a_n, \cdot) \to 0 \quad \text{ in } L^2(K \times \T_\ell), \quad \text{ and } \quad
1 - |\psi_n(\cdot + a_n, \cdot)|^2 \to 0 \quad \text{ in } L^2(K \times \T_\ell).
\end{equation}
Moreover, we also have
$$
\int_K \int_{\T_\ell} |\psi_n(\cdot + a_n, \cdot)|^2 \leq \ell |K| + \big( \ell |K| \big)^\frac{1}{2}
\bigg( \int_K \int_{\T_\ell} \big( 1 - |\psi_n(\cdot + a_n, \cdot)|^2 \big)^2 \bigg)^\frac{1}{2},
$$
for any $n \geq 0$. Up to a subsequence, we can therefore assume that there exists a function
$\psi_\infty \in H^1(K \times \T_\ell)$ such that
$$
\psi_n(\cdot + a_n, \cdot) \rightharpoonup \psi_\infty \quad \text{ in } H^1(K \times \T_\ell).
$$
Combining~\eqref{cgt} with the Rellich-Kondrachov theorem, we check that the function $\psi_\infty$
is a constant function of modulus $1$, and that the previous convergence is actually strong in
$H^1(K \times \T_\ell)$.

Observe finally that
$$
\int_K |\widehat{\psi}_n(\cdot + a_n) - \psi_\infty|^2 \leq \frac{1}{\ell} \int_K \int_{\T_\ell} |\psi_n(\cdot + a_n,
\cdot) - \psi_\infty|^2 \to 0,
$$
while
$$
\int_K |\partial_x \widehat{\psi}_n(\cdot + a_n) |^2 \leq \frac{1}{\ell} \int_K \int_{\T_\ell} |\partial_x
\psi_n(\cdot + a_n) |^2 \to 0.
$$
Hence we have
$$
\widehat{\psi}_n(\cdot + a_n) \to \psi_\infty \quad \text{ in } H^1(K),
$$
as $n \to \infty$. Invoking the Rellich compactness theorem, we can assume, up to a further subsequence, that
$$
\widehat{\psi}_n(\cdot + a_n) \to \psi_\infty \quad \text{ in } L^\infty(K).
$$
Since $0 \in K$, this is enough to guarantee that
$$
|\widehat{\psi}_n(a_n)| \to |\psi_\infty| = 1,
$$
which provides a contradiction with~\eqref{cfdt}. This concludes the proof of Lemma~\ref{lem:conv-const}. \qed

\subsection{Proof of Lemma~\ref{lem:lions}}

Concerning the proof of~\eqref{eq:retraite}, we set $\psi_n = \widehat{\psi}_n + w_n$ as
in~\eqref{eq:decomp1d}. Going back to~\eqref{eq:decompEw}, we decompose the energy $E(\psi_n)$ as
\begin{equation}
\label{unsa}
E( \psi_n) = \boE_n + \boR_n,
\end{equation}
with
$$
\boE_n := \frac{1}{2 \ell} \int_\R \int_{\T_\ell} \Big( |\partial_x \widehat{\psi}_n|^2 + \frac{1}{2} \big( 1 - |\widehat{\psi}_n|^2 \big)^2 + |\nabla w_n|^2 + 2 \langle w_n, \widehat{\psi}_n \rangle_\C^2 \Big),
$$
and
$$
\boR_n := \frac{1}{2 \ell} \int_\R \int_{\T_\ell} \Big( - |w_n|^2 \big( 1 - |\widehat{\psi}_n|^2 \big) + 2 \langle w_n, \widehat{\psi}_n \rangle_\C \, |w_n|^2 + \frac{1}{2} |w_n|^4 \Big).
$$
We have to check that the term $\boR_n$ tends to $0$ as $n \to \infty$. Using the Cauchy-Schwarz inequality, we observe that
\begin{equation}
\label{cgc}
\begin{split}
\big| \boR_n \big| \leq & \frac{1}{2 \ell} \big\| w_n \big\|_{L^4(\R \times \T_\ell)}^2 \Big( \big\| 1 - |\widehat{\psi}_n|^2 \big\|_{L^2(\R \times \T_\ell)}+ 2 \big\| \langle w_n, \widehat{\psi}_n \rangle_\C \big\|_{L^2(\R \times \T_\ell)} + \frac{1}{2} \big\| w_n \big\|_{L^4(\R \times \T_\ell)}^2 \Big) \\
\leq & \frac{1}{2 \ell} \big\| w_n \big\|_{L^4(\R \times \T_\ell)}^2 \Big( 4 \, \ell^\frac{1}{2} \, {\boE_n}^\frac{1}{2} + \big\| w_n \big\|_{L^4(\R \times \T_\ell)}^2 \Big).
\end{split}
\end{equation}
As a consequence, we are reduced to prove that, under condition~\eqref{eq:vanishing},
\begin{equation}
\label{fo}
\big\| w_n \big\|_{L^4(\R \times \T_\ell)} \to 0,
\end{equation}
as $n \to \infty$. The proof of this claim is classical in the context of concentration-compactness arguments
(see~\cite[Lemma I.1]{Lions2}). For the sake of completeness, we give the following detail.

Fix the positive number $r$ such that~\eqref{eq:vanishing} holds, and set
$$
\varepsilon_n := \sup_{a \in \R} \, \frac{1}{\ell} \int_{B(a,r)} \int_{\T_\ell} e(\psi_n).
$$
Invoking the Poincar\'e-Wirtinger inequality, we check that
$$
\big\| w_n \big\|_{L^2(B(a, r) \times \T_\ell)} \leq \big\| \partial_y w_n \big\|_{L^2(B(a, r) \times \T_\ell)} \leq \big\| \nabla w_n \big\|_{L^2(B(a, r) \times \T_\ell)},
$$
for any $n \geq 0$ and any $a \in \R$. Combining this inequality with the Gagliardo-Nirenberg inequality, we can find a number $C_r$, only depending on $r$, such that
\begin{align*}
\int_{B(a, r)} \int_{\T_\ell} |w_n|^4 \leq & C_r \bigg( \int_{B(a, r)} \int_{\T_\ell} |w_n|^2 \bigg) \bigg( \int_{B(a, r)} \int_{\T_\ell} \big( |\nabla w_n|^2 + |w_n|^2 \big)\bigg) \\
\leq & 2 \, C_r \bigg( \int_{B(a, r)} \int_{\T_\ell} |\nabla w_n|^2 \bigg)^2.
\end{align*}
Recall next that
$$
\int_{B(a, r)} \int_{\T_\ell} |\nabla w_n|^2 \leq 2 \int_{B(a,r)} \int_{\T_\ell} e(\psi_n) \leq 2 \, \ell \, \varepsilon_n,
$$
so that
$$
\int_{B(a, r)} \int_{\T_\ell} |w_n|^4 \leq 8 \, \ell \, C_r \, \varepsilon_n \, \int_{B(a, r)} \int_{\T_\ell} e(\psi_n).
$$
By summation we are then led to
\begin{equation}
\label{sud}
\int_\R \int_{\T_\ell} |w_n|^4 \leq 8 \, \ell \, C_r \, \varepsilon_n \, E(\psi_n).
\end{equation}
Since $\varepsilon_n \to 0$ as $n \to \infty$ by~\eqref{eq:vanishing}, and since the energies $E(\psi_n)$ are bounded, we conclude that~\eqref{fo} does hold. Inserting this limit into~\eqref{unsa} and~\eqref{cgc}, and using again the boundedness of the energies $E(\psi_n)$, we deduce that the sequence $(\boE_n)_{n \geq 0}$ is also bounded, and then that $\boR_n$ converges to $0$ as $n \to \infty$. In view of~\eqref{unsa}, this completes the proof of~\eqref{eq:retraite}.

We now turn to~\eqref{eq:tauxplein}. In view of Lemma~\ref{lem:conv-const}, we can assume, up to a subsequence, that all the functions $\widehat{\psi}_n$ satisfy
$$
\delta_n := \inf_{x \in \R} \big| \widehat{\psi}_n(x) \big| \geq \delta_*,
$$
for a fixed number $\delta_* > 0$. In this case, we can write
$$
w_n = \langle w_n, \widehat{\psi}_n \rangle_{\C} \, \frac{\widehat{\psi}_n}{|\widehat{\psi}_n|^2} + \langle w_n, i \widehat{\psi}_n \rangle_{\C} \, \frac{i \, \widehat{\psi}_n}{|\widehat{\psi}_n|^2},
$$
so that
$$
\frac{1}{2 \ell} \int_\R \int_{\T_\ell} \langle i \partial_x w_n, w_n \rangle_\C = \frac{1}{2 \ell} \int_\R \int_{\T_\ell}
\frac{1}{|\widehat{\psi}_n|^2} \Big( \langle w_n, \widehat{\psi}_n \rangle_\C \, \langle i \partial_x w_n , \widehat{\psi}_n
\rangle_\C + \langle w_n, i \widehat{\psi}_n \rangle_\C \, \langle \partial_x w_n , \widehat{\psi}_n
\rangle_\C \Big).
$$
Integrating by parts the last term in this identity, we obtain
\begin{equation}
\label{cftc}
\frac{1}{2 \ell} \int_\R \int_{\T_\ell} \langle i \partial_x w_n, w_n \rangle_\C = \frac{1}{\ell} \int_\R \int_{\T_\ell}
\frac{1}{|\widehat{\psi}_n|^2} \langle w_n, \widehat{\psi}_n \rangle_\C \, \langle i \partial_x w_n , \widehat{\psi}_n
\rangle_\C + \gR_n,
\end{equation}
with
\begin{align*}
\gR_n = \frac{1}{2 \ell} \int_\R \int_{\T_\ell} & \frac{1}{|\widehat{\psi}_n|^2} \Big( \frac{2}{|\widehat{\psi}_n|^2} \langle \widehat{\psi}_n, \partial_x \widehat{\psi}_n \rangle_\C \, \langle w_n , \widehat{\psi}_n \rangle_\C \, \langle w_n , i \widehat{\psi}_n \rangle_\C \\
& - \langle w_n , \partial_x \widehat{\psi}_n \rangle_\C \, \langle w_n , i \widehat{\psi}_n \rangle_\C - \langle w_n , \widehat{\psi}_n \rangle_\C \, \langle w_n , i \partial_x \widehat{\psi}_n \rangle_\C \Big).
\end{align*}
Since
$$
\big| \gR_n \big| \leq \frac{2}{\ell \, \delta_*} \int_\R \int_{\T_\ell} |w_n|^2 \, |\partial_x \widehat{\psi}_n| \leq \frac{2}{\ell \, \delta_*} \, \big\| w_n \big\|_{L^4(\R \times \T_\ell)}^2 \, \big\| \partial_x \widehat{\psi}_n \big\|_{L^2(\R \times \T_\ell)},
$$
we infer from~\eqref{eq:retraite} and~\eqref{fo} that $\gR_n \to 0$ as $n \to \infty$. In view of~\eqref{cftc}, this concludes the proof of~\eqref{eq:tauxplein}, and of Lemma~\ref{lem:lions}. \qed

\subsection{Proof of Lemma~\ref{lem:farfromone}}

Recall first that
$$
\delta_n \geq 1 - \sup_{x \in \R} \big| 1 - |\widehat{\psi}_n(x)| \big|,
$$
so that by Lemma~\ref{lem:conv-const}, we can assume, up to a subsequence, that
$$
\delta_n > 0,
$$
for any $n \geq 0$. In particular, we can lift the function $\widehat{\psi}_n$ as $\widehat{\psi}_n = \rho_n e^{i \varphi_n}$, where $\rho_n$ and $\varphi_n$ are continuous real-valued functions defined on the whole line $\R$. In this situation, it follows from~\cite[Lemma 3]{BeGrSaS1} that the untwisted momentum $[P](\widehat{\psi}_n)$ is equal to
$$
[P] \big( \widehat{\psi}_n \big) = \frac{1}{2} \int_\R \big( 1 - \rho_n^2 \big) \partial_x \varphi_n \quad \text{ modulo } \pi.
$$
Going back to~\eqref{eq:defP2D} and~\eqref{eq:tauxplein}, we obtain
\begin{equation}
\label{eq:decompPfin}
[P] \big( \psi_n \big)
= \frac{1}{2} \int_\R \big( 1 - \rho_n^2 \big) \partial_x \varphi_n + \frac{1}{\ell} \int_\R \int_{\T_\ell} \frac{1}{|\widehat{\psi}_n|^2} \, \langle w_n, \widehat{\psi}_n \rangle_\C \, \langle i \partial_x w_n , \widehat{\psi}_n \rangle_\C + o(1) \quad \text{ modulo } \pi,
\end{equation}
as $n \to \infty$.

Similarly, we can expand the energy of the function $\widehat{\psi}_n$ as
$$
E \big( \widehat{\psi}_n \big) = \frac{1}{2} \int_\R \Big( |\nabla \rho_n|^2 + \rho_n^2 |\nabla \varphi_n|^2 + \frac{(1 - \rho_n^2)^2}{2} \Big),
$$
so that we derive from~\eqref{eq:retraite} that
\begin{equation}
\label{eq:decompEfin}
E(\psi_n) = \frac{1}{2} \int_\R \Big( |\nabla \rho_n|^2 + \rho_n^2 |\nabla \varphi_n|^2 + \frac{(1 - \rho_n^2)^2}{2} \Big) + \frac{1}{2 \ell} \int_\R \int_{\T_\ell} \Big( |\nabla w_n|^2 + 2 \langle w_n, \widehat{\psi}_n \rangle_\C^2 \Big) + o(1).
\end{equation}

We next estimate the first integral term in the right-hand side of~\eqref{eq:decompPfin} as
$$
\sqrt{2} \delta_n \bigg| \int_\R \big( 1 - \rho_n^2 \big) \partial_x \varphi_n \bigg| \leq 2 \int_\R \frac{\big| 1 - \rho_n^2 \big|}{\sqrt{2}} \, \rho_n \big| \partial_x \varphi_n \big| \leq \int_\R \Big( \rho_n^2 |\nabla \varphi_n|^2 + \frac{(1 - \rho_n^2)^2}{2} \Big),
$$
while we bound the second one by
\begin{align*}
\sqrt{2} \delta_n \bigg| \int_\R \int_{\T_\ell} \frac{1}{|\widehat{\psi}_n|^2} \, \langle w_n, \widehat{\psi}_n \rangle_\C \, \langle i \partial_x w_n , \widehat{\psi}_n \rangle_\C \bigg| \leq & \int_\R \int_{\T_\ell} \sqrt{2} \big| \langle w_n, \widehat{\psi}_n \rangle_\C \big| \, \big| \partial_x w_n \big| \\
\leq & \frac{1}{2} \int_\R \int_{\T_\ell} \Big( |\partial_x w_n|^2 + 2 \langle w_n, \widehat{\psi}_n \rangle_\C^2 \Big).
\end{align*}
In view of~\eqref{eq:decompPfin} and~\eqref{eq:decompEfin}, we are led to
$$
\big| [P](\psi_n) \big| \leq \frac{E(\psi_n) + o(1)}{\sqrt{2} \delta_n} + o(1),
$$
as $n \to \infty$. Using the fact that $\delta_n \to 1$ in this limit by Lemma~\ref{lem:conv-const}, we obtain~\eqref{eq:contre-64}, which concludes the proof of Lemma~\ref{lem:farfromone}. \qed

\subsection{Proof of Lemma~\ref{lem:vaud}}

We split the proof into four steps.

\begin{step}
\label{S1}
There exists a number $M_p > 0$, depending only on $p$, such that, given any integer $m \geq 1$, there exists a number $R_m \leq \widetilde{R}_m \leq 2 R_m$ such that
\begin{equation}
\label{eq:tranche}
\frac{1}{\ell} \int_{\T_\ell} \Big( e \big( \psi_{n_m} \big)(\widetilde{R}_m, y) + e \big( \psi_{n_m} \big)(-\widetilde{R}_m, y) \Big) \, dy \leq \frac{M_p}{R_m}.
\end{equation}
\end{step}

This follows from the fact that $(\psi_{n_m})_{m \geq 1}$ is a minimizing sequence for the minimization problem $\boI_\textup{2d}(p)$. As a consequence, we can find a positive number $M_p$, depending only on $p$, such that
$$
\frac{1}{\ell} \int_{R_m}^{2 R_m} \int_{\T_\ell} \Big( e \big( \psi_{n_m} \big)(- x, y) + e \big( \psi_{n_m} \big)(x, y) \Big) \, dx \, dy \leq M_p.
$$
This is sufficient to find a number $R_m \leq \widetilde{R}_m \leq 2 R_m$ satisfying~\eqref{eq:tranche}.

As a consequence of~\eqref{eq:tranche}, the restrictions $\psi_{n_m}(\pm \widetilde{R}_m, \cdot)$ lie in $H^1(\T_\ell)$. We now check that a function $\psi \in H^1(\T_\ell)$ with sufficiently small Ginzburg-Landau energy does not vanish.

\begin{step}
\label{S2}
Let $\psi \in H^1(\T_\ell)$. There exists a number $0 <\kappa_\ell < 1$, depending only on $\ell$, such that, if
\begin{equation}
\label{eq:fine}
\kappa(\psi) := \frac{1}{2 \ell} \int_{\T_\ell} |\partial_y \psi|^2 + \frac{1}{4 \ell} \int_{\T_\ell} \big( 1 - |\psi|^2 \big)^2 \leq \kappa_\ell,
\end{equation}
then the function $\psi$ does not vanish on $\T_\ell$, and can be lifted as $\psi = |\psi| \, e^{i \varphi}$, where $\varphi$ is a continuous, $\ell$-periodic, real-valued function. Moreover, there exists a number $C_\ell > 0$, depending only on $\ell$, such that the mean value
$$
\widehat{\psi} := \frac{1}{\ell} \int_{\T_\ell} \psi,
$$
satisfies
\begin{equation}
\label{eq:mayo}
\Big| \widehat{\psi} \Big| \geq 1 - C_\ell \, \kappa(\psi)^\frac{1}{2} \geq \frac{1}{2},
\end{equation}
and
\begin{equation}
\label{eq:thon}
\bigg| \textup{arg} \big( \widehat{\psi} \big) - \frac{1}{\ell} \int_{\T_\ell} \varphi \bigg| \leq C_\ell \, \kappa(\psi)^\frac{1}{2},
\end{equation}
when the function $\psi$ satisfies the condition in~\eqref{eq:fine}.
\end{step}

Assume that a function $\psi \in H^1(\T_\ell)$ vanishes. Up to a translation, we can assume that $\psi(0) = 0$, so that we can write
$$
\big| \psi(y) \big| \leq \sqrt{y} \, \| \partial_y \psi \|_{L^2(\T_\ell)}, 
$$
for any $0 \leq y \leq \ell$. In this case, either
$$
\frac{1}{2 \ell} \int_{\T_\ell} |\partial_y \psi|^2 \geq \frac{1}{4 \ell^2},
$$
or we can compute
$$
\frac{1}{4 \ell} \int_{\T_\ell} \big( 1 - |\psi|^2 \big)^2 \geq \frac{1}{4 \ell} \int_0^\ell \big( 1 - y \, \| \partial_y \psi \|_{L^2(\T_\ell)}^2 \big)^2 \, dy \geq \frac{1}{4} \big( 1 - \ell \, \| \partial_y \psi \|_{L^2(\T_\ell)}^2 \big)^2 \geq \frac{1}{16}.
$$
By contraposition, we conclude that a function $\psi$ such that
$$
\frac{1}{2 \ell} \int_{\T_\ell} |\partial_y \psi|^2 + \frac{1}{4 \ell} \int_{\T_\ell} \big( 1 - |\psi|^2 \big)^2 \leq \min \Big\{ \frac{1}{4 \ell^2}, \frac{1}{16} \Big\},
$$
does not vanish on $\T_\ell$.

Consider next its mean value $\widehat{\psi}$, and recall from the Poincar\'e-Wirtinger inequality that
\begin{equation}
\label{eq:pw}
\int_{\T_\ell} \big| \psi - \widehat{\psi} \big|^2 \leq \frac{\ell^2}{4 \pi^2} \int_{\T_\ell} \big| \partial_y \psi \big|^2,
\end{equation}
so that by the triangle and Cauchy-Schwarz inequalities,
\begin{align*}
\big| \widehat{\psi} \big| & \geq 1 - \frac{1}{\ell} \int_{\T_\ell} \big( 1 - |\psi| \big) - \frac{1}{\ell} \int_{\T_\ell} \big| \psi - \widehat{\psi} \big| \\
& \geq 1 - 2 \bigg( \frac{1}{4 \ell} \int_{\T_\ell} \big( 1 - |\psi|^2 \big)^2 \bigg)^\frac{1}{2} - \frac{\ell}{\sqrt{2} \pi} \bigg( \frac{1}{2 \ell} \int_{\T_\ell} |\partial_y \psi|^2 \bigg)^\frac{1}{2}.
\end{align*}
The first inequality in estimate~\eqref{eq:mayo} follows, when $C_\ell$ is supposed to be larger than $2 + \ell/(\sqrt{2} \pi)$, and it is sufficient to choose $\kappa_\ell$ less than $1/(4 \ell^2)$, $1/16$ and $1/(4 C_\ell)^2$, to obtain the second one.

We finally turn to~\eqref{eq:thon}. We first introduce a continuous, $\ell$-periodic, real-valued function $\varphi$ such that $\psi = |\psi| \, e^{i \varphi}$ on $\T_\ell$. We then compute, using the Poincar\'e-Wirtinger inequality,
$$
\Big| e^{i \varphi} - \frac{\widehat{\psi}}{|\widehat{\psi}|} \Big| = \Big| \frac{(|\widehat{\psi}| - |\psi|) \, \psi}{|\psi| \, |\widehat{\psi}|} + \frac{\psi - \widehat{\psi}}{|\widehat{\psi}|} \Big| \leq \frac{2 \, |\psi - \widehat{\psi}|}{|\widehat{\psi}|} \leq \big| \psi - \widehat{\psi} \big| \leq \int_{\T_\ell} \big| \partial_y \psi \big|,
$$
and deduce from the Cauchy-Schwarz inequality that
\begin{equation}
\label{eq:sesame}
\Big| e^{i \varphi} - \frac{\widehat{\psi}}{|\widehat{\psi}|} \Big| \leq \sqrt{2} \, \ell \, \kappa(\psi)^\frac{1}{2}.
\end{equation}
At this stage, we invoke the inequality
\begin{equation}
\label{eq:pignon}
\big| \textup{arg}(z_2) - \textup{arg}(z_1) \big| \leq \frac{2}{\pi} \big| z_1 - z_2 \big|,
\end{equation}
which holds for complex numbers $z_1$ and $z_2$ of modulus $1$, such that $|z_2 - z_1| \leq 1$. Note that the numbers $\textup{arg}(z_2)$ and $\textup{arg}(z_1)$ in this inequality are defined in $\R / 2 \pi \Z$, so that the notation $| \cdot |$ in the left-hand side of~\eqref{eq:pignon} corresponds to the natural distance on this set.

Going back to~\eqref{eq:sesame}, we finally fix $C_\ell := \max \{ 2 + \ell/(\sqrt{2} \pi), (2 \sqrt{2} \ell)/\pi \}$, so that $|e^{i \varphi} - \widehat{\psi}/|\widehat{\psi}|| \leq 1$, when $\kappa(\psi) \leq \kappa_\ell := \min \{ 1/(4 \ell^2), 1/16, 1/(4 C_\ell)^2 \}$. In this case, we infer from~\eqref{eq:sesame} and~\eqref{eq:pignon} that
$$
\Big| \varphi - \textup{arg} \big( \widehat{\psi} \big) \Big| \leq \frac{2 \, \sqrt{2} \, \ell}{\pi} \, \kappa(\psi)^\frac{1}{2} \leq C_\ell \, \kappa(\psi)^\frac{1}{2} \leq \frac{1}{2}.
$$
Invoking the continuity of the phase function $\varphi$, we can find a number $\tau \in \R$ such that $\textup{arg}(\widehat{\psi}) = \tau$ modulo $2 \pi$, and 
$$
\big| \varphi(x) - \tau \big| \leq C_\ell \, \kappa(\psi)^\frac{1}{2},
$$
for any $x \in \T_\ell$. Integrating this inequality on $\T_\ell$, we conclude that
$$
\bigg| \frac{1}{\ell} \int_{\T_\ell} \varphi - \tau \bigg| \leq C_\ell \, \kappa(\psi)^\frac{1}{2}.
$$
Since $\tau = \textup{arg}(\widehat{\psi})$ modulo $2 \pi$, this completes the proof of~\eqref{eq:thon}, as well as of Step~\ref{S2}.

Given a number $R \geq 2$, we now connect two functions $\psi_\pm \in H^1(\T_\ell)$ satisfying~\eqref{eq:fine} by a function $\Psi \in H^1([- R, R] \times \T_\ell, \C)$ of Ginzburg-Landau energy of order $\kappa(\psi_-) + \kappa(\psi_+) +1/R$. More precisely, we show

\begin{step}
\label{S3}
Let $\psi_\pm \in H^1(\T_\ell)$ satisfying~\eqref{eq:fine}. Given a number $R \geq 2$, there exists a function $\Psi \in H^1([-R, R] \times \T_\ell, \C)$ such that $\Psi(\pm R, \cdot) = \psi_\pm$, and moreover,
\begin{equation}
\label{eq:berne}
E_R \big( \Psi \big) \leq C_\ell \Big( \kappa(\psi_-) + \kappa(\psi_+) + \frac{1}{R} \Big), \quad \text{ and } \quad \Big| [P_R] \big( \Psi \big) \Big| \leq C_\ell \Big( \kappa(\psi_-) + \kappa(\psi_+) \Big)^\frac{1}{2}, 
\end{equation}
for a number $C_\ell \geq 0$, depending only on $\ell$.
\end{step}

Under condition~\eqref{eq:fine}, it follows from Step~\ref{S2} that the functions $\psi_\pm$ do not vanish. In particular, we can find continuous, $\ell$-periodic, real-valued functions $\rho_\pm$ and $\varphi_\pm$ such that $\psi_\pm = \rho_\pm \, e^{i \varphi_\pm}$ on $\T_\ell$. Using this notation, we observe that
$$
\kappa(\psi_\pm) = \frac{1}{2 \ell} \int_{\T_\ell} \big( (\partial_y \rho_\pm)^2 + \rho_\pm^2 (\partial_y \varphi_\pm)^2 \big) + \frac{1}{4 \ell} \int_{\T_\ell} \big( 1 - \rho_\pm^2 \big)^2.
$$
Setting
$$
\widehat{\varphi}_\pm = \frac{1}{\ell} \int_{\T_\ell} \varphi_\pm(y) \, dy,
$$
we first derive from the Sobolev embedding theorem and the Poincar\'e-Wirtinger inequality the existence of a number $C_\ell$, depending only on $\ell$, such that
\begin{equation}
\label{eq:emmental}
\big\| 1 - \rho_\pm \big\|_{L^\infty(\T_\ell)}^2 \leq C_\ell \, \kappa(\psi_\pm), \quad \text{ and } \quad \big\| \varphi_\pm - \widehat{\varphi}_\pm \big\|_{L^2(\T_\ell)} \leq \frac{\ell}{2 \pi} \, \big\| \partial_y \varphi_\pm \big\|_{L^2(\T_\ell)}.
\end{equation}
Going back to~\eqref{eq:fine}, we note that the modulus $\rho_\pm$ are uniformly bounded by a number $C_\ell$, depending only on $\ell$.

We next define the map $\Psi$ by introducing affine interpolations of the modulus $\rho_\pm$ and the phases $\varphi_\pm$. More precisely, we set
\begin{equation}
\label{def:rho}
\rho(x, y) = \begin{cases} 1, \quad \text{ for } |x| \leq R - 1 \text{ and } y \in \T_\ell, \\ 1 + \big( R - 1 - |x| \big) \big( 1 - \rho_\pm(y) \big), \quad \text{ for } R - 1 \leq \pm \, x \leq R \text{ and } y \in \T_\ell. \end{cases}
\end{equation}
Introducing the integer $k \in \Z$ such that $2 k \pi \leq \widehat{\varphi}_+ - \widehat{\varphi}_- < 2 (k + 1) \pi$, we also set
\begin{equation}
\label{def:varphi}
\varphi(x, y) = \begin{cases} \frac{\widehat{\varphi}_+ - \widehat{\varphi}_- - 2 k \pi}{2 (R - 1)} \, x + \frac{\widehat{\varphi}_+ + \widehat{\varphi}_- + 2 k \pi}{2}, \quad \text{ for } |x| \leq R - 1 \text{ and } y \in \T_\ell, \\ \varphi_\pm(y) + \big( R - |x| \big) \big( \widehat{\varphi_\pm} - \varphi_\pm(y) \big), \quad \text{ for } R - 1 \leq \pm \, x \leq R \text{ and } y \in \T_\ell. \end{cases} 
\end{equation}
The map $\Psi := \rho \, e^{i \varphi}$ then lies in $H^1([-R, R] \times \T_\ell)$, with $\Psi(\pm R, \cdot) = \psi_\pm$. From~\eqref{eq:emmental} and explicit computations, we first derive the estimate for $E_R(\Psi)$ in~\eqref{eq:berne}.

Concerning the estimate for the momentum $[P_R](\Psi)$, we write $\Psi =
\widehat{\Psi} + W$ according to~\eqref{eq:decomp1d}, and recall from~\eqref{eq:mayo} that the mean values $\widehat{\Psi}(\pm R) = \widehat{\psi}_\pm$ are larger than $1/2$. In view of~\eqref{def:P_R}, the momentum $[P_R](\Psi)$ is then well-defined by
$$
[P_R] \big( \Psi \big) = \frac{1}{2} \int_{- R}^R \bigg( \big\langle i \partial_x \widehat{\Psi}, \widehat{\Psi} \big\rangle_\C + \frac{1}{\ell} \int_{\T_\ell} \big\langle i \partial_x W, W \big\rangle_\C \bigg) + \frac{1}{2} \textup{arg}\big( \widehat{\Psi}(R) \big) - \frac{1}{2} \textup{arg} \big( \widehat{\Psi}(- R) \big) \quad \text{ modulo } \pi.
$$
Recall that $\widehat{\Psi}$ only depends on $x$ and that $\int_{\T_\ell} W(x,y)
\, dy = 0$ for all $x \in \R.$ By definition of the function $\Psi$, we can
therefore rewrite the previous formula as
$$
[P_R] \big( \Psi \big) = \frac{1}{2 \ell} \int_{- R}^R \int_{\T_\ell} \big\langle i \partial_x \Psi, \Psi \big\rangle_\C + \frac{1}{2} \textup{arg} \big( \widehat{\psi}_+ \big) - \frac{1}{2} \textup{arg} \big( \widehat{\psi}_- \big) \quad \text{ modulo } \pi.
$$
Since $\langle i \partial_x \Psi, \Psi \rangle_\C = - \rho^2 \, \partial_x \varphi$, we are led to 
\begin{align*}
[P_R] \big( \Psi \big) = & \frac{1}{2 \ell} \int_{- R}^R \int_{\T_\ell} \big( 1 - \rho^2 \big) \, \partial_x \varphi + \frac{1}{2} \bigg( \textup{arg} \big( \widehat{\psi}_+ \big) - \frac{1}{\ell} \int_{\T_\ell} \varphi_+(y) \, dy \bigg) \\
& - \frac{1}{2} \bigg( \textup{arg} \big( \widehat{\psi}_- \big) - \frac{1}{\ell} \int_{\T_\ell} \varphi_-(y) \, dy \bigg) \quad \text{ modulo } \pi.
\end{align*}
In view of~\eqref{eq:emmental},~\eqref{def:rho} and~\eqref{def:varphi}, we can find further numbers $C_\ell > 0$, depending only on $\ell$, such that the first term in the right-hand side of this inequality satisfies
$$
\bigg| \frac{1}{2 \ell} \int_{- R}^R \int_{\T_\ell} \big( 1 - \rho^2 \big) \, \partial_x \varphi \bigg| \leq C_\ell \int_{\T_\ell} \Big( \big| 1 - \rho_+^2 \big| \, \big| \varphi_+ - \widehat{\varphi}_+ \big| + \big| 1 - \rho_-^2 \big| \, \big| \varphi_- - \widehat{\varphi}_- \big| \Big) \leq C_\ell \Big( \kappa(\psi_+) + \kappa(\psi_-) \Big).
$$
Concerning the second and third terms, we directly invoke~\eqref{eq:thon} to obtain
$$
\bigg| \textup{arg} \big( \widehat{\psi}_\pm \big) - \frac{1}{\ell} \int_{\T_\ell} \varphi_\pm(y) \, dy \bigg| \leq C_\ell \kappa(\psi_\pm)^\frac{1}{2}.
$$
Gathering the two previous estimates, we conclude that
$$
\Big| [P_R] \big( \Psi \big) \Big| \leq C_\ell \Big( \kappa(\psi_+) + \kappa(\psi_-) \Big)^\frac{1}{2},
$$
due to the bounds $\kappa(\psi_+) \leq \kappa_\ell$ and $\kappa(\psi_-) \leq \kappa_\ell$. This is exactly the second inequality in~\eqref{eq:berne}.

We now collect the three previous steps in order to complete the proof of Lemma~\ref{lem:vaud}.

\begin{step}
\label{S4}
Conclusion.
\end{step}

Fix an integer $m \geq 1$. In view of Step~\ref{S1}, we can find a number $R_m \leq \widetilde{R}_m \leq 2 R_m$, such that~\eqref{eq:tranche} holds for some number $M_p$. Up to a subsequence, we can assume that $M_p/R_m \leq \kappa_\ell$. By~\eqref{eq:tranche}, the maps $\psi_\pm := \psi_{n_m}(\pm \widetilde{R}_m, \cdot)$ then satisfy the condition in~\eqref{eq:fine}. Observing that $\widetilde{R}_m \geq R_m \geq 2$, we can invoke Step~\ref{S3} with $R = \widetilde{R}_m$ to find a function $\widetilde{\psi}_{n_m} := \Psi \in H^1([- \widetilde{R}_m, \widetilde{R}_m] \times \T_\ell, \C)$ such that
$$
\widetilde{\psi}_{n_m} \big( \pm \widetilde{R}_m, \cdot \big) = \psi_{n_m} \big( \pm \widetilde{R}_m, \cdot \big),
$$
$$
E_{\widetilde{R}_m} \big( \widetilde{\psi}_{n_m} \big) \leq C_\ell \bigg( \kappa \Big( \psi_{n_m}(\widetilde{R}_m, \cdot) \Big) + \kappa \Big( \psi_{n_m}(- \widetilde{R}_m, \cdot) \Big) + \frac{1}{\widetilde{R}_m} \bigg),
$$
and 
$$
\Big| [P_{\widetilde{R}_m}] \big( \widetilde{\psi}_{n_m} \big) \Big| \leq C_\ell \Big( \kappa \big( \psi_{n_m}(\widetilde{R}_m, \cdot) \big) + \kappa \big( \psi_{n_m}(- \widetilde{R}_m, \cdot) \big) \Big)^\frac{1}{2}.
$$
The two estimates in Lemma~\ref{lem:vaud} then follow from~\eqref{eq:tranche} and the fact that $\widetilde{R}_m \geq R_m$. This concludes the proof of this lemma. \qed

\subsection{Proof of Proposition~\ref{prop:tessin}}

For a fixed integer $m$, we decompose the energy $E(\psi_{n_m})$ as
$$
E \big( \psi_{n_m} \big) = E_{\widetilde{R}_m} \big( \psi_{n_m} \big) + E_{\widetilde{R}_m^c} \big( \psi_{n_m} \big).
$$
In view of~\eqref{eq:massezero},~\eqref{eq:uri} and~\eqref{eq:grisons}, we have on the one hand,
$$
E_{\widetilde{R}_m} \big( \psi_{n_m} \big) \geq E_{R_m} \big( \psi_{n_m} \big) \geq \max \Big\{ \delta_\infty, E_{R_m} \big( \psi_\infty \big) - \frac{1}{m} \Big\} \geq \max \Big\{ \delta_\infty, E \big( \psi_\infty \big) - \frac{2}{m} \Big\}.
$$
On the other hand, we infer from Lemma~\ref{lem:vaud} that
$$
E_{\widetilde{R}_m^c} \big( \psi_{n_m} \big) = E_{\widetilde{R}_m^c} \big( \widetilde{\psi}_{n_m} \big) \geq E \big( \widetilde{\psi}_{n_m} \big) - \frac{C_\ell}{R_m},
$$
where $C_\ell$ is the number in Lemma~\ref{lem:vaud}. Since $R_m \to + \infty$ as $m \to \infty$, inequality~\eqref{eq:tessin0} follows by summation.

We argue similarly for the momentum, which we decompose as
$$
[P] \big( \psi_{n_m} \big) = [P_{\widetilde{R}_m}] \big( \psi_{n_m} \big) + [P_{\widetilde{R}_m^c}] \big( \psi_{n_m} \big).
$$
Using~\eqref{eq:uri} and~\eqref{eq:grisons}, we obtain
$$
\big| [P_{\widetilde{R}_m}] \big( \psi_{n_m} \big) - [P] \big( \psi_\infty \big) \big| \leq \big| [P_{\widetilde{R}_m}] \big( \psi_{n_m} \big) - [P_{\widetilde{R}_m}] \big( \psi_\infty \big) \big| + \big| [P_{\widetilde{R}_m^c}] \big( \psi_\infty \big) \big| \leq \frac{2}{m},
$$
while, by Lemma~\ref{lem:vaud},
$$
\big| [P_{\widetilde{R}_m^c}] \big( \psi_{n_m} \big) - [P] \big( \widetilde{\psi}_{n_m} \big) \big| \leq \big| [P_{\widetilde{R}_m}] \big( \widetilde{\psi}_{n_m} \big) \big| \leq \frac{C_\ell}{R_m^\frac{1}{2}}.
$$
Inequality~\eqref{eq:tessin1} follows as well by summation. This completes the proof of Proposition~\ref{prop:tessin}. \qed

\appendix
\section{Properties of the minimizing energy in one space dimension}
\label{sec:min-1d}

In this first appendix, we recall properties of the minimizing energy $\boI_\textup{1d}$, which are useful for the proofs of Lemmas~\ref{lem:cond-concave} and~\ref{lem:not-1d}.

\begin{lem}
\label{lem:prop-gI}
The minimizing energy $\boI_\textup{1d}$ is a $\pi$-periodic and even function on $\R$ given by
$$
\boI_\textup{1d}(q) = \frac{1}{3} \big( 2 - \gc_q^2 \big)^\frac{3}{2},
$$
for any number $q \in \R$. The speed $\gc_q$ in this formula is the unique number in $(- \sqrt{2}, \sqrt{2}]$ such that
$$
[q] = \Xi(\gc_q) := \frac{\pi}{2} - \arctan \bigg( \frac{\gc_q}{\sqrt{2 - \gc_q^2}} \bigg) - \frac{\gc_q}{2} \sqrt{2 - \gc_q^2} \quad \text{ modulo } \pi.
$$
In particular, $\boI_\textup{1d}$ is $\sqrt{2}$-Lipschitz on $\R$, and concave on the interval $[0, \pi]$.
\end{lem}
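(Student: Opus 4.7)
The plan is to start from the known variational characterization of the dark solitons. From~\cite{BetGrSa2, BeGrSaS1}, we know that the infimum in~\eqref{def:gI} is attained, and that (up to translation and a constant phase shift) the minimizers are exactly the dark solitons $\gu_c$ defined by~\eqref{eq:gu-val}, with the speed $c$ uniquely determined by the relation $[P](\gu_c) = [q]$. The remainder of the proof then reduces to two explicit computations on $\gu_c$, followed by elementary calculus.

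First I would compute $E(\gu_c)$. Setting $\alpha = \sqrt{2-c^2}/2$, a direct computation gives $1 - |\gu_c(x)|^2 = \tfrac{2-c^2}{2}\,\mathrm{sech}^2(\alpha x)$ and $|\partial_x \gu_c(x)|^2 = \tfrac{(2-c^2)^2}{8}\,\mathrm{sech}^4(\alpha x)$, so that $e(\gu_c)(x) = \tfrac{(2-c^2)^2}{8}\,\mathrm{sech}^4(\alpha x)$. The identity $\int_\R \mathrm{sech}^4 u\, du = 4/3$ then yields $E(\gu_c) = \tfrac{1}{3}(2-c^2)^{3/2}$. Next I would compute $[P](\gu_c)$. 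Writing $\gu_c = a + ib$ with $b = c/\sqrt{2}$ constant, the local density reduces to $\langle i\partial_x \gu_c, \gu_c\rangle_\C = b\,\partial_x a$, and a telescoping integration combined with the asymptotics of $\tanh$ gives $\int_{-R}^R \langle i\partial_x \gu_c, \gu_c\rangle_\C \to c\sqrt{2-c^2}$. The arguments of $\gu_c(\pm R)$ converge respectively to $\arctan(c/\sqrt{2-c^2})$ and $\pi - \arctan(c/\sqrt{2-c^2})$ (by locating the two limits in the right and left half-planes), and the definition of the untwisted momentum then produces $[P](\gu_c) = \Xi(c)$ modulo $\pi$ after collecting terms and using the even/periodic structure on $\R/\pi\Z$.

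For the bijection property, I would verify that $\Xi$ is strictly monotone on $(-\sqrt{2}, \sqrt{2}]$ by a direct differentiation: a short chain-rule computation (using $\frac{d}{dc}\arctan(c/\sqrt{2-c^2}) = 1/\sqrt{2-c^2}$) gives $\Xi'(c) = -\sqrt{2-c^2}$, which is strictly negative on $(-\sqrt{2}, \sqrt{2})$. Since $\Xi(\sqrt{2}) = 0$ and $\Xi(-\sqrt{2}) = \pi$, one obtains a bijection $(-\sqrt{2}, \sqrt{2}] \to \R/\pi\Z$, which makes $\gc_q$ well-defined. The $\pi$-periodicity and evenness of $\boI_\textup{1d}$ are then immediate from those of $[P]$ on $\R/\pi\Z$. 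For the Lipschitz estimate, differentiating $\boI_\textup{1d}(q) = \tfrac{1}{3}(2-\gc_q^2)^{3/2}$ in $c$ gives $dE/dc = -c\sqrt{2-c^2}$, and combining with $\Xi'(c) = -\sqrt{2-c^2}$ yields $\boI_\textup{1d}'(q) = \gc_q$, whence $|\boI_\textup{1d}'(q)| \leq \sqrt{2}$. Finally, concavity on $[0,\pi]$ follows because $q \mapsto \gc_q$ is strictly decreasing there, so $\boI_\textup{1d}'$ is non-increasing. No genuine difficulty is anticipated; the only delicate point is the modulo-$\pi$ bookkeeping when identifying $[P](\gu_c)$ with $\Xi(c)$.
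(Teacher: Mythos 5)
Your proposal is correct and follows essentially the same route as the paper's proof: both reduce the lemma to the known variational characterization of $\boI_\textup{1d}$ by the dark solitons $\gu_c$, and then obtain the Lipschitz bound and the concavity from the identities $\Xi'(c) = - \sqrt{2 - c^2}$ and $\boI_\textup{1d}'(q) = \gc_q$. The only difference is that you carry out the explicit computations of $E(\gu_c)$ and $[P](\gu_c)$ (which check out), whereas the paper cites them from~\cite{deLGrSm1, BetGrSa2, BeGrSaS1}; the modulo-$\pi$ and sign bookkeeping you flag is harmless, since the final formula depends only on $\gc_q^2$ and the derivative identity is insensitive to the sign convention.
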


\begin{proof}
Note first that the minimizing energy $\boI_\textup{1d}$ is by definition a $\pi$-periodic function on $\R$. It is also even due to the property that $[P](\overline{\psi}) = - [P](\overline{\psi})$ for any function $\psi \in X(\R)$. When $0 \leq q < \pi/2$, the formula for $\boI_\textup{1d}(q)$ was derived in~\cite[Proposition A.6]{deLGrSm1} from~\cite{BetGrSa2}. It extends to $q = \pi/2$ by~\cite[Lemma 6]{BeGrSaS1}. Since $\Xi(- c) = \pi - \Xi(c) = - \Xi(c)$ modulo $\pi$, it also extends to $[- \pi/2, 0]$ by parity, and then to $\R$ by periodicity.

Observe next that
$$
\Xi'(c) = - (2 - c^2)^\frac{1}{2},
$$
for $- \sqrt{2} < c < \sqrt{2}$. As a consequence, the function $\Xi$ is a smooth diffeomorphism from $(- \sqrt{2}, \sqrt{2})$ to $(0, \pi)$, that extends into a continuous function from $[- \sqrt{2}, \sqrt{2}]$ to $[0, \pi]$. In particular, the map $q \to \gc_q$ is well-defined and continuous on $[0, \pi]$, and smooth from $(0, \pi)$ to $(- \sqrt{2}, \sqrt{2})$, with
\begin{equation}
\label{eq:deriv-gc}
\frac{d \gc_q}{dq} = - \frac{1}{(2 - \gc_q^2)^\frac{1}{2}}.
\end{equation}
Therefore, the function $\boI_\textup{1d}$ is also continuous on $[0, \pi]$ and smooth on $(0, \pi)$. Moreover, we check that
\begin{equation}
\label{eq:deriv-gI}
\boI_\textup{1d}'(q) = \gc_q.
\end{equation}
Since $|\gc_q| < \sqrt{2}$ for any $0 < q < \pi$, the function $\boI_\textup{1d}$ is $\sqrt{2}$-Lipschitz on the interval $(0, \pi)$. By continuity, this property extends to $[0, \pi]$, and again by periodicity, to $\R$. 

Going back to~\eqref{eq:deriv-gc}, we next notice that the map $q \to \gc_q$ is decreasing on $(0, \pi)$. In view of~\eqref{eq:deriv-gI}, so is the derivative $\boI_\textup{1d}'$. This guarantees the concavity of the function $\boI_\textup{1d}$ on the interval $[0, \pi]$, and completes the proof of Lemma~\ref{lem:prop-gI}. 
\end{proof}

\section{Useful characterizations of concavity and strict sub-additivity}
\label{sec:concave}

In this second appendix, we first give the

\begin{proof}[Proof of Lemma~\ref{lem:concave}]
We assume for the sake of a contradiction that the function $f$ is not concave on $[a, b]$. We are then able to find three numbers $a \leq \alpha < \beta \leq b$ and $0 < \mu < 1$ such that
$$
f(x_\mu) < \mu f(\alpha) + (1 - \mu) f(\beta),
$$
with $x_\mu = \mu \alpha + (1 - \mu) \beta$. We consider the straight line $\Delta$ going through the points $(\alpha, f(\alpha))$ and $(\beta, f(\beta))$, whose equation is explicitly given by $y = \tau x + \sigma$, with $\tau = (f(\beta) - f(\alpha))/(\beta - \alpha)$ and $\sigma = (\beta f(\alpha) - \alpha f(\beta))/(\beta - \alpha)$. We can rewrite the previous inequality as the fact that the straight line $\Delta_\mu$, which is parallel to $\Delta$ and goes through the point $(x_\mu, f(x_\mu))$, is strictly below the line $\Delta$. Indeed, we check that
\begin{equation}
\label{don}
f(x_\mu) - \tau x_\mu = f(x_\mu) - \beta \frac{f(\beta) - f(\alpha)}{\beta - \alpha} + \mu \big( f(\beta) - f(\alpha) \big) < f(\beta) - \beta \frac{f(\beta) - f(\alpha)}{\beta - \alpha} = \sigma.
\end{equation}
As a consequence, we can introduce the straight line $\Delta_*$, which contains at least one point $(x, f(x))$ for a number $\alpha \leq x \leq \beta$, and which is, among all the parallel lines to $\Delta$, the most below this line. The line $\Delta_*$ is given by the equation $y = \tau x + \sigma_*$, in which
$$
\sigma_* = \min \big\{ f(x) - \tau x, x \in [\alpha, \beta] \big\}.
$$
This number is well-defined by continuity of the function $f$ and it is strictly less than $\sigma$ by~\eqref{don}. We finally introduce the number
$$
x_* = \min \big\{ x \in [\alpha, \beta] \text{ s.t. } f(x) = \tau x + \sigma_* \big\}.
$$
Again by continuity of the function $f$, as well as by definition of the number $\sigma_*$, the number $x_*$ is well-defined. Moreover, it lies in the interval $(\alpha, \beta)$ due to the fact that $\sigma_* < \sigma$. Therefore, we can deduce from~\eqref{eq:cond-concave} the existence of a number $\delta_*$ such that
$$
\frac{1}{2} \Big( f(x_* + \delta_*) + f(x_* - \delta_*) \Big) \leq f(x_*),
$$
where we can assume that $\alpha \leq x_* - \delta_* < x_* + \delta_* \leq \beta$. In this case, it follows from the definition of the number $x_*$ that
$$
f(x_*) = \tau x_* + \sigma_*, \quad \text{ and } \quad f(x_* - \delta_*) > \tau (x_* - \delta_*) + \sigma_*,
$$
so that
$$
f(x_* + \delta_*) < \tau (x + \delta_*) + \sigma_*.
$$
Since $\alpha \leq x_* + \delta_* \leq \beta$, this is a contradiction with the definition of the number $\sigma_*$. Hence the concavity of the function $f$ on $[a , b]$ is proved.
\end{proof}

We next provide the

\begin{proof}[Proof of Lemma~\ref{lem:strict-subadditivity}]
Using the concavity of the function $f$ on $[0, R]$, and the fact that $f(0) = 0$, we already know that this function is sub-additive on $[0, R]$. Recall indeed that
\begin{equation}
\label{eq:conc-sub-add}
x \, f(y) \geq y \, f(x) + (x - y) \, f(0) = y \, f(x),
\end{equation}
for any numbers $0 \leq y \leq x \leq R$, so that
\begin{equation}
\label{eq:sub-add}
f(x_1) + f(x_2) \geq f(x_1 + x_2), 
\end{equation}
for any numbers $0 \leq x_1, x_2 \leq R$, with $0 \leq x_1 + x_2 \leq R$.

We now assume that the function $f$ is not strictly sub-additive on $[0, R]$. In this situation, there exist numbers $0 < x_1, x_2 \leq R$, with $x_* := x_1 + x_2 \in (0, R]$, such that the previous inequality is an equality. We claim that
\begin{equation}
\label{eq:affine}
f(y) = \frac{y}{x_*} \, f(x_*),
\end{equation}
for any $0 \leq y \leq x_*$. This identity already holds for $y = 0$ and $y = x_*$. Moreover, it follows from~\eqref{eq:conc-sub-add} that the case of equality into~\eqref{eq:sub-add} can only hold if and only if
$$
f(x_1) = \frac{x_1}{x_*} \, f(x_*) \quad \Big(\text{and } \quad f(x_2) = \frac{x_2}{x_*} \, f(x_*) \Big). 
$$
When $x_1 < y < x_*$, we again deduce from the concavity of the function $f$ that
$$
\frac{y}{x_*} \, f(x_*) = \frac{y}{x_1} \, f(x_1) \geq f(y) + \frac{y - x_1}{x_1} \, f(0) = f(y),
$$
and~\eqref{eq:affine} follows from the already proved reverse inequality~\eqref{eq:conc-sub-add}. Similarly, when $0 < y < x_1$, we have
$$
\frac{x_1}{x_*} f(x_*) = f(x_1) \geq \frac{x_* - x_1}{x_* - y} f(y) + \frac{x_1 - y}{x_* - y} f(x_*),
$$
which amounts to the inequality
$$
\frac{y}{x_*} \, f(x_*) \geq f(y).
$$
We are again led to~\eqref{eq:affine} by the reverse inequality~\eqref{eq:conc-sub-add}. This completes the proof of~\eqref{eq:affine}.

It is then enough to set $\mu = f(x_*)/x_*$ to obtain~\eqref{eq:linear}. Since the function $f$ is non-negative, the number $\mu$ is also non-negative. This concludes the proof of the alternative in Lemma~\ref{lem:strict-subadditivity}.
\end{proof}

\begin{merci}
P. Gravejat is very grateful to Laboratory Jacques-Louis Lions (UMR CNRS 7598) in Sorbonne University for its warm hospitality during the completion of this work. 

The authors acknowledge support from the project ``Dispersive and random waves'' (ANR-18-CE40-0020-01) of the Agence Nationale de la Recherche. A.~de Laire was also supported by the Labex CEMPI (ANR-11-LABX-0007-01), and P.~Gravejat, by the CY Initiative of Excellence (Grant ``Investissements d'Avenir'' ANR-16-IDEX-0008).
\end{merci}

\bibliographystyle{plain}

\end{document}